\documentclass{article}       
\usepackage{geometry}
\geometry{
  left=2.5cm,
  right=2.5cm,
  top=2cm,
  bottom=4cm,
  bindingoffset=5mm
}

\usepackage{fancyhdr}
\pagestyle{fancy}
\fancyhf{}

\cfoot{\thepage}

\usepackage{graphicx}
\graphicspath{{images/}}

\usepackage{amsmath,amssymb,amsthm}
\usepackage{enumerate}

\usepackage{hyperref}

\usepackage{color}
\newcommand{\colblue}[1]{{#1}}

\newcommand{\Lcal}{\mathcal{L}}
\newcommand{\Gcal}{\mathcal{G}}

\newcommand{\diverg}{\mathrm{div}}

\newcommand{\N}{\mathbb{N}}
\newcommand{\R}{\mathbb{R}}
\newcommand{\Rb}{\bar{\mathbb{R}}}

\newcommand{\prox}{\mathrm{prox}}
\newcommand{\dom}{\mathrm{dom} \hspace{1pt}}

\newcommand{\proj}{\mathrm{proj}}
\newcommand{\diam}{\mathrm{diam}}

\newcommand{\X}{\mathcal{X}}
\newcommand{\Y}{\mathcal{Y}}
\newcommand{\Z}{\mathcal{Z}}

\newcommand{\xt}{\tilde{x}}
\newcommand{\yt}{\tilde{y}}

\newcommand{\xh}{\hat{x}}
\newcommand{\yh}{\hat{y}}
\newcommand{\zh}{\hat{z}}


\newcommand{\yb}{\bar{y}}
\newcommand{\xb}{\bar{x}}

\newcommand{\xs}{x^\star}
\newcommand{\ys}{y^\star}

\newcommand{\xc}{\check{x}}
\newcommand{\yc}{\check{y}}

\newcommand{\e}{\varepsilon}

\newcommand{\dx}{\mathrm{d}x}

\newcommand{\Bigo}[1]{O\left(#1\right)}

\newtheorem{theorem}{Theorem}[section]
\newtheorem{definition}[theorem]{Definition}
\newtheorem{proposition}[theorem]{Proposition}
\newtheorem{lemma}[theorem]{Lemma}
\newtheorem{corollary}[theorem]{Corollary}
\newtheorem{remark}[theorem]{Remark}

\numberwithin{equation}{section}

\begin{document}

\title{Inexact First-Order Primal-Dual Algorithms}
\author{Julian Rasch\thanks{Applied Mathematics M\"unster: Institute for Analysis and Computational Mathematics, 
Westf\"alische Wilhelms-Universit\"at (WWU) M\"unster, Germany (julian.rasch@wwu.de).},
Antonin Chambolle\thanks{CMAP, \'Ecole Polytechnique, CNRS, France (antonin.chambolle@cmap.polytechnique.fr).}}
\maketitle

\begin{abstract}
We investigate the convergence of a recently popular class of first-order primal-dual algorithms for saddle point problems under the presence of errors in the proximal maps and gradients.
We study several types of errors and show that, provided a sufficient decay of these errors, the same convergence rates as for the error-free algorithm can be established. 
More precisely, we prove the (optimal) $\Bigo{1/N}$ convergence to a saddle point in finite dimensions for the class of non-smooth problems considered in this paper, and prove a $\Bigo{1/N^2}$ or even linear $\Bigo{\theta^N}$ convergence rate if either the primal or dual objective respectively both are strongly convex.
Moreover we show that also under a slower decay of errors we can establish rates, however slower and directly depending on the decay of the errors.
We demonstrate the performance and practical use of the algorithms on the example of nested algorithms and show how they can be used to split the global objective more efficiently. 
\end{abstract}


\section{Introduction}\label{sec:intro}
The numerical solution of nonsmooth optimization problems and the acceleration of their convergence has been regarded a fundamental issue in the past ten to twenty years.
This is mainly due to the development of image reconstruction and processing, data science and machine learning which require to solve large and highly nonlinear minimization problems.
Two of the most popular approaches are forward-backward splittings \cite{Lions1979,Combettes2005,Combettes2011}, in particular the FISTA method \cite{Beck2009,Beck2009b}, and first-order primal-dual methods, first introduced in \cite{Pock2009,Esser2010} and further studied in \cite{Chambolle2011,Chambolle2016a}. 
The common thread of all these methods is that they split the global objective into many elementary bricks which, individually, may be ``easy'' to optimize. 

In their original version, all the above mentioned approaches require that the mathematical operations necessary in every step of the respective algorithms can be executed without errors.
However, one might stumble over situations in which these operations can only be performed up to a certain precision, e.g. due to an erroneous computation of a gradient or due to the application of a proximal operator lacking a closed-form solution.
Examples where this problem arises are TV-type regularized inverse problems \cite{Beck2009b,Barbero2011,Fadili2011,Sawatzky2013,Ehrhardt2016} or low-rank minimization and matrix completion \cite{Cai2010,Ma2011}.
To address this issue, there has been a lot of work investigating the convergence of proximal point methods \cite{Rockafellar1976,Lemaire1992,Gueler1992,Cominetti1997,Salzo2012,He2012}, where the latter two also prove rates, and proximal forward-backward splittings \cite{Combettes2005} under the presence of errors. 
The objectives of these publications reach from general convergence proofs \cite{Luo1993,Patriksson1997,Combettes2004,Zaslavski2010,Friedlander2012} and convergence up to some accuracy level \cite{Nedic2001,Aspremont2008,Devolder2014} to convergence rates in dependence of the errors \cite{Schmidt2011,Villa2013,Aujol2015} also for the accelerated versions including the FISTA method.
The recent paper \cite{Bonettini2016} follows a similar approach to \cite{Villa2013}, however extending also to nonconvex problems using variable metric strategies and linesearch.

For the recently popular class of first-order primal-dual algorithms mentioned above the list remains short: 
to the best of our knowledge the only work which considers inaccuracies in the proximal operators for this class of algorithms is the one of Condat \cite{Condat2013}, who explicitly models errors and proves convergence under mild assumptions on the decay of the errors. 
However, he does not show any convergence rates.
\colblue{We must also mention Nemirovski's approach in~\cite{Nemirovski2004} which is
an extension of the extragradient method. This saddle-point optimization algorithm converges with an optimal $O(1/N)$ convergence rate as soon as a particular inequality is satisfied at each iteration, possibly with a controlled error term (\textit{cf.}~Prop~2.2 in~\cite{Nemirovski2004}).}

The goal of this paper is twofold: 
Most importantly, we investigate the convergence of the primal-dual algorithms presented in \cite{Chambolle2011,Chambolle2016a} for problems of the form 
\begin{align*}
 \min_{x \in \X} \max_{y \in \Y} ~ \langle Kx, y \rangle + f(x) + g(x) - h^*(y),
\end{align*}
for convex and lower semicontinuous $g$ and $h$ and convex and Lipschitz differentiable $f$, 
with errors occurring in the computation of $\nabla f$ and the proximal operators for $g$ and $h^*$.
Following the line of the preceding works on forward-backward splittings, we consider the different notions of inexact proximal points used in \cite{Schmidt2011} and extended in \cite{Salzo2012,Villa2013,Aujol2015} and, assuming an appropriate decay of the errors, establish the convergence rates of \cite{Chambolle2011,Chambolle2016a} also for perturbed algorithms.
More precisely, we prove the well-known $\Bigo{1/N}$ rate for the basic version, a $\Bigo{1/N^2}$ rate if either $f,g$ or $h^*$ are strongly convex, and a linear convergence rate in case both the primal and dual objective are strongly convex.
Moreover we show that also under a slower decay of errors we can establish rates, however unsurprisingly slower depending on the errors.

In the spirit of \cite{Villa2013} for inexact forward-backward algorithms, the second goal of this paper is to provide an interesting application for such inexact primal-dual algorithms.
We put the focus on situations where one takes the path of inexactness deliberately in order to split the global objective more efficiently.
A particular instance are problems of the form 
\begin{align}\label{eq:intro_application}
 \min_x ~ h(K_1x) + g(K_2x) = \min_x \max_y ~ \langle y,K_1 x\rangle + g(K_2x) - h^*(y).
\end{align}
A popular example is the TV-$L^1$ model with some imaging operator $K_1$, where $g$ and $h$ are chosen to be $L^1$-norms and $K_2 = \nabla$ is a gradient.
It has e.g been studied analytically by \cite{Alliney1992,Alliney1996,Alliney1997} and subsequently by \cite{Nikolova2002,Nikolova2004,Karkkainen2005,Chan2005}.
However, due to the nonlinearity and nondifferentiability of the involved terms, solutions of the model are numerically hard to compute.
One can find a variety of approaches to solve the TV-$L^1$ model, reaching from (smoothed) gradient descent \cite{Chan2005} over interior point methods \cite{Fu2006}, primal-dual methods using a semi-smooth Newton method \cite{Dong2009} to augmented Lagrangian methods \cite{Esser2010a,Wu2012}.
Interestingly, the inexact framework we propose in this paper provides a very simple and intuitive algorithmic approach to the solution of the TV-$L^1$ model.
More precisely, applying an inexact primal-dual algorithm to formulation \eqref{eq:intro_application}, we obtain a nested algorithm in the spirit of \cite{Beck2009b,Chaux2009,Barbero2011,Fadili2011,Sawatzky2013,Villa2013,Ehrhardt2016},
\begin{align*}
 y^{n+1} &= \prox_{\sigma h^*}(y^n + \sigma K_1(x^{n+1} + \theta(x^{n+1} - x^n))), \\
 x^{n+1} &= \prox_{\tau (g \circ K_2)}(x^n - \tau K_1^*y^{n+1}),
\end{align*}
where $\prox_{\sigma h^*}$ denotes the proximal map with respect to $h^*$ and step size $\sigma$ (cf. Section \ref{sec:inexact_proximum}).
It requires to solve the inner subproblem of the proximal step with respect to $g \circ K_2$, i.e. a TV-denoising problem, which does not have a closed-form solution but has to be solved numerically. 
It has been observed in \cite{Beck2009b} that, using this strategy on the primal TV-$L^2$ deblurring problem can cause the FISTA algorithm to diverge if the inner step is not executed with sufficient precision.  
As a remedy, the authors of \cite{Villa2013} demonstrated that the theoretical error bounds they established for inexact FISTA can also serve as a criterion for the necessary precision of the inner proximal problem and hence make the nested approach viable.
We show that the bounds for inexact primal-dual algorithms established in this paper can be used to make the nested approach viable for entirely non-differentiable problems such as the TV-$L^1$ model, while the results of \cite{Villa2013} for partly smooth objectives can also be obtained as a special case of the accelerated versions. 

In the context of inexact and nested algorithms it is worthwhile mentioning the very recent `Catalyst' framework \cite{Lin2015,Lin2017}, which uses nested inexact proximal point methods to accelerate a large class of generic optimization problems in the context of machine learning. 
The inexactness criterion applied there is the same as in \cite{Schmidt2011,Aujol2015}. 
Our approach, however, is much closer to \cite{Schmidt2011,Villa2013,Aujol2015}, stating convergence rates for perturbed algorithms, while \cite{Lin2015,Lin2017} focus entirely on nested algorithms, which we only consider as a particular instance of perturbed algorithms in the numerical experiments.

The remainder of the paper is organized as follows: 
In the next section we introduce the notions of inexact proxima that we will use throughout the paper.
In the following Section 3 we formulate inexact versions of the primal-dual algorithms presented in \cite{Chambolle2011} and \cite{Chambolle2016a} and prove their convergence including rates depending on the decay of the errors.
In the numerical Section 4 we apply the above splitting idea for nested algorithms to some well-known imaging problems and show how inexact primal-dual algorithms can be used to improve their convergence behavior.

\section{Inexact computations of the proximal point}\label{sec:inexact_proximum}
In this section we introduce and discuss the idea of the proximal point and several ways for its approximation. 
For a proper, convex and lower semicontinuous function $g \colon \X \to \Rb$ mapping from a Hilbert space $\X$ to the extended real line $\Rb = \R \cup \{ \infty \}$ and $y \in \X$ the proximal point  \cite{Moreau1965,Martinet1970,Rockafellar1976a,Rockafellar1976} is given by
\begin{align}\label{eq:proximal_operator}
 \prox_{\tau g} (y) = \arg \min_{x \in \X} \left\{ \frac{1}{2\tau} \|x - y\|^2 + g(x) \right\}.
\end{align}
Since $g$ is proper we directly obtain $\prox_{\tau g} (y) \in \dom g$.
The $1/\tau$-strongly convex mapping $\prox_{\tau g} \colon \X \to \X$ is called proximity operator of $\tau g$.
Letting 
\begin{align}\label{eq:proximity_function}
 G_\tau \colon \X \to \Rb, \quad x \mapsto \frac{1}{2\tau} \|x - y\|^2 + g(x)
\end{align}
be the proximity function, the first-order optimality condition for the proximum gives different characterizations of the proximal point: 
\begin{align}\label{eq:optimality_proximal_point}
 z = \prox_{\tau g} (y) \iff 0 \in \partial G_\tau(z) \iff \frac{y-z}{\tau} \in \partial
 g(z).
\end{align}
Based on these equivalences we introduce four different types of inexact computations of the proximal point, which are differently restrictive. 
Most have already been considered in literature and we give reference next to the definitions. 
We like to recommend \cite{Salzo2012,Villa2013} for some illustrations of the different notions in case of a projection.
We start with the first expression in \eqref{eq:optimality_proximal_point}. 
\begin{definition}\label{def:type0}
 Let $\e \geq 0$. 
 We say that $z \in \X$ is a type-0 approximation of the proximal point $\prox_{\tau g}(y)$ with precision $\e$ if 
 \begin{align*}
  z \approx_0^{\e} \prox_{\tau g}(y)  \overset{\mathrm{def}}{\iff} \| z - \prox_{\tau g}(y) \| \leq \sqrt{2 \tau \e}.
 \end{align*}
\end{definition}
This refers to choosing a proximal point from the $\sqrt{2\tau\e}$-ball around the true proximum.
It is important to notice that a type-0 approximation is {\it not} necessarily feasible, i.e. it can occur that $z \notin \dom g$.
This can easily be verified e.g. for $g$ being the indicator function of a convex set, and requires us to treat this approximation slightly differently from the following ones in Appendix \ref{sec:type_0}.
\colblue{Observe that it is easy to check a posteriori the precision of a type-0 approximation provided $\partial g$ is easy to evaluate. Indeed,
  if $e\in \tau\partial g(z)+ z-y$, standard estimates show that $\| z - \prox_{\tau g}(y) \|\le \|e\|$ and $z\approx_0^{\e}\prox_{\tau g}(y)$
  for $\e=\|e\|^2/(2\tau)$.}

In order to relax the second or third expression in \eqref{eq:optimality_proximal_point}, we need the notion of an $\e$-subdifferential of $g \colon \X \to \Rb$ at $z \in \X$: 
\begin{align*}
 \partial_{\e} g(z) := \{ p \in \X ~|~ g(x) \geq g(z) + \langle p, x-z \rangle - \e ~ \forall x \in \X \}. 
\end{align*}
As a direct consequence of the definition we obtain a notion of $\e$-optimality 
\begin{align}\label{eq:e_subdiff_optimality}
 0 \in \partial_{\e} g(z) \iff g(z) \leq \inf g + \e.
\end{align}
Based on this and the second expression in \eqref{eq:optimality_proximal_point}, we define a second notion of an inexact proximum. 
It has e.g. been considered in \cite{Schmidt2011,Aujol2015} to prove the convergence of inexact proximal gradient methods under the presence of errors.
\begin{definition}\label{def:type1}
 Let $\e \geq 0$. 
 We say that $z \in \X$ is a type-1 approximation of the proximal point $\prox_{\tau g}(y)$ with precision $\e$ if 
 \begin{align*}
  z \approx_1^{\e} \prox_{\tau g}(y)  \overset{\mathrm{def}}{\iff} 0 \in \partial_{\e} G_\tau(z).
 \end{align*}
\end{definition}
Hence, by \eqref{eq:e_subdiff_optimality}, a type-1 approximation minimizes the energy of the proximity function \eqref{eq:proximity_function} up to an error of $\e$. 
It turns out that a type-0 approximation is weaker than a type-1 approximation:
\begin{lemma}\label{lem:prox0}
 Let $z \approx_1^{\e} \prox_{\tau g}(y)$. 
 Then $z \in \dom g $ and 
 \begin{align*}
  \| z - \prox_{\tau g}(y) \| \leq \sqrt{2 \tau \e},
 \end{align*}
 that is $z \approx_0^{\e} \prox_{\tau g}(y)$.
\end{lemma}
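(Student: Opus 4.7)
The plan is to combine two ingredients: the $\varepsilon$-optimality characterization given in \eqref{eq:e_subdiff_optimality} together with the fact that the proximity function $G_\tau$ is $1/\tau$-strongly convex.

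First I would write $\hat z := \prox_{\tau g}(y)$, the unique minimizer of $G_\tau$. By the hypothesis $0 \in \partial_\varepsilon G_\tau(z)$ and the equivalence \eqref{eq:e_subdiff_optimality}, one immediately has
\begin{align*}
  G_\tau(z) \;\le\; \inf G_\tau + \varepsilon \;=\; G_\tau(\hat z) + \varepsilon.
\end{align*}
Since $\hat z \in \dom g$ the right-hand side is finite, so $G_\tau(z) < \infty$, which in particular forces $g(z) < \infty$, i.e.\ $z \in \dom g$.

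Next, $G_\tau = \frac{1}{2\tau}\|\cdot - y\|^2 + g$ is $1/\tau$-strongly convex because the quadratic term is $1/\tau$-strongly convex and $g$ is convex. Applied at the minimizer $\hat z$, the usual strong convexity inequality takes the simple form
\begin{align*}
  G_\tau(z) \;\ge\; G_\tau(\hat z) + \frac{1}{2\tau}\|z - \hat z\|^2,
\end{align*}
(the linear term vanishes because $0 \in \partial G_\tau(\hat z)$).

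Combining the two displays yields $\frac{1}{2\tau}\|z - \hat z\|^2 \le \varepsilon$, i.e.\ $\|z - \hat z\| \le \sqrt{2\tau\varepsilon}$, which is exactly the claimed type-0 bound. I do not foresee any real obstacle here: the only subtlety is invoking the strong-convexity inequality at the minimizer (equivalently, expanding the square in $\frac{1}{2\tau}\|z-y\|^2 = \frac{1}{2\tau}\|(z-\hat z) + (\hat z - y)\|^2$ and using $(\hat z - y)/\tau \in -\partial g(\hat z)$ from \eqref{eq:optimality_proximal_point}), but this is standard and follows directly from the definitions already in the excerpt.
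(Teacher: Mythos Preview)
Your proof is correct and is precisely the standard argument: $\varepsilon$-suboptimality of $G_\tau$ combined with its $1/\tau$-strong convexity immediately gives the bound. The paper does not spell out a proof at all (it only remarks that ``the result is easy to verify'' and cites \cite{Rockafellar1976,Gueler1992,Salzo2012}), so there is nothing further to compare.
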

The result is easy to verify and can be found e.g. in \cite{Rockafellar1976,Gueler1992,Salzo2012}.
An even more restrictive version of an inexact proximum can be obtained by relaxing the third expression in \eqref{eq:optimality_proximal_point}, which has been introduced in \cite{Lemaire1992} and subsequently been used in \cite{Cominetti1997,Salzo2012} in the context of inexact proximal point methods.
\begin{definition}\label{def:type2}
 Let $\e \geq 0$. 
 We say that $z \in \X$ is a type-2 approximation of the proximal point $\prox_{\tau g}(y)$ with precision $\e$ if 
 \begin{align*}
  z \approx_2^{\e} \prox_{\tau g}(y)  \overset{\mathrm{def}}{\iff} \frac{y-z}{\tau} \in \partial_{\e} g(z).
 \end{align*}
\end{definition}
Letting $\phi_\tau(z) = \|z - y \|^2/(2 \tau)$, the following characterization from \cite{Salzo2012} of the $\e$-subdifferential of the proximity function $G_\tau = g + \phi_\tau$ sheds light on the difference between a type-1 and type-2 approximation: 
\begin{align}\label{eq:subdiff_decomp}
 \partial_\e G_\tau(z) 
 &= \bigcup_{0 \leq \e_1 + \e_2 \leq \e} \partial_{\e_1} g(z) + \partial_{\e_2} \phi_\tau(z) \nonumber \\
 &= \bigcup_{0 \leq \e_1 + \e_2 \leq \e} \partial_{\e_1} g(z) + \left\{ \frac{z-y-e}{\tau} : \|e\| \leq \sqrt{2 \tau \e_2} \right\}.
\end{align}
Equation \eqref{eq:subdiff_decomp} decomposes the error in the $\e$-subdifferential of $G_\tau$ into two parts related to $g$ respectively $\phi_\tau$.
As a consequence, for a type-1 approximation it is not clear how the error is distributed between $g$ or $\phi_\tau$, while for a type-2 approximation the error occurs solely in $g$.
Hence a type-2 approximation can be seen as an extreme case of a type-1 approximation with $\e_2 = 0$. 

In view of the decomposition \eqref{eq:subdiff_decomp}, we define a fourth notion of an inexact proximum as the extreme case $\e_1 = 0$, which has been considered in e.g. \cite{Rockafellar1976} and \cite{Gueler1992}. 
\begin{definition}\label{def:type3}
 Let $\e \geq 0$. 
 We say that $z \in \X$ is a type-3 approximation of the proximal point $\prox_{\tau g}(y)$ with precision $\e$ if 
 \begin{align*}
  z \approx_3^{\e} \prox_{\tau g}(y)  \overset{\mathrm{def}}{\iff} \exists e \in \X, \|e\| \leq \sqrt{2 \tau \e} : z = \prox_{\tau_g}(y + e).
 \end{align*}
\end{definition}
Definition \ref{def:type3} gives the notion of a ``correct'' output for an incorrect input of the proximal operator.
Being the two extreme cases, type-2 and type-3 proxima are also proxima of type 1. 
The decomposition \eqref{eq:subdiff_decomp} further leads to the following lemma from \cite{Schmidt2011}, which allows to treat the type-1, -2 and -3 approximations in the same setting. 
\begin{lemma}\label{lem:existence_r}
 If $z \in \X$ is a type-1 approximation of $\prox_{\tau g}(y)$ with precision $\e$, then there exists $r \in \X$ with $\|r\| \leq \sqrt{2 \tau \e}$ such that 
 \begin{align*}
  (y-z-r)/\tau \in \partial_\e g(z).
 \end{align*}
\end{lemma}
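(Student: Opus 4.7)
The plan is to derive the lemma directly from the decomposition formula \eqref{eq:subdiff_decomp}, which already does essentially all the work. By Definition \ref{def:type1}, the hypothesis $z \approx_1^\e \prox_{\tau g}(y)$ reads $0 \in \partial_\e G_\tau(z)$, where $G_\tau = g + \phi_\tau$ with $\phi_\tau(x) = \|x-y\|^2/(2\tau)$. So the strategy is to apply \eqref{eq:subdiff_decomp} at the element $0$ and read off the desired vector $r$.

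More precisely, first I invoke \eqref{eq:subdiff_decomp} to produce nonnegative reals $\e_1, \e_2$ with $\e_1 + \e_2 \leq \e$, an element $p \in \partial_{\e_1} g(z)$, and a vector $e \in \X$ with $\|e\| \leq \sqrt{2\tau\e_2}$ such that
\begin{equation*}
 0 = p + \frac{z - y - e}{\tau}.
\end{equation*}
Solving for $p$ gives $p = (y - z + e)/\tau$. I then set $r := -e$, so that $\|r\| = \|e\| \leq \sqrt{2\tau\e_2} \leq \sqrt{2\tau\e}$, and
\begin{equation*}
 \frac{y - z - r}{\tau} = \frac{y - z + e}{\tau} = p.
\end{equation*}

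Finally, since $\e_1 \leq \e$, the monotonicity of the $\e$-subdifferential in $\e$ (which is immediate from its definition: a larger slack only enlarges the set) gives $\partial_{\e_1} g(z) \subseteq \partial_\e g(z)$. Hence $(y-z-r)/\tau = p \in \partial_\e g(z)$, which is exactly the conclusion of the lemma.

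The only conceptual obstacle is really the decomposition \eqref{eq:subdiff_decomp} itself, but this has been quoted from \cite{Salzo2012} just above and may be treated as given; everything after that is a mechanical rewriting, choosing $r = -e$ and using monotonicity of $\partial_\e g$ in $\e$. No compactness, duality, or limiting argument is required.
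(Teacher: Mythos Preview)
Your proof is correct and follows exactly the route the paper indicates: the paper does not spell out a proof but simply states that the decomposition \eqref{eq:subdiff_decomp} ``leads to'' Lemma~\ref{lem:existence_r} (attributed to \cite{Schmidt2011}), and you have filled in precisely those details by reading off $r=-e$ from \eqref{eq:subdiff_decomp} and using monotonicity of $\partial_\e g$ in $\e$.
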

Now note that letting $r = 0$ in Lemma \ref{lem:existence_r} gives a type-2 approximation, replacing the $\e$-subdifferential by a normal one gives a type-3 approximation.
We mention that there exist further notions of approximations of the proximal point, e.g. used in \cite{Gueler1992}, and refer to \cite[Section 2.2]{Villa2013} for a compact discussion.

Even tough we prove our results for different types of approximations, the most interesting one in terms of practicability is the approximation of type 2. 
This is due to the following insights obtained by \cite{Villa2013}:
Without loss of generality let $g(x) = w(Bx),$
with proper, convex and l.s.c. $w\colon \Z \to \Rb$ and linear $B \colon \X \to \Z$. 
Then the calculation of the proximum requires to solve 
\begin{align}\label{eq:prox_prob}
 \min_{x\in\X}~ G_\tau(x) 
 &=  \min_{x \in \X} ~ \frac{1}{2\tau} \| x - y \|^2 + w(Bx).
\end{align}
Now if there exists $x_0 \in \X$ such that $g$ is continuous in $Bx_0$, the Fenchel-Moreau-Rockafellar duality formula \cite[Corollary 2.8.5]{Zalinescu2002} states that 
\begin{align*}
 \min_{x\in\X}~ G_\tau(x) = - \min_{z \in \Z} ~ \frac{\tau}{2} \|B^*z\|^2 - \langle B^*z,y\rangle + w^*(z), 
\end{align*}
where we refer to the right hand side as the ``dual'' problem $W_\tau(z)$.
Furthermore we can always recover the primal solution $\xh$ from the dual solution $\zh$ via the relation $\xh = y - B^* \zh$. 
Most importantly, we obtain that $\xh$ and $\zh$ solve the primal respectively dual problem if and only if the duality gap $\Gcal(x,z) := G_\tau(x) + W_\tau(z)$ vanishes, i.e. 
\begin{align*}
 0 = \min_{(x,z) \in \X \times \Z} G_\tau(x) + W_\tau(z) = \Gcal(\xh,\zh).
\end{align*}
The following result in \cite{Villa2013} states that the duality gap can also be used as a criterion to assess admissible type-2 approximations of the proximal point: 
\begin{proposition}\label{prop:duality_gap_prox}
 Let $z \in \Z$. 
 Then 
 \begin{align*}
  \Gcal(y - B^* z,z) \leq \e \Rightarrow y - B^*z \approx_2^\e \prox_{\tau g}(y).
 \end{align*}
\end{proposition}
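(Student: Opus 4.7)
The plan is to verify the type-2 inclusion directly from its definition. Writing $x' := y - B^* z$, I need to show $(y-x')/\tau \in \partial_\e g(x')$. The key observation is that, for this particular choice of $x'$, the duality gap $\Gcal(x',z) = G_\tau(x') + W_\tau(z)$ collapses (up to the scaling convention between $W_\tau$ and the primal recovery formula) to a pure Fenchel--Young defect for the outer function $w$, and not for $g$ itself. Once this collapse is isolated, everything reduces to a one-line $\e$-subdifferential calculus argument.

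First I would substitute $x' = y - B^*z$ into $G_\tau$, obtaining $G_\tau(x') = \frac{1}{2\tau}\|B^*z\|^2 + w(Bx')$, and then add $W_\tau(z)$. The quadratic terms in $\|B^*z\|^2$ and the linear term $\langle B^*z, y\rangle$ combine by using the identity $\langle B^*z, y\rangle = \langle z, Bx'\rangle + \tau\|B^*z\|^2$ (which is just the definition of $x'$ pushed through $B$). After cancellation one is left with the clean identity
\begin{equation*}
\Gcal(x',z) \;=\; w(Bx') + w^*(z) - \langle z, Bx'\rangle.
\end{equation*}
The right-hand side is nonnegative by Fenchel--Young, and the hypothesis $\Gcal(x',z) \le \e$ therefore says exactly that $z$ lies in the $\e$-subdifferential of $w$ at $Bx'$, i.e.\ $z \in \partial_\e w(Bx')$.

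Next I would invoke the chain-rule inclusion $B^* \partial_\e w(Bx') \subseteq \partial_\e(w \circ B)(x') = \partial_\e g(x')$, which is immediate from the definition of the $\e$-subdifferential: for any $x \in \X$,
\begin{equation*}
g(x) = w(Bx) \;\ge\; w(Bx') + \langle z, B(x-x')\rangle - \e = g(x') + \langle B^*z, x-x'\rangle - \e .
\end{equation*}
Combining this with the identity $(y - x')/\tau = B^*z$ (under the scaling consistent with the recovery formula $\xh = y - B^*\zh$ used earlier) gives $(y-x')/\tau \in \partial_\e g(x')$, which is precisely the definition of $x' \approx_2^\e \prox_{\tau g}(y)$.

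The only non-trivial step is the algebraic telescoping that produces the clean identity for $\Gcal(x',z)$; once that identity is in hand, the passage from an $\e$-small duality gap to a type-2 approximation is a one-line application of the Fenchel--Young characterization of $\partial_\e w$ together with the standard $\e$-subdifferential chain rule under a linear precomposition. No qualification conditions beyond the continuity assumption already used to obtain the Fenchel--Moreau--Rockafellar duality are needed.
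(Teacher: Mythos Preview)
Your approach---collapse the duality gap to a pure Fenchel--Young defect for $w$, deduce $z\in\partial_\e w(Bx')$, and pull back through $B^*$---is exactly the standard argument and is what the cited reference \cite{Villa2013} does. The paper itself does not prove this proposition; it merely quotes it, so there is nothing further to compare at the level of strategy.

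There is, however, a genuine inconsistency in your write-up that you should clean up rather than bracket off with parenthetical disclaimers about ``scaling''. You set $x' := y - B^*z$ following the paper's stated recovery formula, but then invoke the identity $\langle B^*z, y\rangle = \langle z, Bx'\rangle + \tau\|B^*z\|^2$, which only holds when $x' = y - \tau B^*z$. With the paper's $W_\tau(z) = \frac{\tau}{2}\|B^*z\|^2 - \langle B^*z, y\rangle + w^*(z)$, the telescoping you claim,
\[
\Gcal(x',z) = w(Bx') + w^*(z) - \langle z, Bx'\rangle,
\]
is valid for $x' = y - \tau B^*z$ but \emph{not} for $x' = y - B^*z$ (unless $\tau = 1$): with the latter choice the quadratic residue is $\bigl(\tfrac{1}{2\tau} + \tfrac{\tau}{2} - 1\bigr)\|B^*z\|^2 = \tfrac{(\tau-1)^2}{2\tau}\|B^*z\|^2 \neq 0$. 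The paper's displayed recovery formula $\xh = y - B^*\zh$ (and hence the proposition as literally stated) is missing a factor $\tau$; the correct primal recovery from Fenchel--Rockafellar is $\xh = y - \tau B^*\zh$. Once you use $x' = y - \tau B^*z$ throughout, your computation goes through verbatim and $(y-x')/\tau = B^*z \in \partial_\e g(x')$ follows cleanly. State this explicitly rather than hedging.
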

Proposition \ref{prop:duality_gap_prox} has an interesting implication: 
if one can construct a feasible dual variable $z$ during the solution of \eqref{eq:prox_prob}, it is easy to check the admissibility of the corresponding primal variable $x$ to be a type-2 approximation by evaluating the duality gap.
We shall make use of that in the numerical experiments in Section \ref{sec:numerics}.

\colblue{
Of course, since a type-2 approximation automatically is a type-1 and type-0 approximation, the above argumentation is also valid to find feasible approximations in these cases, which however is of no use. 
Since type-1 and type-0 approximations are technically less restrictive, it stands to find criteria on how to evaluate when an approximation is of such type \emph{without} already being an approximation of type 2.
An example of a type-0 approximation may be found in problems where the desired proximum is the projection onto the intersection of convex sets. 
The (inexact) proximum may be computed in a straightforward fashion using Dykstra's algorithm \cite{Dykstra1983}, which has e.g. been done in \cite{Brancolini2018} or \cite[Ex. 7.7]{Alberti2003,Chambolle2016a} for Mumford-Shah-type segmentation problems. 
Depending on the involved sets, one may get an upper bound on the maximal distance of the current iterate of Dykstra's algorithm to these sets, obtaining a bound on how far the current iterate is from the true proximum. 
These considerations, however, require to be tested in the respective cases. 
}

\section{Inexact primal-dual algorithms}\label{sec:ipd}
We can now prove the convergence of some primal-dual algorithms under the presence of the respective error. 
We start with the type-1, -2 and -3 approximations and outline in Appendix \ref{sec:type_0} how to get a grip also on the type-0 approximation.
The convergence analysis in this chapter is based on a combination of techniques derived in previous works on the topic: 
similar results on the convergence of exact primal-dual algorithms can be found e.g. in \cite{Chambolle2011,Chambolle2016} and \cite{Chambolle2016a}, the techniques to obtain error bounds for the inexact proximum are mainly taken from \cite{Schmidt2011} and \cite{Aujol2015}. 
We consider the saddle-point problem 
\begin{align}\label{eq:saddle_point_problem}
 \min_{x \in \X} \max_{y \in \Y} ~ \Lcal(x,y) = \langle Kx, y \rangle + f(x) + g(x) - h^*(y),
\end{align}
where we make the following assumptions:
\begin{enumerate}
 \item $K \colon \X \to \Y$ is a linear and bounded operator between Hilbert spaces $\X$ and $\Y$ with norm $L = \|K\|$,
 \item $f \colon \X \to \Rb$ is proper, convex, lower semicontinuous and differentiable with $L_f$-Lipschitz gradient, 
   \begin{align*}
    \| \nabla f(x) - \nabla f(x') \| \leq L_f \| x - x' \| \quad \text{ for all } x,x' \in \dom f,
   \end{align*}
 \item $g ,h \colon \X \to \Rb$ are proper, lower semicontinuous and convex functions, 
 \item problem \eqref{eq:saddle_point_problem} admits at least one solution $(\xs,\ys) \in \X \times \Y$.
\end{enumerate}
It is well-known that taking the supremum over $y$ in $\Lcal(x,y)$ leads to the corresponding ``primal'' formulation of the saddle-point problem \eqref{eq:saddle_point_problem} 
\begin{align*}
 \min_{x \in \X} ~ f(x) + g(x) + h(Kx),  
\end{align*}
which for a lot of variational problems might be the starting point. 
Analogously, taking the infimum over $x$ leads to the dual problem.
Given an algorithm producing iterates $(x^N,y^N)$ for the solution of \eqref{eq:saddle_point_problem}, the goal of this section is to obtain estimates 
\begin{align}\label{eq:goal}
 \Lcal(x^N,y) - \Lcal(x,y^N) \leq \frac{C(x,y,x^0,y^0)}{N^\alpha}
\end{align}
for $\alpha > 0$ and $(x,y) \in \X \times \Y$. 
If $(x,y) = (\xs,\ys)$ is a saddle point, the left hand side vanishes if and only if the pair $(x^N,y^N)$ is a saddle point itself, yielding a convergence rate in terms of the primal-dual objective in $\Bigo{1/N^\alpha}$.
Under mild additional assumptions one can then derive estimates e.g. for the error in the primal objective. 
If the supremum over $y$ in $\Lcal(x^N,y)$ is attained at some $\tilde{y}$, one easily sees that 
\begin{align}\label{eq:primal_estimate}
 &f(x^N) + g(x^N) + h(Kx^N) - [ f(\xs) + g(\xs) + h(K\xs) ] \\
 &= \sup_{y \in \Y} ~ \Lcal(x^N,y) - \sup_{y \in \Y} ~ \Lcal(\xs,y) 
 \leq \Lcal(x^N,\yt) - \Lcal(\xs,y^N)
 \leq \frac{C(\xs,\yt,x^0,y^0)}{N^\alpha},\nonumber
\end{align}
giving a convergence estimate for the primal objective.

In the original versions of primal-dual algorithms (e.g. \cite{Chambolle2011,Chambolle2016}), the authors require $h^*$ and $g$ to have a simple structure such that their proximal operators \eqref{eq:proximal_operator} can be sufficiently easily evaluated. 
A particular feature of most of these operators is that they have a closed-form solution and can hence be evaluated exactly.
We study the situation where the proximal operators for $g$ or $h^*$ can only be evaluated up to a certain precision in the sense of Section \ref{sec:inexact_proximum}, and as well the gradient of $f$ may contain errors.
As opposed to the general iteration of an exact primal-dual algorithm \cite{Chambolle2016}
\begin{align}\label{eq:pd_exact}
  \begin{split}
   \yh &= \prox_{\sigma h^*} (\yb + \sigma K\xt), \\ 
   \xh &= \prox_{\tau g} (\xb - \tau ( K^*\yt + \nabla f(\xb))),
  \end{split}
\end{align}
where $(\xb,\yb)$ and $(\xt,\yt)$ are the previous points, and $(\xh,\yh)$ are the updated exact points, we introduce the general iteration of an inexact primal-dual algorithm
\begin{align} \label{eq:pd_general}
  \begin{split}
   \yc &\approx_2^\delta \prox_{\sigma h^*} (\yb + \sigma K\xt), \\ 
   \xc &\approx_i^\varepsilon \prox_{\tau g} (\xb - \tau ( K^*\yt + \nabla f(\xb) + e)).
  \end{split}
\end{align}
Here the updated points $(\xc,\yc)$ denote the inexact proximal point 
, which are only computed up to precision $\e$ respectively $\delta$,
in the sense of a type-$2$ approximation from Section \ref{sec:inexact_proximum} for $\yc$ and a type-$i$ approximation for $i\in \{1,2,3\}$ for $\xc$. 
The vector $e \in \X$ denotes a possible error in the gradient of $f$.
We use two different pairs of input points $(\xb,\yb)$ and $(\xt,\yt)$ in order to include intermediate overrelaxed input points. 
It is clear, however, that we require $\xt$ to depend on $\xc$ respectively $\yt$ on $\yc$ in order to couple the primal and dual updates.

At first glance it seems counterintuitive that we allow errors of type 1, 2 and 3 in $\xc$, while only allowing for type-2 errors in $\yc$. 
The following general descent rule for the iteration \eqref{eq:pd_general} sheds some more light on this fact and forms the basis for all the following proofs.
It can be derived using simple convexity results and resembles the classical energy descent rules for forward-backward splittings.
It can then be used to obtain estimates on the decay of the objective of the form \eqref{eq:goal}. 
We prove the descent rule for a type-1 approximation of the primal proximum since we always obtain the result for a type-2 or type-3 approximation as a special case.
\begin{lemma}\label{lem:general_inequality}
 Let $\tau, \sigma > 0$ and $(\xc,\yc)$ be obtained from $(\xb,\yb)$ and $(\xt,\yt)$ and the updates \eqref{eq:pd_general} for $i = 1$.
 Then for every $(x,y) \in \X \times \Y$ we have
 \begin{align}
    \Lcal(\xc,y) &- \Lcal(x,\yc) \leq ~ \frac{\| x - \xb \|^2}{2 \tau} + \frac{\|y - \yb \|^2}{2 \sigma} - \frac{\| x - \xc \|^2}{2 \tau} - \frac{1-\tau L_f}{2 \tau} \| \xb - \xc \|^2 \nonumber \\
 & - \frac{\| y - \yc \|^2}{2\sigma} - \frac{\| \yb - \yc \|^2}{2 \sigma} + \langle K(x-\xc), \yt - \yc \rangle - \langle K (\xt - \xc), y- \yc \rangle \nonumber \\
 & \qquad + \left( \|e \| + \sqrt{2 \e / \tau} \right) \| x - \xc \| + \e + \delta. \label{eq:general_inequality}
 \end{align}
\end{lemma}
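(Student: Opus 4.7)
The plan is to mimic the classical one-step descent analysis for the exact primal-dual algorithm, collecting the inexactness as additive error terms. The structure is: (i) write down the $\e$-subdifferential inequality coming from each of the two inexact prox steps; (ii) add the standard descent/convexity inequalities for $f$; (iii) re-express inner products as squared distances using the three-point identity; (iv) identify the cross-bilinear $K$-terms with the $\langle K(x-\xc),\yt-\yc\rangle$ and $\langle K(\xt-\xc),y-\yc\rangle$ expressions appearing on the right-hand side; (v) control the $r$- and $e$-residuals by Cauchy--Schwarz.

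More concretely, first I would use the type-2 condition on $\yc$, which by Definition \ref{def:type2} gives $(\yb-\yc)/\sigma + K\xt \in \partial_\delta h^*(\yc)$, hence for every $y\in\Y$
\begin{equation*}
h^*(\yc)-h^*(y) \le \tfrac{1}{\sigma}\langle \yb-\yc,\yc-y\rangle + \langle K\xt,\yc-y\rangle + \delta.
\end{equation*}
For $\xc$ I would invoke Lemma~\ref{lem:existence_r} to find $r$ with $\|r\|\le\sqrt{2\tau\e}$ such that
$(\xb-\xc-r)/\tau - K^*\yt - \nabla f(\xb) - e \in \partial_\e g(\xc)$,
which after testing against $x$ gives
\begin{equation*}
g(\xc)-g(x) \le \tfrac{1}{\tau}\langle\xb-\xc,\xc-x\rangle - \tfrac{1}{\tau}\langle r,\xc-x\rangle - \langle K^*\yt,\xc-x\rangle - \langle \nabla f(\xb),\xc-x\rangle - \langle e,\xc-x\rangle + \e.
\end{equation*}
Note that this is exactly where type-2 for $\yc$ (rather than type-1) pays off: no stray $r$-term on the dual side.

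Next I would handle $f$ by combining its convexity inequality at $\xb$ tested at $x$ with the descent lemma at $\xc$, yielding
\begin{equation*}
f(\xc)-f(x) \le \langle \nabla f(\xb),\xc-x\rangle + \tfrac{L_f}{2}\|\xb-\xc\|^2,
\end{equation*}
so the gradient terms cancel cleanly with those coming from the $g$-bound. Then I would substitute everything into
$\Lcal(\xc,y)-\Lcal(x,\yc) = [f(\xc)-f(x)] + [g(\xc)-g(x)] + [h^*(\yc)-h^*(y)] + \langle K\xc,y\rangle - \langle Kx,\yc\rangle$,
convert the two inner products $\langle \yb-\yc,\yc-y\rangle/\sigma$ and $\langle\xb-\xc,\xc-x\rangle/\tau$ via the identity $2\langle a-b,c-b\rangle = \|a-b\|^2+\|c-b\|^2-\|a-c\|^2$ into the four squared-norm terms appearing in \eqref{eq:general_inequality}, and merge $-\|\xb-\xc\|^2/(2\tau)+L_f\|\xb-\xc\|^2/2 = -(1-\tau L_f)\|\xb-\xc\|^2/(2\tau)$.

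The last chore is algebraic: check that the bilinear remainder
$-\langle K^*\yt,\xc-x\rangle + \langle K\xt,\yc-y\rangle + \langle K\xc,y\rangle - \langle Kx,\yc\rangle$
regroups exactly into $\langle K(x-\xc),\yt-\yc\rangle - \langle K(\xt-\xc),y-\yc\rangle$; this is a short expansion once one adds and subtracts $\langle K\xc,\yc\rangle$. Finally, by Cauchy--Schwarz $|\langle r,\xc-x\rangle|/\tau \le \sqrt{2\e/\tau}\,\|x-\xc\|$ and $|\langle e,\xc-x\rangle|\le \|e\|\,\|x-\xc\|$, producing the error term $(\|e\|+\sqrt{2\e/\tau})\|x-\xc\| + \e + \delta$. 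The main obstacle is not any single step but the careful bookkeeping through the cross-terms in step (iv); everything else is a direct application of the inexact subdifferential inequalities and the three-point identity.
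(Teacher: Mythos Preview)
Your proposal is correct and follows essentially the same route as the paper's proof: the paper likewise uses the type-2 subdifferential inclusion for $\yc$, Lemma~\ref{lem:existence_r} for $\xc$, the convexity-plus-descent bound $f(\xc)\le f(x)+\langle\nabla f(\xb),\xc-x\rangle+\tfrac{L_f}{2}\|\xc-\xb\|^2$, the three-point identity, and Cauchy--Schwarz on the $r$- and $e$-terms, then inserts $\pm\langle K\xc,y\rangle$, $\pm\langle Kx,\yc\rangle$, $\pm\langle K\xc,\yc\rangle$ to regroup the bilinear pieces. Your bookkeeping in step~(iv) is exactly this rearrangement, so there is no substantive difference.
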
 
\begin{proof}
 For the inexact type-2 proximum $\yc$ we have by Definition \ref{def:type2} that \\ $(\bar{y} + \sigma K \tilde{x} - \yc)/\sigma \in \partial_\delta h^*(\yc)$, 
so by the definition of the subdifferential we find
\begin{align}\label{eq:dual_inequality}
 h^*(\yc) 
 & \leq h^*(y) + \langle \frac{\yb + \sigma K \xt - \yc}{\sigma}, \yc -y \rangle \nonumber + \delta \nonumber \\
 &= h^*(y) + \langle \frac{\yb - \yc}{\sigma}, \yc -y \rangle + \langle K \xt, \yc - y \rangle + \delta \nonumber \\ 
 & \leq h^*(y) - \frac{\| \yb - \yc \|^2}{2\sigma} - \frac{\| y - \yc \|^2}{2 \sigma} + \frac{\| \yb - y \|^2}{2 \sigma} + \langle K \xt, \yc - y \rangle + \delta. 
\end{align}
For the inexact type-$1$ primal proximum, from Definition \ref{def:type1} and Lemma \ref{lem:existence_r} we have that there exists a vector $r$ with $\|r\| \leq \sqrt{2 \tau \varepsilon}$ such that 
\begin{align*}
 (\bar{x} - \tau (K^* \tilde{y} + \nabla f(\bar{x}) + e) - \xc - r)/\tau \in \partial_{\varepsilon} g(\xc).
\end{align*}
Hence we find that
\begin{align}\label{eq:primal_inequality}
 g(\xc) 
 & \leq g(x) + \langle \frac{\xb - \tau (K^* \yt + \nabla f(\xb) + e) - \xc - r}{\tau}, \xc - x \rangle + \varepsilon \nonumber \\
 & \leq g(x) - \frac{\| \xb - \xc \|^2}{2 \tau} - \frac{\| x - \xc \|^2}{2 \tau} + \frac{\| \xb - x \|^2}{2 \tau} + \langle \yt, K(x - \xc) \rangle \nonumber\\
 &\qquad - \langle \nabla f(\xb), \xc - x \rangle   + \left( \|e\| + \sqrt{(2 \varepsilon / \tau} \right) \|x - \xc\| + \varepsilon,
\end{align}
where we applied the Cauchy-Schwarz inequality to the error term.
Further by the Lipschitz property \colblue{and convexity} of $f$ we have (cf. \cite{Nesterov2014}) 
\begin{align}\label{eq:Lipschitz}
 f(\xc) \leq f(x) + \langle \nabla f(\xb), \xc - x \rangle + \frac{L_f}{2} \| \xc - \xb \|^2.
\end{align}
Now we add the equations \eqref{eq:dual_inequality}, \eqref{eq:primal_inequality} and \eqref{eq:Lipschitz},  insert 
\begin{align*}
 \langle K\xc,y \rangle - \langle K\xc,y \rangle, \qquad 
 \langle Kx,\yc \rangle - \langle Kx,\yc \rangle, \qquad 
 \langle K\xc,\yc \rangle - \langle K\xc,\yc \rangle, 
\end{align*}
and rearrange to arrive at the result.
\end{proof}
We point out that, as a special case, choosing a type-2 approximation for the primal proximum in Lemma \ref{lem:general_inequality} corresponds to dropping the square root in the estimate \eqref{eq:general_inequality}, choosing a type-3 approximation corresponds to dropping the additional $\e$ at the end.  
Also note that the above descent rule is the same as the one in \cite{Chambolle2011,Chambolle2016} except for the additional error terms in the last line of \eqref{eq:general_inequality}.

Lemma \ref{lem:general_inequality} has an immediate implication: 
in order to control the errors $\|e\|$ and $\e$ in the last line of Lemma \ref{lem:general_inequality} it is obvious that we need to find a useful bound on $\| x - \xc \|$.  
We shall obtain this bound using a linear extrapolation in the primal variable $x$ \cite{Chambolle2011}. 
However, if we allow e.g. a type-1 approximation also in $\yc$, we obtain an additional error term in \eqref{eq:general_inequality} involving $\| y - \yc\|$ that we need to bound as well.
Even though we shall be able to obtain a bound in most cases, it will be arbitrarily weak due to the asymmetric nature of the used primal-dual algorithms, or go along with severe restrictions on the step sizes. 
This fact will become more obvious from the proofs in the following.


\subsection{The convex case: no acceleration} 
We start with a proof for a basic version of algorithm \eqref{eq:pd_general} using a technical lemma taken from \cite{Schmidt2011} (see Appendix \ref{sec:technical_lemmas}).
The following inexact primal-dual algorithm corresponds to the choice 
\begin{align}\label{eq:choices1}
 (\xc,\yc) = (x^{n+1},y^{n+1}), \quad (\xb,\yb) = (x^n, y^n), \quad (\xt, \yt) = (2x^n - x^{n-1},y^{n+1}) 
\end{align}
in algorithm \eqref{eq:pd_general}:
\begin{align}\label{eq:alg_no_acc}
 \begin{split}
 y^{n+1} &\approx_2^{\delta_{n+1}} \prox_{\sigma h^*} (y^n + \sigma K (2 x^n - x^{n-1} )) \\
 x^{n+1} &\approx_i^{\varepsilon_{n+1}} \prox_{\tau g} ( x^n - \tau (K^* y^{n+1} + \nabla f(x^n) + e^{n+1})).
 \end{split}
\end{align}
\begin{theorem}\label{thm:inex_pd_basic}
 Let $L = \|K\|$ and choose \colblue{small $\beta > 0$} and $\tau, \sigma > 0$ such that $\tau L_f + \sigma \tau L^2 + \tau \beta L< 1$, and let the iterates $(x^n, y^n)$ be defined by Algorithm \eqref{eq:alg_no_acc} for $i \in \{1,2,3\}$. 
 Then for any $N \geq 1$ and $X^N := \left(\sum_{n=1}^N x^n \right)/N$, $ Y^N := \left(\sum_{n=1}^N y^n \right)/N$
 we have for a saddle point $(\xs,\ys) \in \X \times \Y$ that
 \begin{align}
  \Lcal (X^N,\ys) &- \Lcal (\xs, Y^N) \nonumber \\
  &\leq  \frac{1}{2\tau N} \left( \|\xs - x^0 \| + \sqrt{\frac{\tau}{\sigma}} \|\ys - y^0 \| + 2 A_{N,i} + \sqrt{2 B_{N,i}} \right)^2, \label{eq:no_acc_Lagrangian} 
 \end{align}
 where
 \begin{alignat*}{3}
  A_{N,1} &= \sum_{n=1}^N \tau \| e^n \| + \sqrt{2 \tau \varepsilon_n}, && 
  \quad B_{N,1} = \sum_{n=1}^N \tau \varepsilon_n + \tau \delta_n, \\
  A_{N,2} &= \sum_{n=1}^N \tau \| e^n \|, && 
  \quad B_{N,2} = \sum_{n=1}^N \tau \varepsilon_n + \tau \delta_n, \\
  A_{N,3} &= \sum_{n=1}^N \tau \| e^n \| + \sqrt{2 \tau \varepsilon_n}, &&
  \quad B_{N,3} = \sum_{n=1}^N \tau \delta_n.
 \end{alignat*}
\end{theorem}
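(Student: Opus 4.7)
My plan is to apply Lemma \ref{lem:general_inequality} at each step with the substitutions from \eqref{eq:choices1}, sum the resulting inequalities from $n=0$ to $N-1$, and then close the estimate via the technical recursion lemma of \cite{Schmidt2011} recalled in Appendix \ref{sec:technical_lemmas}. With the convention $x^{-1}:=x^0$, the choice $\yt=\yc=y^{n+1}$ annihilates the first cross term in \eqref{eq:general_inequality}, while the extrapolation $\xt=2x^n-x^{n-1}$ turns the second into the telescopic expression $\langle K(x^{n+1}-x^n),y-y^{n+1}\rangle-\langle K(x^n-x^{n-1}),y-y^{n+1}\rangle$. After summation, the squared distances $\|x-x^n\|^2/(2\tau)$ and $\|y-y^n\|^2/(2\sigma)$ telescope; the cross-term sum collapses to a boundary contribution $\langle K(x^N-x^{N-1}),y-y^N\rangle$ plus a residual of shifted scalar products, both of which I would bound by Young's inequality calibrated so that the negative $\frac{1-\tau L_f}{2\tau}\|x^{n+1}-x^n\|^2$ and $\frac{1}{2\sigma}\|y^{n+1}-y^n\|^2$ contributions produced by Lemma \ref{lem:general_inequality} absorb them. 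The extra $\tau\beta L$ slack in the step-size condition is exactly what is needed to also treat the boundary term while leaving positive residues $\|\xs-x^N\|^2/(2\tau)$ and $\|\ys-y^N\|^2/(2\sigma)$ on the left, which will anchor the recursion.

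Using convexity of $\Lcal(\cdot,y)$ and concavity of $\Lcal(x,\cdot)$ to pass to the averages $(X^N,Y^N)$ and evaluating at a saddle point $(x,y)=(\xs,\ys)$, where the primal-dual gap on the left is nonnegative, the summed inequality rearranges into
\begin{equation*}
a_N^2 + 2\tau N\bigl[\Lcal(X^N,\ys)-\Lcal(\xs,Y^N)\bigr] \leq S_N + \sum_{n=1}^{N}\lambda_n\,a_n,
\end{equation*}
with $a_n:=\bigl(\|\xs-x^n\|^2+(\tau/\sigma)\|\ys-y^n\|^2\bigr)^{1/2}$, $S_N:=a_0^2+2B_{N,i}$ and $\lambda_n:=2\tau\|e^n\|+2\sqrt{2\tau\e_n}$; as noted after Lemma \ref{lem:general_inequality}, the $\sqrt{2\tau\e_n}$ summand in $\lambda_n$ drops for type-$2$ proxima and the $\tau\e_n$ contribution drops from $B_{N,i}$ for type-$3$ proxima, so that in all three cases $\sum_n\lambda_n=2A_{N,i}$ with the $A_{N,i},B_{N,i}$ of the statement.

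Finally, discarding the nonnegative gap on the left, the scalar recursion $a_N^2\leq S_N+\sum_n\lambda_n a_n$ is exactly the hypothesis of Schmidt's lemma, which yields $\max_{n\leq N}a_n\leq A_{N,i}+\sqrt{A_{N,i}^2+S_N}$; substituting back into $\sum\lambda_n a_n$ and using the elementary inequality $S_N+2A_{N,i}(A_{N,i}+\sqrt{A_{N,i}^2+S_N})\leq(2A_{N,i}+\sqrt{S_N})^2$ together with the subadditivity $\sqrt{S_N}\leq\|\xs-x^0\|+\sqrt{\tau/\sigma}\,\|\ys-y^0\|+\sqrt{2B_{N,i}}$ gives the theorem after division by $2\tau N$. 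The main obstacle is precisely this closure step: the error terms multiply $\|\xs-x^n\|$, a quantity which itself is only controlled through the very inequality one is deriving, so most of the work (the Young absorptions and the $\tau\beta L$ margin) serves less to prove a descent property than to cast the summed inequality into the exact scalar form to which Schmidt's lemma applies.
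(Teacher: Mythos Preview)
Your overall strategy coincides with the paper's: apply Lemma~\ref{lem:general_inequality}, telescope the cross terms, absorb residuals via Young's inequality, sum, and close with Schmidt's recursion lemma. Two points deserve correction, however.

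First, the role of $\beta$ is not what you describe. The paper uses the $\tau\beta L$ slack in Young's inequality on the \emph{shifted} cross term $\langle K(x^n-x^{n-1}),y^{n+1}-y^n\rangle$ (with parameter $\alpha=\sigma L+\beta$), precisely so that a strictly negative $-\frac{\beta}{\sigma L+\beta}\sum\frac{\|y^n-y^{n-1}\|^2}{2\sigma}$ survives on the right-hand side. This residual is irrelevant for the rate estimate~\eqref{eq:no_acc_Lagrangian} but is needed later (Theorem~\ref{thm:convergence_no_acc}) to show $\|y^n-y^{n-1}\|\to 0$ and hence convergence of the iterates. The \emph{boundary} term $\langle K(x^N-x^{N-1}),y-y^N\rangle$ is handled by a separate Young split with $\alpha=(1-\tau L_f)/(L\tau)$, which inevitably reduces the coefficient on $\|y-y^N\|^2/(2\sigma)$ to $1-\tfrac{\tau\sigma L^2}{1-\tau L_f}<1$.

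Second, and as a consequence, your recursion variable $a_n=\bigl(\|\xs-x^n\|^2+(\tau/\sigma)\|\ys-y^n\|^2\bigr)^{1/2}$ does not quite work: the summed inequality leaves only $\|\xs-x^N\|^2+c\,(\tau/\sigma)\|\ys-y^N\|^2$ with $c<1$ on the left, which is \emph{smaller} than $a_N^2$, so you cannot conclude $a_N^2\le S_N+\sum\lambda_n a_n$. The paper avoids this by taking $u_n=\|\xs-x^n\|$ alone in Schmidt's lemma, which is legitimate since the error terms in \eqref{eq:general_inequality} multiply only $\|x-x^{n+1}\|$. With that adjustment your argument goes through verbatim and matches the paper's.
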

\colblue{
\begin{remark}\textup{
 The purpose of the parameter $\beta > 0$ is only of technical nature and is needed in order to show convergence of the iterates of algorithm \eqref{eq:alg_no_acc}. 
 In practice, however, we did not observe any issues setting it super small (respectively, to zero).  
 Its role will become obvious in the next Theorem. }
\end{remark}
}
\begin{proof}
Using the choices \eqref{eq:choices1} in Lemma \ref{lem:general_inequality} leads us to
\begin{align}\label{eq:basic_inequality_no_acc}
 \Lcal&(x^{n+1},y) - \Lcal(x,y^{n+1}) \leq \frac{\| x - x^n \|^2}{2\tau} - \frac{\| x - x^{n+1} \|^2}{2\tau} \nonumber \\ 
 & - \frac{1-\tau L_f}{2\tau} \|x^{n+1} - x^n \|^2 + \frac{\|y - y^n\|^2}{2\sigma} - \frac{\|y - y^{n+1} \|^2}{2\sigma} - \frac{\|y^{n+1} - y^n \|^2}{2\sigma} \nonumber \\
 & \quad +\langle K((x^{n+1} - x^n) - (x^n - x^{n-1})),y-y^{n+1} \rangle \nonumber \\ 
 & \qquad + \left( \|e^{n+1}\| + \sqrt{(2 \varepsilon_{n+1})/\tau} \right) \| x - x^{n+1} \| + \e_{n+1} + \delta_{n+1}.
\end{align}
The goal of the proof is, as usual, to manipulate this inequality such that we obtain a recursion where
most of the terms cancel when summing the inequality.
The starting point is an extension of the scalar product on the right hand side: 
\begin{align*}
 & ~ \langle K((x^{n+1} - x^n) - (x^n - x^{n-1})),y-y^{n+1} \rangle \\
 = & ~\langle K(x^{n+1} - x^n), y - y^{n+1} \rangle - \langle K(x^n - x^{n-1}),y-y^n \rangle \\
 &\qquad \quad + \langle K(x^n - x^{n-1}), y^{n+1} - y^n \rangle \\
 \leq & ~ \langle K(x^{n+1} - x^n), y - y^{n+1} \rangle - \langle K(x^n - x^{n-1}),y-y^n \rangle \nonumber \\ 
 &\qquad \quad + (\tau \sigma L^2 + \tau \beta L) \frac{\|x^n - x^{n-1} \|^2}{2\tau} + \frac{\sigma L}{\sigma L + \beta} \frac{\|y^{n+1} - y^n \|^2}{2\sigma} ,
\end{align*}
where we used (for $\alpha > 0$) that by Young's inequality for every $x,x' \in \X$ and $y,y' \in \Y$,
\begin{align}\label{eq:Young_inequality}
 \langle K(x-x'),y-y' \rangle \leq L \|x-x'\| \|y-y'\| \leq \frac{L\alpha \tau}{2\tau} \|x-x'\|^2 + \frac{L\sigma}{2\alpha \sigma} \|y-y'\|^2,
\end{align}
and $\alpha = \sigma L + \beta$.
This gives 
\begin{align}\label{eq:no_acc_ineq}
 \Lcal&(x^{n+1},y) - \Lcal(x,y^{n+1}) \leq \frac{\| x - x^n \|^2}{2\tau} - \frac{\|x - x^{n+1} \|^2}{2\tau}  \nonumber \\
    &- \frac{1 - \tau L_f}{2\tau} \|x^{n+1} - x^n \|^2 + \frac{\tau \sigma L^2 + \tau \beta L}{2\tau} \|x^n - x^{n-1} \|^2 + \frac{\| y - y^n \|^2}{2\sigma} \nonumber \\ 
    &\quad - \frac{\| y - y^{n+1} \|^2}{2\sigma} - \frac{\beta}{\sigma L + \beta}  \frac{\|y^{n+1} - y^n \|^2}{2\sigma} \nonumber \\
    &\qquad + \langle K(x^{n+1} - x^n), y - y^{n+1} \rangle - \langle K(x^n - x^{n-1}),y-y^n \rangle \nonumber \\ 
   & \qquad \quad + \left( \|e^{n+1}\| + \sqrt{(2 \varepsilon_{n+1})/\tau} \right) \| x - x^{n+1} \| + \varepsilon_{n+1} + \delta_{n+1}.
\end{align}
Now we let $x^0 = x^{-1}$ and sum \eqref{eq:no_acc_ineq} from $n = 0, \dots, N-1$ to obtain 
\begin{align*}
 \sum_{n=1}^N &\Lcal(x^n,y) - \Lcal(x,y^n) \leq  \frac{\| x - x^0 \|^2}{2\tau} + \frac{\| y - y^0 \|^2}{2\sigma} - \frac{\| x - x^N \|^2}{2 \tau} - \frac{\| y - y^N \|^2}{2 \sigma} \\
 & - \frac{1 - \tau L_f}{2\tau} \|x^N - x^{N-1} \|^2   - \kappa \sum_{n=1}^{N-1} \frac{1}{2\tau} \|x^n - x^{n-1} \|^2 \\ 
 & \quad - \frac{\beta}{\sigma L + \beta} \sum_{n=1}^N \frac{\|y^n - y^{n-1} \|^2}{2\sigma} + \langle K(x^N - x^{N-1}, y - y^N \rangle \\
 &\qquad \quad + \sum_{n=1}^N \left( \|e^n\| + \sqrt{(2 \varepsilon_n)\tau} \right) \| x - x^n \| + \sum_{n=1}^N (\varepsilon_n + \delta_n),
\end{align*}
where $\kappa = 1 - \tau L_f - \tau \sigma L^2 - \tau \beta L$.
With Young's inequality on the inner product with $\alpha = \frac{1 - \tau L_f}{L \tau}$ such that $L \alpha \tau = 1 - \tau L_f$ and $(L \sigma)/\alpha = (\tau \sigma L^2)/(1 - \tau L_f) < 1$ we obtain
\begin{align}\label{eq:no_acc_proof_main_ineq}
 &\sum_{n=1}^N \Lcal(x^n,y) - \Lcal(x,y^n) + \frac{1}{2 \tau} \| x - x^N \|^2 + (1 - \frac{\tau \sigma L^2}{1 - \tau L_f}) \frac{\| y - y^N \|^2}{2 \sigma} \nonumber \\ 
 &\leq \frac{1}{2\tau} \| x - x^0 \|^2 + \frac{1}{2\sigma} \| y - y^0 \|^2 + \sum_{n=1}^N \left(\|e^n\| + \sqrt{(2 \varepsilon_n)/\tau} \right) \| x - x^n \|  \\ 
 & \quad + \sum_{n=1}^N ( \varepsilon_n + \delta_n )  - \kappa \sum_{n=1}^{N-1} \frac{1}{2\tau} \|x^n - x^{n-1} \|^2 - \frac{\beta}{\sigma L + \beta} \sum_{n=1}^N \frac{1}{2\sigma}\|y^n - y^{n-1} \|^2. \nonumber
\end{align}
\colblue{Note that the introduction of the parameter $\beta > 0$ allowed us to ``keep'' an additional term involving the difference of the dual iterates on the right hand side of the inequality. 
This will be essential in order to prove the convergence of the iterates later in Theorem \ref{thm:convergence_no_acc}.
}
The above inequality can now be used to bound the sum on the left hand side as well as $\| x - x^N \|$ by only the initialization $(x^0,y^0)$ and the errors $e^n$, $\varepsilon_n$ and $\delta_n$.
We start with the latter and let $(x,y) = (\xs,\ys)$ such that the sum on the left hand side is nonnegative, hence with $\Delta_0(x,y) := \| x - x^0 \|^2/(2\tau) + \| y - y^0 \|^2/(2\sigma)$ we have
\begin{align*}
 \frac{1}{2 \tau} \| &\xs -  x^N \|^2  \\
 &\leq \Delta_0(\xs,\ys) + \sum_{n=1}^N \left( \|e^n\| + \sqrt{(2 \varepsilon_n)/\tau} \right) \| \xs - x^n \| + \sum_{n=1}^N (\varepsilon_n + \delta_n),
\end{align*}
(note that the third and fourth sum on the right hand side are negative).
We multiply the equation by $2 \tau$ and continue with a technical result by \cite[p.12]{Schmidt2011}. 
Using Lemma \ref{lem:technical_result} with $u_N = \| \xs - x^N \|$, $S_N = 2\tau \Delta_0(\xs,\ys) + 2 \tau \sum_{n=1}^N (\varepsilon_n + \delta_n)$ and $\lambda_n = 2 (\tau \|e^n \| + \sqrt{2 \tau \varepsilon_n})$
we obtain a bound on $\| \xs - x^N \|$:
\begin{align*}
 \|x^N - \xs\| \leq A_N + \sqrt{ 2\tau \Delta_0(\xs,\ys) + 2B_N + A_N^2 },
\end{align*}
where we set $A_N := \sum_{n=1}^N (\tau \|e^n \| + \sqrt{2 \tau \varepsilon_n})$ and $B_N := \sum_{n=1}^N \tau (\varepsilon_n + \delta_n)$.
Since $A_N$ and $B_N$ are increasing we find for all $n \leq N$:
\begin{align}\label{eq:no_acc_bounded_x}
 \|x^n - \xs \| 
 &\leq A_n + \sqrt{2\tau \Delta_0(\xs,\ys) + 2B_n + A_n^2 } \nonumber \\
 &\leq A_N + \sqrt{ 2\tau \Delta_0(\xs,\ys) + 2B_N + A_N^2 } \nonumber \\ 
 &\leq 2 A_N + \| x^0 - \xs \| + \sqrt{\frac{\tau}{\sigma}} \|y^0 - \ys \| + \sqrt{2B_N}.
\end{align}
This finally gives 
\begin{align}\label{eq:no_acc_bound2}
  &\Delta_0(\xs,\ys) + \sum_{n=1}^N \left(\|e^n\| + \sqrt{(2 \varepsilon_n)/\tau} \right) \| \xs - x^n \| + \sum_{n=1}^N ( \varepsilon_n + \delta_n ) \nonumber \\
 \leq & \Delta_0(\xs,\ys) + \frac{1}{\tau} B_N + \frac{1}{\tau} A_N \left( 2 A_N + \|x^0 - \xs \| + \sqrt{\frac{\tau}{\sigma}} \|y^0 - \ys \| + \sqrt{2B_N} \right) \nonumber \\
  = &\frac{1}{2\tau} \left( \|x^0 - \xs \|^2 + \frac{\tau}{\sigma} \|y^0 - \ys \|^2 + 2B_N + 4 A_N^2 \right. \nonumber \\ 
  & \hspace{3cm} \left. + 2 A_N \|x^0 - \xs \| + 2 A_N \sqrt{\frac{\tau}{\sigma}} \|y^0 - \ys \| + 2 A_N \sqrt{2 B_N} \right) \nonumber \\
 \leq &\frac{1}{2\tau} \left( \|x^0 - \xs \| + \sqrt{\frac{\tau}{\sigma}} \|y^0 - \ys \| + 2 A_N + \sqrt{2B_N} \right)^2, 
\end{align}
and bounds the error terms.
We now obtain from \eqref{eq:no_acc_proof_main_ineq} that
\begin{align*}
\sum_{n=1}^N \Lcal(x^n,\ys) &- \Lcal(\xs,y^n) \\
&\leq \frac{1}{2\tau} \left( \|x^0 - \xs\| + \sqrt{\frac{\tau}{\sigma}} \|y^0 - \ys \| + 2 A_N + \sqrt{2B_N} \right)^2,
\end{align*}
which gives the assertion using the convexity of $g,f$ and $h^*$, the definition of the ergodic sequences $X^N$ and $Y^N$ and Jensen's inequality. 
It remains to note that for a type-2 approximation the square root in $A_N$ can be dropped and for a type-3 approximation $B_N =0$, which gives the different $A_{N,i}, B_{N,i}$.
\end{proof}
%
\colblue{ 
We can immediately deduce the following corollary.
\begin{corollary}\label{cor:ifo_basic_rate}
 If $i \in \{1,2,3\}$, $\alpha > 0$ and 
 \begin{align*}
  \|e^n\| = \Bigo{\frac{1}{n^{\alpha + \frac{1}{2}}}}, 
  \quad \delta_n = \Bigo{\frac{1}{n^{2\alpha}}}, 
  \quad \e_n = 
  \begin{cases}
   \Bigo{\frac{1}{n^{2\alpha + 1}}}, & \text{if} \quad i \in \{1,3\} \\ 
   \Bigo{\frac{1}{n^{2\alpha}}}, & \text{if} \quad i = 2.
  \end{cases}
 \end{align*}
 then 
 \begin{align*}
  \Lcal(X^N,\ys) - \Lcal(\xs,Y^N) = 
  \begin{cases}
   \Bigo{N^{-1}} & \text{if} \quad \alpha > 1/2, \\
   \Bigo{\ln^2(N)/N} & \text{if} \quad \alpha = 1/2, \\
   \Bigo{N^{-2\alpha}} & \text{if} \quad \alpha \in (0,\frac{1}{2}). \\
  \end{cases}
 \end{align*}
 \end{corollary}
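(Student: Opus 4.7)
The plan is to substitute the prescribed decay rates directly into the error sums $A_{N,i}$ and $B_{N,i}$ of the bound \eqref{eq:no_acc_Lagrangian} from Theorem \ref{thm:inex_pd_basic}, and then invoke the elementary asymptotic
\begin{align*}
\sum_{n=1}^N n^{-\beta} \;=\; \begin{cases} \Bigo{1} & \beta > 1, \\ \Bigo{\ln N} & \beta = 1, \\ \Bigo{N^{1-\beta}} & \beta \in (0,1). \end{cases}
\end{align*}
The three cases in the conclusion of the corollary will correspond precisely to these three regimes of $\beta$.

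First I would verify that, in each of the variants $i \in \{1,2,3\}$, the summand of $A_{N,i}$ is $\Bigo{n^{-(\alpha+1/2)}}$ and the summand of $B_{N,i}$ is $\Bigo{n^{-2\alpha}}$. For $i \in \{1,3\}$ this is exactly why the hypothesis $\e_n = \Bigo{n^{-(2\alpha+1)}}$ is imposed: it forces $\sqrt{2\tau\e_n} = \Bigo{n^{-(\alpha+1/2)}}$, matching the decay of $\tau\|e^n\|$. For $i = 2$ the square-root term is absent from $A_{N,2}$, and $\e_n$ only enters $B_{N,2}$ additively with $\delta_n$; hence the weaker hypothesis $\e_n = \Bigo{n^{-2\alpha}}$ is sufficient. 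In every case the summand of $B_{N,i}$ is controlled by the dominant term $\tau\delta_n = \Bigo{n^{-2\alpha}}$.

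With these preparations the conclusion is a case split. If $\alpha > 1/2$, both $\alpha + 1/2 > 1$ and $2\alpha > 1$, so $A_{N,i}$ and $B_{N,i}$ are bounded uniformly in $N$; the right-hand side of \eqref{eq:no_acc_Lagrangian} is then $\Bigo{1/N}$. If $\alpha = 1/2$, one has $A_{N,i} = \Bigo{\ln N}$ and $\sqrt{B_{N,i}} = \Bigo{\sqrt{\ln N}}$; the squared bracket in \eqref{eq:no_acc_Lagrangian} is dominated by $A_{N,i}^2 = \Bigo{\ln^2 N}$, giving $\Bigo{\ln^2(N)/N}$. If $\alpha \in (0,1/2)$, both $A_{N,i}$ and $\sqrt{B_{N,i}}$ are $\Bigo{N^{1/2-\alpha}}$, so the squared bracket is $\Bigo{N^{1-2\alpha}}$, and dividing by $N$ yields $\Bigo{N^{-2\alpha}}$.

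No real obstacle is expected; the argument is a routine asymptotic computation given Theorem \ref{thm:inex_pd_basic}. The only subtlety worth flagging is the asymmetry in the exponent of $\e_n$ between the type-$1$/type-$3$ case and the type-$2$ case, which reflects the structural difference that $A_{N,i}$ appears squared in the final bound (so its summand must decay at the stronger rate $n^{-(\alpha+1/2)}$) while $B_{N,i}$ appears under a square root (so a summand decaying at rate $n^{-2\alpha}$ suffices).
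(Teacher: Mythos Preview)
Your proposal is correct and follows essentially the same approach as the paper: substitute the error decay rates into the sums $A_{N,i}$ and $B_{N,i}$ from Theorem~\ref{thm:inex_pd_basic}, use the standard asymptotics for $\sum n^{-\beta}$ (which the paper records as Lemma~\ref{lem:behavior}), and split into the three regimes for $\alpha$. Your write-up is in fact more detailed than the paper's, which compresses the same argument into three sentences.
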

\begin{proof}
 Under the assumptions of the corollary, if $\alpha > 1/2$, the sequences $\{\|e^n\|\}$, $\{\e_n\}$ and $\{\delta_n\}$ are summable and the error term on the right hand side of equation \eqref{eq:no_acc_Lagrangian} is bounded, hence we obtain a convergence rate of $O(1/N)$. 
 If $\alpha = 1/2$, all errors behave like $O(1/n)$ (note the square root on $\e_n$ for $i \in \{1,3\}$), hence $A_{N,i} = B_{N,i} = O(\ln(N))$, which gives the second assertion. 
 If $0 < \alpha < 1/2$, then by Lemma \ref{lem:behavior} we obtain $A_{N,i}^2 = B_{N,i} = O(N^{1-2 \alpha})$, which gives the third assertion.
 \end{proof}
}

\colblue{ 
Before we establish a convergence result from Theorem \ref{thm:inex_pd_basic}, respectively Corollary \ref{cor:ifo_basic_rate}, we want to comment on some of its particularities. 
Indeed, in most situations this result can be quite weak. 
Recall from \cite{Chambolle2011} that the key ingredient of the exact version of primal-dual algorithms is to have the inequality \eqref{eq:no_acc_Lagrangian} for all $(x,y) \in \X \times \Y$, instead of only for a saddle point $(\xs,\ys)$.
This allows, under some additional assumptions, to both state a rate for the primal and/or dual energy as well as the primal-dual gap and, for infinite dimensional $\X$ and $\Y$, that the cluster points of the ergodic averages $(X^N,Y^N)$ are saddle points and hence a solution to our initial problem.\\
Theorem \ref{thm:inex_pd_basic}, however, due to the necessary bound on the error terms, establishes the desired inequality only for a saddle point $(\xs,\ys)$, which does not allow taking the supremum over $x$ or $y$ on both sides in order to obtain rates for, e.g., the primal energy. 
Instead, Corollary \ref{cor:ifo_basic_rate} states a rate in a more degenerate distance, namely a Bregman distance \cite{Bregman1967,Osher2005}.
This can be seen by rewriting the left hand side of \eqref{eq:no_acc_Lagrangian}, adding $\langle K\xs,\ys \rangle - \langle \xs, K^*\ys \rangle$, 
\begin{align}\label{eq:ifo_bregman_dist}
 \Lcal (X^N,\ys) - \Lcal (\xs, Y^N)
 &= \langle KX^N, \ys \rangle + f(X^N) + g(X^N) - h^*(\ys) \nonumber \\
 &\qquad \quad - (\langle K\xs, Y^N \rangle + f(\xs) + g(\xs) - h^*(Y^N)) \nonumber \\
 &= f(X^N) + g(X^N) \nonumber \\ 
 &\qquad \quad - (f(\xs) + g(\xs) -\langle K^*\ys,X^N-\xs \rangle) \nonumber \\
 &\qquad \quad + h^*(y^N) - (h^*(\ys) + \langle K \xs, y^N - \ys \rangle ). 
\end{align}
Now, realizing that for the saddle point $(\xs,\ys)$ we actually have (by optimality) that 
\begin{align*}
 p = -K^* \ys \in \partial g(\xs) + \nabla f(\xs), \quad q = K\xs \in \partial h^*(\ys),
\end{align*}
we obtain that \eqref{eq:ifo_bregman_dist} is the sum of two Bregman distances, 
\begin{align*}
 \Lcal (X^N,\ys) - \Lcal (\xs, Y^N) = D_{f+g}^p (X^N,\xs) + D_{h^*}^q (Y^N,\ys),
\end{align*}
between the (ergodic) iterates $(X^N,Y^N)$ and the saddle point $(\xs,\ys)$.
Hence, Corollary \ref{cor:ifo_basic_rate} states the rate with respect to this distance. \\
Unfortunately, there exist situations in which a Bregman distance is a very weak measure of convergence. 
As shown in, e.g., \cite{Brinkmann2017}, a vanishing Bregman distance, e.g., 
\begin{align*}
 D_{f+g}^p (x,\xs) + D_{h^*}^q (y,\ys) = 0,
\end{align*}
for some $(x,y) \in \X \times \Y$, in general does not imply that $x = \xs$ or $y = \ys$, neither does it imply that the pair $(x,y)$ is even a saddle point. 
Instead, without any further assumptions on $f,g$ and $h^*$, the set of zeros of a Bregman distance can be arbitrarily large.
In other words, the left-hand side of the inequality in Corollary \ref{cor:ifo_basic_rate} can vanish even though we have {\it not} found a solution to our problem.\\
}
\colblue{
A particularly bad situation is a simple matrix game (cf. \cite{Chambolle2016a}), 
\begin{align*}
 \min_{x \in \Delta_l} \max_{y \in \Delta_k} ~ \langle Ax,y \rangle,  
\end{align*}
where $A \in \R^{k \times l}$ and $\Delta_l,\Delta_k$ denote the unit simplices in $\R^l$ respectively $\R^k$. 
Quite obviously, here we have $f = 0$, $g = \delta_{\Delta_l}$ and $h^* = \delta_{\Delta_k}$, such that we have to compute the Bregman distances with respect to a characteristic function, which can only be zero or infinity. 
Hence, every feasible point causes the Bregman distance to vanish such that a rate in this distance is of no use. 
However, there is a simple workaround in such cases, whenever the primal (or even the dual) variable are restricted to some bounded set $D$, such that $f$ and/or $g$ have bounded domain. 
\colblue{Note that this is a standard assumption also arising in similar works on the topic (e.g. \cite{Nemirovski2004}).}
As can be seen from the proof, one needs a bound on $\|x^n - \xs\|$ in order to control the errors. 
In this case one can estimate $\|x^n - \xs\| \leq \diam(D)$, and following the line of the proof (cf. inequality \eqref{eq:no_acc_proof_main_ineq}) we obtain for all $(x,y) \in \X \times \Y$ that
\begin{align*}
  \Lcal (X^N,y) - \Lcal (x, Y^N) \leq  \frac{1}{N} \bigg(\frac{\|x - x^0 \|^2}{2\tau} &+ \frac{\|y - y^0 \|^2}{2\sigma} \\
  &+  \frac{\diam(D)}{\tau}  A_{N,i} + \frac{1}{\tau} B_{N,i} \bigg).
\end{align*}
Eventually, this again allows deducing a rate for the primal-dual gap (e.g., along the lines in \cite{Chambolle2016a}).
\begin{remark}\textup{
 Even when such a workaround is not possible, there might exist situations where a rate in a Bregman distance is useful.
 For instance, the basis pursuit problem often aims at simply finding the support of the solution, instead of its quantitative values. 
 As shown in \cite{Brinkmann2017} a Bregman distance with respect to the $1$-norm can only vanish if the support of both given arguments agrees. 
 Hence, given a vanishing left-hand side in Corollary \ref{cor:ifo_basic_rate}, we might not have found a saddle point, however, an element with the same support such that our problem is solved.}
\end{remark}
As we have lined out, a rate in a Bregman distance can be difficult to interpret, and it depends on the particular situation whether it is useful or not. 
However, we can at least show the convergence of the iterates in case $\X$ and $\Y$ have finite dimension.
\begin{theorem}\label{thm:convergence_no_acc}
 Let $\X$ and $\Y$ be finite dimensional and let the sequences $(x^n,y^n)$ and $(X^N,Y^N)$ be defined by Theorem \ref{thm:inex_pd_basic}.
 If the partial sums $A_{N,i}$ and $B_{N,i}$ in Theorem \ref{thm:inex_pd_basic} converge, there exists a saddle point $(x^*,y^*) \in \X \times \Y$ such that $x^n \to x^*$ and $y^n \to y^*$.
\end{theorem}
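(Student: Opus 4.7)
My plan combines a quasi-F\'ejer monotonicity argument with the identification of a subsequential limit as a saddle point, following the pattern of the exact analysis in \cite{Chambolle2011}. I proceed in three steps.

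\textbf{Step 1: boundedness and vanishing increments.} I specialize the key inequality \eqref{eq:no_acc_proof_main_ineq} at a saddle point $(x^*,y^*)$; the Lagrangian sum on the left is nonnegative by the saddle property. Convergence of $A_{N,i}$ and $B_{N,i}$ together with the a priori bound on $\|x^n - x^*\|$ from \eqref{eq:no_acc_bounded_x} controls the error cross-terms on the right, so $(\|x^n - x^*\|)$ and $(\|y^n - y^*\|)$ remain bounded. Crucially, the strictly positive coefficients $\kappa$ and $\beta/(\sigma L + \beta)$ (here is where $\beta > 0$ pays off) give
\begin{align*}
 \sum_{n \geq 1} \|x^n - x^{n-1}\|^2 < \infty, \quad \sum_{n \geq 1} \|y^n - y^{n-1}\|^2 < \infty,
\end{align*}
so in particular $x^{n+1} - x^n \to 0$ and $y^{n+1} - y^n \to 0$.

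\textbf{Step 2: identification of a cluster point.} Since $\X\times\Y$ is finite-dimensional, Bolzano--Weierstrass yields a subsequence $(x^{n_k},y^{n_k}) \to (\bar x,\bar y)$. The type-$2$ update for $y^{n_k+1}$ and Lemma \ref{lem:existence_r} applied to the type-$i$ update for $x^{n_k+1}$ produce
\begin{align*}
 \tfrac{y^{n_k} - y^{n_k+1}}{\sigma} + K(2x^{n_k} - x^{n_k-1}) &\in \partial_{\delta_{n_k+1}} h^*(y^{n_k+1}), \\
 \tfrac{x^{n_k} - x^{n_k+1} - r^{n_k+1}}{\tau} - K^* y^{n_k+1} - \nabla f(x^{n_k}) - e^{n_k+1} &\in \partial_{\varepsilon_{n_k+1}} g(x^{n_k+1}),
\end{align*}
with $\|r^{n_k+1}\| \leq \sqrt{2\tau\varepsilon_{n_k+1}}$. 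Summability of $A_{N,i}, B_{N,i}$ forces $\|e^n\|, \varepsilon_n, \delta_n \to 0$; combining Step 1 with the continuity of $\nabla f$ and the closedness of the graph of the $\varepsilon$-subdifferential as $\varepsilon_n \to 0$, I pass to the limit and obtain $K\bar x \in \partial h^*(\bar y)$ and $-K^*\bar y - \nabla f(\bar x) \in \partial g(\bar x)$, so $(\bar x,\bar y)$ is a saddle point.

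\textbf{Step 3: upgrading to full convergence.} I reuse the per-iteration inequality \eqref{eq:no_acc_ineq} but sum it from an arbitrary starting index $M$ up to $N-1$ with the test point $(x,y)=(\bar x,\bar y)$. Since $(\bar x,\bar y)$ is a saddle point the Lagrangian sum remains nonnegative, the terminal boundary inner product is absorbed by the same Young manipulation used in the proof of Theorem \ref{thm:inex_pd_basic}, and the starting boundary inner product $-\langle K(x^M - x^{M-1}), y - y^M\rangle$ is bounded by a constant times $\|x^M - x^{M-1}\|$, which vanishes along Step 1. This yields a quasi-F\'ejer estimate
\begin{align*}
 \tfrac{\|\bar x - x^N\|^2}{2\tau} + c\,\tfrac{\|\bar y - y^N\|^2}{2\sigma} \leq \tfrac{\|\bar x - x^M\|^2}{2\tau} + \tfrac{\|\bar y - y^M\|^2}{2\sigma} + R_M, \quad N \geq M,
\end{align*}
with $c = 1 - \tau\sigma L^2/(1 - \tau L_f) > 0$ and $R_M$ the tail of the error sums beyond $M$ plus the vanishing boundary term. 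Choosing $M = n_k$ and letting $k \to \infty$, the right-hand side tends to zero uniformly in $N \geq n_k$, so $(x^N,y^N) \to (\bar x,\bar y)$.

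The main obstacle lies in Step 3: the non-F\'ejer coefficient $c < 1$ forces me to work with a weighted squared distance rather than the natural product-norm distance, and the boundary inner-product term appearing when the summation starts at $M > 0$ must be treated using the square-summability established in Step 1; the remaining ingredients are relatively standard combinations of the estimates already used to prove Theorem \ref{thm:inex_pd_basic}.
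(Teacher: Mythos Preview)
Your proposal is correct and follows the same three-step architecture as the paper (boundedness plus vanishing increments; identification of a cluster point as a saddle point; tail summation from an index along the subsequence to upgrade to full convergence). Steps~1 and~3 match the paper essentially verbatim, including the role of $\beta>0$ and the handling of the boundary inner product via Young's inequality.

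The only genuine difference is in Step~2. The paper argues at the level of the iteration map: it writes the exact update as $T$ and the inexact one as $T_{\e_n}$, uses continuity of $T$ together with the type-0 bound $\|T_{\e_n}(\cdot)-T(\cdot)\|\le\sqrt{2\tau\e_n}$ and the fact that $x^{n_k-1}\to x^*$, $x^{n_k}\to x^*$ to conclude $x^*=T(x^*)$ (and similarly for $y$), whence $(x^*,y^*)$ is a fixed point of the exact scheme and hence a saddle point. You instead pass to the limit directly in the optimality inclusions using the closedness of the graph of $\partial_{\varepsilon}g$ and $\partial_{\delta}h^*$ as the $\varepsilon$-levels shrink to zero. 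Both arguments are valid; yours is somewhat more transparent about which structural property is being used (graph closedness of the $\varepsilon$-subdifferential), while the paper's fixed-point formulation is shorter and avoids writing out the inclusions explicitly.
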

\begin{proof}
 Since by assumption $A_{N,i}$ and $B_{N,i}$ are summable, 
plugging $(\xs,\ys)$ into inequality \eqref{eq:no_acc_proof_main_ineq} and using \eqref{eq:no_acc_bound2} establishes the boundedness of the sequence $(x^n,y^n)$ for all $n \in \N$.
 Hence there exists a subsequence $(x^{n_k},y^{n_k})$ (strongly) converging to a cluster point $(x^*,y^*)$.
 Using $(x,y) = (\xs,\xs)$ in \eqref{eq:no_acc_proof_main_ineq} and the boundedness of the error terms established in \eqref{eq:no_acc_bound2} we also find that $\| x^{n-1} - x^n \| \to 0$ and $\|y^{n-1} - y^n \| \to 0$ (note that this is precisely the reason for the introduction of $\beta$ and the strict inequality in $\tau L_f + \tau \sigma L^2 + \tau \beta L < 1$).
 As a consequence we also have $\|x^{n_k-1} - x^{n_k} \|  \to 0$ and 
 \begin{align*}
  \| x^{n_k-1} - x^* \| \leq \|x^{n_k-1} - x^{n_k} \| + \|x^{n_k} - x^* \| \to 0, \quad k \to \infty,
 \end{align*}
 i.e. also $x^{n_k-1} \to x^*$.
 Let now $T$ denote the primal update of the exact algorithm \eqref{eq:pd_exact}, i.e. $\xh^{n+1} = T(\xh^n)$, and $T_{\e_n}$ denote the primal update of the inexact Algorithm \ref{eq:pd_general}, i.e. $x^{n+1} = T_{\e_n}(x^n)$.
 Then, due to the continuity of $T$, we obtain 
 \begin{align*}
  \| x^* - T(x^*) \| 
  &= \lim_{k \to \infty} \| x^{n_k-1} - T(x^{n_k-1}) \|  \\
  &\leq \lim_{k \to \infty} \left( \| x^{n_k-1} - T_{\e_{n_k}}(x^{n_k-1}) \| + \| T_{\e_{n_k}}(x^{n_k-1}) - T(x^{n_k-1}) \| \right)\\ 
  &\leq \lim_{k \to \infty} \left( \| x^{n_k-1} - x^{n_k} \| + \sqrt{2 \tau \e_{n_k}} \right) = 0.
 \end{align*}
 We apply the same argumentation to $y^n$, which together implies that $(x^*,y^*)$ is a fixed point of the (exact) iteration \ref{eq:pd_exact} and hence a saddle point of our original problem \eqref{eq:saddle_point_problem}.
 We now use $(x,y) = (x^*,y^*)$ in inequality \eqref{eq:no_acc_ineq} and sum from $n = n_k, \dots, N-1$ (leaving out negative terms on the right hand side) to obtain for $N > n_k$
 \begin{align*}
  \frac{1}{2\tau} \| x^* - x^N \|^2 &+ \frac{1}{2\sigma} \|y^* - y^N \|^2 \\ 
  &\leq \langle K(x^N - x^{N-1}),y^*-y^N \rangle - \langle K(x^{n_k} - x^{n_k-1}),y^* - y^{n_k} \rangle \\ 
  &+ \frac{\tau \sigma L^2 + \tau \beta L}{2\tau} \| x^{n_k} - x^{n_k-1} \|^2 + \frac{1}{2\tau} \| x^* - x^{n_k} \|^2 + \frac{1}{2\sigma} \| y^* - y^{n_k} \|^2 \\
  &+ \sum_{n = n_k+1}^N \left( \|e^n \| + \sqrt{(2 \e_n)/\tau} \right) \| x^* - x^n \| + \sum_{n = n_k+1}^N (\e_n + \delta_n).
 \end{align*}
 It remains to notice that since $\|e_n\| \to 0, \e_n \to 0, \delta_n \to 0$ and the above observations, the right hand side tends to zero for $k \to \infty$, which implies that also $x^N \to x^*$ and $y^N \to y^*$ for $N \to \infty$.
\end{proof}
}

\subsection{The convex case: a stronger version}\label{sec:stronger_version}

If we restrict ourselves to type-2 approximations, we can state a stronger version for the reduced problem with $f=0$:
\begin{align}\label{eq:saddle_point_problem_reduced}
 \min_{x \in \X} \max_{y \in \Y} ~ \Lcal(x,y) = \langle y,Kx \rangle + g(x) - h^*(y),
\end{align}
again assuming it has at least one saddle point $(\xs,\ys)$. 
We consider the algorithm
\begin{align} \label{eq:pd_general_prox_2}
  \begin{split}
   y^{n+1} &\approx_2^{\delta_{n+1}} \prox_{\sigma h^*} (y^n + \sigma K(2x^n - x^{n-1})), \\
   x^{n+1} &\approx_2^{\e_{n+1}} \prox_{\tau g} (x^n - \tau K^* y^{n+1})),   
  \end{split}
\end{align}
which is the inexact analog of the basic exact primal-dual algorithm presented in \cite{Chambolle2011}.
\colblue{
Simply speaking, the main difference to the previous section is that, choosing a type-2 approximation and $f = 0$, there are no errors occurring in the \emph{input} of the proximal operators, such that we do not need a bound on $\|x-x^n\|$, which allows us to obtain a rate for the objective for all $(x,y) \in \X \times \Y$ instead of only for a saddle point $(\xs,\ys)$ (cf. Theorem \ref{thm:inex_pd_basic}).
}
Following their line of proof, we can state the following result: 
\begin{theorem}\label{thm:inex_pd_basic_reduced}
 Let $L = \|K\|$ and $\tau, \sigma > 0$ such that $\tau \sigma L^2 < 1$, and let the sequence $(x^n,y^n)$ be defined by algorithm \eqref{eq:pd_general_prox_2}. 
 Then for $X^N := \left( \sum_{n=1}^N x^n \right)/N$, $Y^N := \left( \sum_{n=1}^N y^n \right)/N$
 and every $(x,y) \in \X \times \Y$ we have
 \begin{align}\label{eq:main_res_reduced}
  \Lcal(X^N,y) - \Lcal(x,Y^N) \leq \frac{1}{N} \left( \frac{1}{2\tau} \|x-x^0\|^2 + \frac{1}{2\sigma} \|y-y^0\|^2 + \sum_{n=1}^N (\e_n + \delta_n) \right).
 \end{align}
 Furthermore, if $\e_n = \Bigo{n^{-\alpha}}$ and $\delta_n = \Bigo{n^{-\alpha}}$, then
 \begin{align*}
  \Lcal(X^N,y) - \Lcal(x,Y^N) 
  = \begin{cases}  
   \Bigo{N^{-1}}, &\text{if} \quad \alpha > 1, \\
   \Bigo{\ln(N)/N}, &\text{if} \quad \alpha = 1, \\
   \Bigo{N^{-\alpha}}, &\text{if} \quad \alpha \in (0,1).
  \end{cases}
 \end{align*}
\end{theorem}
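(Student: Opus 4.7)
The plan is to specialize Lemma \ref{lem:general_inequality} to the present setting. Take $f = 0$ (so $L_f = 0$), $e = 0$, and type-2 approximations throughout (so the $\sqrt{2\e/\tau}$ term disappears), and insert the identifications $(\xb,\yb) = (x^n,y^n)$, $(\xc,\yc) = (x^{n+1},y^{n+1})$, $(\xt,\yt) = (2x^n - x^{n-1}, y^{n+1})$. Since $\yt = y^{n+1} = \yc$, the first scalar product in \eqref{eq:general_inequality} vanishes, and the second collapses to $\langle K((x^{n+1}-x^n)-(x^n-x^{n-1})), y - y^{n+1}\rangle$, producing a per-step inequality of precisely the form appearing in the proof of Theorem \ref{thm:inex_pd_basic}, except that the $(\|e\|+\sqrt{2\e/\tau})\|x-\xc\|$ term is replaced by nothing and only the additive $\e_{n+1}+\delta_{n+1}$ remain.

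Next I would follow the Chambolle--Pock telescoping argument. Split the cross term as
\begin{align*}
\langle K((x^{n+1}-x^n)-(x^n-x^{n-1})), y - y^{n+1}\rangle
&= \langle K(x^{n+1}-x^n), y - y^{n+1}\rangle \\
&\quad - \langle K(x^n - x^{n-1}), y - y^n\rangle \\
&\quad + \langle K(x^n-x^{n-1}), y^{n+1}-y^n\rangle,
\end{align*}
and bound the final residual with Young's inequality \eqref{eq:Young_inequality} with $\alpha = \sigma L$, so that the resulting coefficients on $\|x^n - x^{n-1}\|^2/(2\tau)$ and $\|y^{n+1}-y^n\|^2/(2\sigma)$ are $\tau\sigma L^2$ and $1$ respectively. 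Summing the inequality from $n = 0$ to $N-1$ with the convention $x^{-1} = x^0$, the $\|x - x^n\|^2/(2\tau)$ and $\|y-y^n\|^2/(2\sigma)$ terms telescope; the $\|y^{n+1}-y^n\|^2$ contributions cancel exactly; the $\|x^{n+1}-x^n\|^2$ terms combine to leave factors $-(1-\tau\sigma L^2)/(2\tau) \le 0$, which can be discarded; and a single boundary term $\langle K(x^N-x^{N-1}), y-y^N\rangle$ survives at the top of the sum and is absorbed by one more application of Young's inequality (this time with $\alpha = 1/(\tau L)$), drawing on the remaining slack in $\tau\sigma L^2 < 1$ to keep the coefficient on $\|y - y^N\|^2$ non-positive.

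Discarding the non-positive quadratic remnants yields
\begin{align*}
\sum_{n=1}^N \bigl[\Lcal(x^n,y) - \Lcal(x,y^n)\bigr] \le \frac{\|x-x^0\|^2}{2\tau} + \frac{\|y-y^0\|^2}{2\sigma} + \sum_{n=1}^N (\e_n+\delta_n).
\end{align*}
Convexity of $g$ and $h^*$ makes $\Lcal(\cdot,y)$ convex and $\Lcal(x,\cdot)$ concave, so Jensen's inequality applied to the ergodic averages gives $\Lcal(X^N,y) - \Lcal(x,Y^N) \le N^{-1}\sum_{n=1}^N[\Lcal(x^n,y)-\Lcal(x,y^n)]$, and dividing the displayed inequality by $N$ delivers \eqref{eq:main_res_reduced}. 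The asymptotic statement then follows by estimating $\sum_{n=1}^N n^{-\alpha}$ in the three regimes exactly as in the proof of Corollary \ref{cor:ifo_basic_rate} (invoking Lemma \ref{lem:behavior} for $\alpha \in (0,1)$).

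The argument is structurally simpler than that of Theorem \ref{thm:inex_pd_basic} because with $f = 0$ and type-2 errors there is no $\|x-x^n\|$ factor multiplying an error, so no auxiliary bound on the iterates (and no appeal to the technical Lemma \ref{lem:technical_result}) is needed; this is precisely why the inequality holds for every $(x,y)$ rather than only for saddle points. The main bookkeeping obstacle is tracking the boundary terms at $n = 0$ and $n = N$ and confirming that all coefficients remain non-negative under the single sharp step-size condition $\tau\sigma L^2 < 1$; once these cancellations are executed cleanly, no other subtlety arises.
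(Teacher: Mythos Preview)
Your proposal is correct and follows essentially the same approach as the paper, which explicitly says the proof proceeds along the lines of \cite[Theorem 1]{Chambolle2011} or the proof of Theorem~\ref{thm:inex_pd_basic}, dropping the $\|x-x^{n+1}\|$ error term thanks to $f=0$ and the type-2 approximation. The only cosmetic difference is that the paper picks the symmetric Young parameter $\alpha=\sqrt{\sigma/\tau}$ (yielding coefficients $1-\sqrt{\tau\sigma}L$ on both increment sums in~\eqref{eq:reduced_version_main}), whereas you use the asymmetric choices $\alpha=\sigma L$ and $\alpha=1/(\tau L)$ as in the proof of Theorem~\ref{thm:inex_pd_basic}; both are valid under $\tau\sigma L^2<1$ and lead to the same final bound.
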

\begin{proof}
 The proof can be done exactly along the lines of \cite[Theorem 1]{Chambolle2011} (or along the proof of Theorem \ref{thm:inex_pd_basic}), so we just give the main steps. 
 Letting $f = 0$ and choosing a type-2 approximation gives $L_f = 0$ and lets us drop the term $( \|e^{n+1}\| + \sqrt{(2 \varepsilon_{n+1})/\tau}) \| x - x^{n+1} \| $ in inequality \eqref{eq:basic_inequality_no_acc}.
 This is the essential difference, since we do not have to establish a bound on $\| x - x^{n+1} \|$.
 Choosing $\alpha = \sqrt{\sigma/\tau}$ in Young's inequality and proceeding as before the gives 
 \begin{align}\label{eq:reduced_version_main}
  \sum_{n=1}^N ( &\Lcal(x^n,y) - \Lcal(x,y^n) ) + (1-\tau \sigma L^2) \frac{\|y-y^N\|^2}{2\sigma} + \frac{\|x-x^N\|^2}{2\tau} \nonumber \\
  &+ (1- \sqrt{\tau \sigma} L) \sum_{n=1}^N \frac{\|y^n - y^{n-1}\|^2}{2\sigma} + (1- \sqrt{\tau \sigma} L) \sum_{n=1}^{N-1} \frac{\|x^n - x^{n-1}\|^2}{2\sigma} \nonumber \\
  &\qquad \leq \frac{1}{2\sigma} \|y-y^0\|^2 + \frac{1}{2\tau} \|x-x^0\|^2 + \sum_{n=1}^N (\e_n + \delta_n).
 \end{align}
 The definition of the ergodic sequences and Jensen's inequality yield the assertion.
\end{proof}
As before we can state convergence of the iterates if the errors $\{\e_n\}$ and $\{\delta_n\}$ decay fast enough.
The proof is the same as for Theorem \ref{thm:convergence_no_acc}.
\begin{theorem}
 Let the sequences $(x^n,y^n)$ and $(X^N,Y^N)$ be defined by \eqref{eq:pd_general_prox_2} respectively.
 If the sequences $\{\e_n\}$ and $\{\delta_n\}$ in Theorem \ref{thm:inex_pd_basic_reduced} are summable, then every weak cluster point $(x^*,y^*)$ of $(X^N,Y^N)$ is a saddle point of problem \eqref{eq:saddle_point_problem_reduced}.
 Moreover, if the dimension of $\X$ and $\Y$ is finite, there exists a saddle point $(x^*,y^*) \in \X \times \Y$ such that $x^n \to x^*$ and $y^n \to y^*$.
\end{theorem}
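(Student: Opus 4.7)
The plan is to mirror the proof of Theorem~\ref{thm:convergence_no_acc}, using the simplifications that follow from $f=0$ and type-2 approximations in both proxima.

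\textbf{Step 1 (boundedness and vanishing increments).} I plug the saddle point $(\xs,\ys)$ into the refined estimate \eqref{eq:reduced_version_main} rather than its averaged form \eqref{eq:main_res_reduced}. Since the summed Lagrangian gap is nonnegative at $(\xs,\ys)$ and the errors are summable, the right-hand side is a finite constant independent of $N$. Because $1 - \sqrt{\tau\sigma}L > 0$ under the assumption $\tau\sigma L^2 < 1$, this simultaneously yields boundedness of $(x^n, y^n)$ and the summability of $\|x^n - x^{n-1}\|^2$ and $\|y^n - y^{n-1}\|^2$; in particular $\|x^n - x^{n-1}\| \to 0$ and $\|y^n - y^{n-1}\| \to 0$.

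\textbf{Step 2 (weak cluster points of $(X^N, Y^N)$).} From \eqref{eq:main_res_reduced}, $\Lcal(X^N, y) - \Lcal(x, Y^N) \to 0$ as $N \to \infty$ for every $(x,y) \in \X \times \Y$. Given a weakly convergent subsequence $(X^{N_k}, Y^{N_k}) \rightharpoonup (x^*, y^*)$, the map $x \mapsto \Lcal(x, y)$ is convex and l.s.c.\ (hence weakly l.s.c.), while $y \mapsto \Lcal(x, y)$ is concave and u.s.c.\ (hence weakly u.s.c.). The inequality $\liminf A_k - \limsup B_k \leq \liminf(A_k - B_k)$ then yields, after passing to the limit,
\[
\Lcal(x^*, y) - \Lcal(x, y^*) \leq 0 \quad \text{for every } (x,y) \in \X \times \Y,
\]
which is precisely the saddle point inequality for $(x^*, y^*)$ in problem \eqref{eq:saddle_point_problem_reduced}.

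\textbf{Step 3 (iterate convergence in finite dimension).} By Step 1, the bounded sequence $(x^n, y^n)$ admits a convergent subsequence $(x^{n_k}, y^{n_k}) \to (x^*, y^*)$, and Step 1 also forces $(x^{n_k-1}, y^{n_k-1}) \to (x^*, y^*)$. Denoting by $T$ the exact update map of \eqref{eq:pd_exact} with $f=0$ and by $T_{\e,\delta}$ its type-2 inexact analog, Definition~\ref{def:type2} gives a bound of the order $\sqrt{2\sigma\delta_n} + \sqrt{2\tau\e_n}$ on the one-step discrepancy $\|T_{\e_n,\delta_n}(\cdot) - T(\cdot)\|$. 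Combining this with the triangle inequality and the continuity of $T$ along the subsequence $n_k$ shows that $(x^*, y^*)$ is a fixed point of $T$, hence a saddle point of \eqref{eq:saddle_point_problem_reduced}. Finally, summing the per-step version of \eqref{eq:reduced_version_main} from $n = n_k$ to $N-1$ with $(x,y) = (x^*, y^*)$ and letting $k \to \infty$ makes the remaining tail of $\sum(\e_n+\delta_n)$ vanish, forcing $\|x^N - x^*\| \to 0$ and $\|y^N - y^*\| \to 0$ along the full sequence.

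\textbf{Main obstacle.} The only step that is not a direct transcription from Theorem~\ref{thm:convergence_no_acc} is the weak cluster point argument in Step 2: the vanishing of the primal-dual gap $\Lcal(X^{N_k},y) - \Lcal(x, Y^{N_k})$ must be converted into the saddle point inequality at $(x^*, y^*)$ via the correct weak lower- and upper-semicontinuity of $\Lcal$ in each argument separately. Step 3 is then a routine adaptation of the finite-dimensional argument for Theorem~\ref{thm:convergence_no_acc}, streamlined by the absence of both the auxiliary parameter $\beta$ and the gradient-error term $e^n$.
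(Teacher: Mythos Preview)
Your proposal is correct and follows essentially the same route as the paper: boundedness via \eqref{eq:reduced_version_main} at the saddle point, the weak lower-/upper-semicontinuity argument on $\Lcal$ to pass \eqref{eq:main_res_reduced} to the limit along $(X^{N_k},Y^{N_k})$, and then the fixed-point plus tail-sum argument of Theorem~\ref{thm:convergence_no_acc} for the iterates. You are in fact slightly more explicit than the paper, which compresses your Steps~1 and~3 into ``follows analogously to the proof of Theorem~\ref{thm:convergence_no_acc}''; in particular, your observation that $1-\sqrt{\tau\sigma}L>0$ already delivers the vanishing increments without any auxiliary $\beta$ is exactly the simplification the type-2 setting affords.
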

\colblue{
\begin{proof}
  Since by assumption $A_{N,i}$ and $B_{N,i}$ are summable, plugging $(\xs,\ys)$ into equation \eqref{eq:reduced_version_main} establishes the boundedness of the sequence $x^N$ for all $N \in \N$, which also implies the boundedness of the ergodic average $X^N$.
 Note that by the same argumentation as for $x^N$, this also establish a global bound on $y^N$ and $Y^N$.
 Hence there exists a subsequence $(X^{N_k},Y^{N_k})$ weakly converging to a cluster point $(x^*, y^*)$.
 Then, since $f,g$ and $h^*$ are l.s.c. (thus also weakly l.s.c.), we deduce from equation \eqref{eq:main_res_reduced} that, for every fixed $(x,y) \in \X \times \Y$, 
 \begin{align*}
  \Lcal(x^*,y) - \Lcal(x,y^*) 
  &\leq \liminf_{k \to \infty} \Lcal (X^{N_k},y) - \Lcal(x,Y^{N_k}) \\
  &\leq \liminf_{k \to \infty} \frac{1}{N_k} \left( \frac{1}{2\tau} \|x-x^0\|^2 + \frac{1}{2\sigma} \|y-y^0\|^2 + \sum_{n=1}^N (\e_n + \delta_n) \right) \\
  &= 0, 
 \end{align*}
 Taking the supremum over $(x,y)$ then implies that $(x^*,y^*)$ is a saddle point itself and establishes the first assertion.
 The rest of the proof follows analogously to the proof of Theorem \ref{thm:convergence_no_acc}.
\end{proof}
}

\begin{remark}\label{rem:1}\textup{
The main difference between Theorem \ref{thm:inex_pd_basic_reduced} and Theorem \ref{thm:inex_pd_basic} is that inequality \eqref{eq:main_res_reduced} bounds the left hand side for all $x,y \in \X \times \Y$ and not only for a saddle point $(\xs,\ys)$. 
Following \cite[Remark 2]{Chambolle2011} and if $\{\e_n\}, \{\delta_n\}$ are summable we can state the same $\Bigo{1/N}$ convergence of the primal energy, dual energy or the global primal-dual gap under the additional assumption that $h$ has full domain, $g^*$ has full domain or both have full domain.
More precisely, if e.g. $h$ has full domain, then it is classical that $h^*$ is superlinear and that the supremum appearing in the conjugate is attained at some $\tilde{y}^N \in \partial h(KX^N)$, which is uniformly bounded in $N$ due to \eqref{eq:reduced_version_main} (if $(x,y) = (\xs,\ys)$ then $(X^N,Y^N)$ is globally bounded), such that 
\begin{align*}
 \max_{y \in \Y} ~ \Lcal(x^N,y) = ~ \langle \tilde{y}^N, KX^N \rangle -h^*(\yt^N) + g(X^N)  = h(KX^N) + g(X^N).
\end{align*}
Now evoking inequality \eqref{eq:main_res_reduced} and proceeding exactly along \eqref{eq:primal_estimate} we can state that
\begin{alignat*}{2}
 h(KX^N) + g(X^N) &- [h(K\xs) + g(\xs)] \\ 
 &\leq \frac{1}{N} \left( \frac{1}{2\tau} \|\xs-x^0\|^2 + C + \sum_{n=1}^N (\e_n + \delta_n) \right),
\end{alignat*}
with a constant $C$ depending on $x^0$, $y^0$, $h$ and $\|K\|$.
Analogously we can establish the convergence rates for the dual problem and also the global gap.
}
\end{remark}
\begin{remark}\label{rem:mixed_rates}\textup{
If $h^*$ has bounded domain, e.g. if $h$ is a norm, we can even state ``mixed'' rates for the primal energy if the errors are not summable. 
Since in this case $\|y - y^0 \| \leq \diam(\dom h^*)$ we may take the supremum over all
$y \in \dom h^*$ and obtain
\begin{align*}
 h(KX^N) &+ g(X^N) - [h(K\xs) + g(\xs)] \\ 
 &\leq \frac{1}{N} \left( \frac{\|\xs - x^0 \|^2}{2\tau}  + \frac{\diam(\dom h^*)^2}{2 \sigma} + \sum_{n=1}^N (\e_n + \delta_n) \right) = \Bigo{N^{-\alpha}},
\end{align*}
for $\e_n,\delta_n \in \Bigo{n^{-\alpha}}$.
The above result in particular holds for the aforementioned TV-$L^1$ model, which we shall consider in the numerical section.}
\end{remark}


\subsection{The strongly convex case: primal acceleration}
We now turn our focus on possible accelerations of the scheme and consider again the full problem \eqref{eq:saddle_point_problem} with the additional assumption that $g$ is $\gamma$-strongly convex, i.e. for any $x \in \dom \partial g$ 
\begin{align*}
 g(x') \geq g(x) + \langle p, x' - x \rangle + \frac{\gamma}{2} \|x'-x\|^2, \quad \forall p \in \partial g(x), \quad \forall x' \in \X.
\end{align*}
It is a known fact that if $g$ is $\gamma$-strongly convex, its conjugate $g^*$ has  $1/\gamma$-Lipschitz gradient, which guarantees the possibility to accelerate the algorithm.
We mention that we obtain the same result if $f$ (or both $g$ and $f$) are strongly convex, since it is possible to transfer the strong convexity from $f$ to $g$ and vice versa \cite[Section 5]{Chambolle2016}.
Hence for simplicity we focus on the case where $g$ is strongly convex.
Choosing 
\begin{align}
 (\xc,\yc) = (x^{n+1},y^{n+1}),\hspace{1pt} (\xb,\yb) = (x^n,y^n),\hspace{1pt} (\xt,\yt) = (x^n + \theta_n(x^n - x^{n-1}),y^{n+1}),
\end{align}
in algorithm \eqref{eq:pd_general} we define an accelerated inexact primal-dual algorithm: 
\begin{align}
 \begin{split}\label{eq:alg_primal_acc}
 y^{n+1} &\approx_2^{\delta_{n+1}} \prox_{\sigma_n h^*} (y^n + \sigma_n K (x^n + \theta_n (x^n - x^{n-1})) \\
 x^{n+1} &\approx_i^{\varepsilon_{n+1}} \prox_{\tau_n g} ( x^n - \tau_n (K^* y^{n+1} + \nabla f(x^n) + e^{n+1})) \\ 
 \theta_{n+1} &= 1 / \sqrt{1 + \gamma \tau_n}, \quad \tau_{n+1} = \theta_{n+1} \tau_n, \quad \sigma_{n+1} = \sigma_n / \theta_{n+1}.
 \end{split}
\end{align}
We prove the following theorem in Appendix \ref{subsec:inex_pd_acc2}.
\begin{theorem}\label{thm:inex_pd_acc2}
 Let $L = \|K\|$ and $\tau_n, \sigma_n, \theta_n$ such that 
 \begin{align*}
  \tau_n L_f + \tau_n \sigma_n L^2 \leq 1, 
  \qquad \theta_{n+1} \sigma_{n+1} = \sigma_n,
  \qquad (1 + \gamma \tau_n) \tau_{n+1} \theta_{n+1} \geq \tau_n. 
 \end{align*} 
 Let $(x^n,y^n) \in \X \times \Y$ be defined by the above algorithm for $i \in \{1,2,3\}$.
 Then for any saddle point $(\xs, \ys) \in \X \times \Y$ of \eqref{eq:saddle_point_problem} and
 \begin{align*}
  T_N := \sum_{n=1}^N \frac{\sigma_{n-1}}{\sigma_0}, \qquad X^N := \frac{1}{T_N} \sum_{n=1}^N \frac{\sigma_{n-1}}{\sigma_0} x^n, \qquad Y^N := \frac{1}{T_N} \sum_{n=1}^N \frac{\sigma_{n-1}}{\sigma_0} y^n, 
 \end{align*}
 we have that
 \begin{align*}
  T_N ( &\Lcal(X^N,\ys) - \Lcal(\xs, Y^N)) \\ 
  &\leq \frac{1}{2 \sigma_0} \left( \sqrt{\frac{\sigma_0}{\tau_0}} \| \xs - x^0 \| +  \| \ys - y^0 \| + \sqrt{2 B_{N,i}} + 2 \sqrt{\frac{\tau_N}{\sigma_N}} A_{N,i} \right)^2,  
  \intertext {and}
   \frac{\sigma_N}{2\tau_N} &\| \xs - x^N \|^2 \\
   &\leq \frac{1}{2} \left( \sqrt{\frac{\sigma_0}{\tau_0}} \| \xs - x^0 \| +  \| \ys - y^0 \| + \sqrt{2 B_{N,i}} + 2 \sqrt{\frac{\tau_N}{\sigma_N}} A_{N,i} \right)^2,
 \end{align*}
 where 
 \begin{alignat*}{3}
  A_ {N,1} &= \sum_{n=1}^N \sigma_{n-1} \|e^n\| + \sqrt{\frac{2 \sigma_{n-1}^2 \varepsilon_n}{\tau_{n-1}}}, 
  && \qquad B_{N,1} = 2 \sum_{n=1}^N \sigma_{n-1} (\varepsilon_n + \delta_n), \\
  A_{N,2} &= \sum_{n=1}^N \sigma_{n-1} \|e^n\|,
  && \qquad B_{N,2} = 2 \sum_{n=1}^N \sigma_{n-1} (\varepsilon_n + \delta_n), \\ 
  A_{N,3} &= \sum_{n=1}^N \sigma_{n-1} \|e^n\| + \sqrt{\frac{2 \sigma_{n-1}^2 \varepsilon_n}{\tau_{n-1}}},
  && \qquad B_{N,3} = 2 \sum_{n=1}^N \sigma_{n-1} \delta_n.
 \end{alignat*}
\end{theorem}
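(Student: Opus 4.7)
The plan is to follow the same template used for Theorem \ref{thm:inex_pd_basic} but with three new ingredients: the strong convexity of $g$ producing an extra quadratic term, the time-dependent steps $\tau_n,\sigma_n,\theta_n$ producing the telescoping, and the weights $\sigma_{n-1}/\sigma_0$ used in the ergodic averages.

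First, I would start from Lemma \ref{lem:general_inequality} applied with the choices above, but strengthened via the $\gamma$-strong convexity of $g$: in the derivation of \eqref{eq:primal_inequality} the subdifferential inequality can be replaced by $g(\xc)\le g(x)+\langle(\xb-\tau_n(K^*\yt+\nabla f(\xb)+e)-\xc-r)/\tau_n,\xc-x\rangle - \tfrac{\gamma}{2}\|x-\xc\|^2+\e$, which produces an additional $-\tfrac{\gamma}{2}\|x-x^{n+1}\|^2$ on the right-hand side of \eqref{eq:general_inequality}. Plugging in the present choice $\yt=y^{n+1}=\yc$ kills the term $\langle K(x-\xc),\yt-\yc\rangle$, while $\xt-\xc=\theta_n(x^n-x^{n-1})-(x^{n+1}-x^n)$, so the remaining cross term becomes
\begin{equation*}
-\theta_n\langle K(x^n-x^{n-1}),y-y^{n+1}\rangle+\langle K(x^{n+1}-x^n),y-y^{n+1}\rangle,
\end{equation*}
which is the classical structure needed to telescope after weighting. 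As in the basic case I would split the first of these using Young's inequality to absorb the $\|x^n-x^{n-1}\|^2$ against a term coming from the previous step and absorb the $\|y^{n+1}-y^n\|^2$ against the dual negative term, and leave the second as is for telescoping.

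Next, multiply the resulting inequality at iteration $n$ by $\sigma_n$ and use the three prescribed relations $\tau_nL_f+\tau_n\sigma_nL^2\le 1$, $\theta_{n+1}\sigma_{n+1}=\sigma_n$, and $(1+\gamma\tau_n)\tau_{n+1}\theta_{n+1}\ge\tau_n$. The second relation makes the cross term telescope when summed, while the third is precisely what is needed so that $\sigma_n\cdot\tfrac{1+\gamma\tau_n}{2\tau_n}\ge\sigma_{n+1}\cdot\tfrac{1}{2\tau_{n+1}}$, allowing the weighted quadratic $\tfrac{\sigma_{n-1}}{2\tau_{n-1}}\|x-x^n\|^2$ to ``survive'' from one step to the next (the extra $\tfrac{\gamma}{2}\|x-x^{n+1}\|^2$ is what feeds the acceleration). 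After summing $n=0,\dots,N-1$, and using $(x,y)=(\xs,\ys)$ so that the left-hand side is nonnegative, one obtains the key inequality
\begin{equation*}
\frac{\sigma_N}{2\tau_N}\|\xs-x^N\|^2+\tfrac12\|\ys-y^N\|^2+T_N(\Lcal(X^N,\ys)-\Lcal(\xs,Y^N))\le \Delta_0+\sum_{n=1}^N\sigma_{n-1}(\text{errors})_n,
\end{equation*}
where $\Delta_0=\tfrac{\sigma_0}{2\tau_0}\|\xs-x^0\|^2+\tfrac12\|\ys-y^0\|^2$, after rescaling everything by $1/\sigma_0$ and using Jensen's inequality with weights $\sigma_{n-1}/\sigma_0$ on the $\Lcal$ part.

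The main obstacle is, as before, controlling the error terms of the form $\sigma_{n-1}\bigl(\|e^n\|+\sqrt{2\e_n/\tau_{n-1}}\bigr)\|\xs-x^n\|$ coming from Lemma \ref{lem:general_inequality}. For this I would first drop the non-negative left-hand terms except $\tfrac{\sigma_N}{2\tau_N}\|\xs-x^N\|^2$, multiply by $2\tau_N/\sigma_N$, and then apply the technical Lemma \ref{lem:technical_result} of Schmidt in the spirit of the proof of Theorem \ref{thm:inex_pd_basic}: with $u_N=\sqrt{\sigma_N/\tau_N}\|\xs-x^N\|$, $\lambda_n=2\sqrt{\sigma_{n-1}^2/(\tau_{n-1}\sigma_N/\tau_N)}\bigl(\tau_{n-1}\|e^n\|/\sigma_{n-1}\cdot\sqrt{\sigma_{n-1}}+\sqrt{2\tau_{n-1}\e_n}\bigr)$, and $S_N$ gathering $\Delta_0$ and $\sum\sigma_{n-1}(\e_n+\delta_n)$ (modulo some $\sqrt{\tau_N/\sigma_N}$ bookkeeping to match $A_{N,i},B_{N,i}$), this yields a bound of the form $u_N\le A+\sqrt{2\Delta_0+2B+A^2}$ on $\sqrt{\sigma_N/\tau_N}\|\xs-x^N\|$ and hence a uniform bound on $\|\xs-x^n\|$ for every $n\le N$ by monotonicity of $A_{N,i},B_{N,i}$. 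Substituting back into the main inequality and simplifying by the elementary estimate $(a+\sqrt{a^2+b^2})^2/2\le (|a|+|b|)^2$ (or rather the square expansion used in \eqref{eq:no_acc_bound2}) gives exactly the two stated bounds, with the type-$i$ case for $\xc$ controlling whether the $\sqrt{2\sigma_{n-1}^2\e_n/\tau_{n-1}}$ term and/or the $\sigma_{n-1}\e_n$ term appears in $A_{N,i}$ and $B_{N,i}$, as in Theorem \ref{thm:inex_pd_basic}.
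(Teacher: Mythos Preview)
Your proposal is correct and follows essentially the same route as the paper's proof: start from Lemma~\ref{lem:general_inequality} augmented by the $\gamma$-strong convexity of $g$, use the three step-size relations to telescope the weighted quadratic terms $\tfrac{\sigma_n}{2\tau_n}\|x-x^n\|^2$ and the cross terms $\sigma_n\langle K(x^{n+1}-x^n),y^{n+1}-y\rangle$, sum after multiplying by $\sigma_n/\sigma_0$, and then invoke Lemma~\ref{lem:technical_result} at the saddle point to control $\|x^\star-x^n\|$ before substituting back. The only cosmetic difference is that the paper applies the technical lemma directly to $u_N=\|x^\star-x^N\|$ after isolating it (carrying the factor $\eta_N=\tau_N/\sigma_N$ on the right-hand side), whereas you scale first and set $u_N=\sqrt{\sigma_N/\tau_N}\|x^\star-x^N\|$; both lead to the same bound once you use that $\eta_n=\tau_n/\sigma_n$ is nonincreasing and $A_N,B_N$ are nondecreasing.
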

As a direct consequence of Theorem \ref{thm:inex_pd_acc2} we can state convergence rates of the accelerated algorithm \eqref{eq:alg_primal_acc} in dependence on the errors $\{\|e^n\|\}, \{\delta_n\}$ and $\{\e_n\}$.
\begin{corollary}
 Let $\tau_0 = 1/(2L_f)$, $\sigma_0 = L_f/L^2$ and $\tau_n,\sigma_n$ and $\theta_n$ be given by \eqref{eq:alg_primal_acc}. 
 Let $\alpha > 0$, $i \in \{1,2,3\}$ and 
 \begin{align*}
  \|e^n\| = \Bigo{\frac{1}{n^\alpha}},
  \qquad \delta_n = \Bigo{\frac{1}{n^{2\alpha}}}, 
  \qquad \e_n = \begin{cases}
		  \Bigo{\frac{1}{n^{1+2\alpha}}}, &\text{if } i \in \{1,3\} \\ 
		  \Bigo{\frac{1}{n^{2\alpha}}}, &\text{if } i =2.
	       \end{cases}
 \end{align*}
 Then 
 \begin{align*}
  \Lcal(X^N,\ys) - \Lcal(\xs,Y^N) = 
  \begin{cases}
   \Bigo{N^{-2}} & \text{if} \quad \alpha > 1, \\
   \Bigo{\ln^2(N)/N^2} & \text{if} \quad \alpha = 1, \\
   \Bigo{N^{-2\alpha}} & \text{if} \quad \alpha \in (0,1). \\
  \end{cases}
 \end{align*}
\end{corollary}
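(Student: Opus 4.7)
The plan is to apply Theorem \ref{thm:inex_pd_acc2} directly, reducing the corollary to estimating the growth of $T_N$, $\sqrt{\tau_N/\sigma_N}$, $A_{N,i}$ and $B_{N,i}$ under the stated error decay. All the heavy lifting was done in the theorem; what remains is routine asymptotic analysis of the parameter sequences and the error sums, combined with a case distinction on $\alpha$.

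First I would record the standard asymptotic behaviour of the step sizes generated by the update $\theta_{n+1} = 1/\sqrt{1+\gamma \tau_n}$, $\tau_{n+1} = \theta_{n+1}\tau_n$, $\sigma_{n+1} = \sigma_n/\theta_{n+1}$. The identity $\tau_n\sigma_n = \tau_0\sigma_0$ is preserved along iterations, and a straightforward induction (as in \cite{Chambolle2011,Chambolle2016a}) gives $\tau_n = \Theta(1/n)$ and $\sigma_n = \Theta(n)$ as $n \to \infty$. Hence $T_N = \sum_{n=1}^N \sigma_{n-1}/\sigma_0 = \Theta(N^2)$ and $\sqrt{\tau_N/\sigma_N} = \sqrt{\tau_0\sigma_0}/\sigma_N = \Theta(1/N)$. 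These are the only facts about the parameter sequences needed in the sequel.

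Next I would insert the prescribed error decays into the definitions of $A_{N,i}$ and $B_{N,i}$. Using $\sigma_{n-1} = \Theta(n)$ and $\tau_{n-1} = \Theta(1/n)$, one finds termwise
\[
 \sigma_{n-1}\|e^n\| = O(n^{1-\alpha}), \qquad \sqrt{\sigma_{n-1}^2\varepsilon_n/\tau_{n-1}} = O(n^{1-\alpha}) \quad (i\in\{1,3\}),
\]
\[
 \sigma_{n-1}\varepsilon_n = O(n^{1-2\alpha}) \quad (i=2), \qquad \sigma_{n-1}\delta_n = O(n^{1-2\alpha}),
\]
while the remaining $\sigma_{n-1}\varepsilon_n$ contributions for $i\in\{1,3\}$ decay as $O(n^{-2\alpha})$ and are dominated. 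Summing via $\sum_{n=1}^N n^{\beta} = O(1),\; O(\ln N),\; O(N^{\beta+1})$ according to $\beta<-1$, $\beta=-1$, $\beta>-1$ yields, uniformly in $i\in\{1,2,3\}$,
\[
 A_{N,i} = \begin{cases} O(1) & \alpha > 2,\\ O(\ln N) & \alpha = 2,\\ O(N^{2-\alpha}) & 0<\alpha<2, \end{cases} \qquad B_{N,i} = \begin{cases} O(1) & \alpha>1,\\ O(\ln N) & \alpha=1,\\ O(N^{2-2\alpha}) & 0<\alpha<1. \end{cases}
\]

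Finally, I would plug these into the bound of Theorem \ref{thm:inex_pd_acc2}. Using $(a+b+c+d)^2 \le 4(a^2+b^2+c^2+d^2)$ on the right hand side and dividing by $T_N = \Theta(N^2)$ gives
\[
 \Lcal(X^N,\ys) - \Lcal(\xs,Y^N) = O\!\left( \frac{1}{N^2} + \frac{B_{N,i}}{N^2} + \frac{\tau_N A_{N,i}^2}{\sigma_N N^2} \right) = O\!\left( \frac{1}{N^2} + \frac{B_{N,i}}{N^2} + \frac{A_{N,i}^2}{N^4} \right).
\]
A short case split on $\alpha$ then produces the three regimes: for $\alpha>1$ both sums are $O(1)$ (or $O(\ln N)$ times a sufficiently small power of $N^{-1}$) and the bound collapses to $O(N^{-2})$; for $\alpha=1$, $B_{N,i}$ contributes $O(\ln N/N^2)$ and $A_{N,i}^2/N^4$ contributes up to $O(\ln^2 N/N^2)$, which dominates; for $\alpha\in(0,1)$ the dominant term is $A_{N,i}^2/N^4 = O(N^{-2\alpha})$, matching $B_{N,i}/N^2 = O(N^{-2\alpha})$. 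The only mildly delicate step is confirming that the $A_{N,i}^2/N^4$ contribution indeed dominates at the threshold $\alpha = 1$ (giving the $\ln^2$ rather than $\ln$); everything else is bookkeeping.
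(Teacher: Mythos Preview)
Your approach is the same as the paper's: invoke Theorem \ref{thm:inex_pd_acc2}, use the standard asymptotics $\tau_n = \Theta(1/n)$, $\sigma_n = \Theta(n)$, $T_N = \Theta(N^2)$, $\sqrt{\tau_N/\sigma_N} = \Theta(1/N)$, estimate $A_{N,i}$ and $B_{N,i}$, and split cases on $\alpha$. Your growth estimates for $A_{N,i}$ and $B_{N,i}$ agree with the paper's, and the cases $\alpha>1$ and $\alpha\in(0,1)$ are handled correctly.

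There is a slip at $\alpha=1$. By your own formula $A_{N,i} = O(N^{2-\alpha})$ for $0<\alpha<2$, at $\alpha=1$ you get $A_{N,i} = O(N)$, hence $A_{N,i}^2/N^4 = O(N^{-2})$, not $O(\ln^2 N/N^2)$ as you write. The dominant error contribution at $\alpha=1$ is therefore $B_{N,i}/N^2 = O(\ln N/N^2)$, not the $A$-term. The paper's proof records exactly this: $\sqrt{\tau_N/\sigma_N}\,A_{N,1}$ stays bounded while $B_{N,1} = O(\ln N)$. The stated corollary bound $O(\ln^2 N/N^2)$ is thus slightly loose at this threshold --- the argument actually yields $O(\ln N/N^2)$ --- so your conclusion remains valid, but your identification of which term produces the logarithm (and how many powers of it) is incorrect.
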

\begin{proof}
 In \cite{Chambolle2011} it has been shown that with this choice we have $\tau_n \sim 2/(n\gamma)$. 
 Since the product $\tau_n \sigma_n = \tau_0 \sigma_0 = 1/(2L^2)$ stays constant over the course of the iterations, this implies that $\sigma_n \sim (n\gamma)/(4L^2)$, from which we directly deduce that $T_N \sim (\gamma N^2)/(8L_f)$, hence $T_N = \Bigo{N^2}$.
 Moreover we find that $\sqrt{\tau_N/\sigma_N} \sim (\sqrt{8}L)/(\gamma N)$.
 Now let $i = 1$ and $\alpha \in (0,1)$, then we have
 \begin{align*}
  A_{N,1} = \sum_{n=1}^N \sigma_{n-1} \|e^n\| + \sqrt{\frac{2 \sigma_{n-1}^2 \varepsilon_n}{\tau_{n-1}}} 
  \sim \sum_{n=1}^N \frac{(n-1)\gamma}{4L^2} \|e^n\| + \sqrt{\frac{2\gamma^3((n-1)^3 \e_n)}{32L^4}}
 \end{align*}
 Now by assumption $\|e^n\| = \Bigo{n^{-\alpha}}$ and $\e_n = \Bigo{n^{-(1+2\alpha)}}$ which implies that $A_{N,1} = \Bigo{N^{2-\alpha}}$.
 By analogous reasoning we find $B_{N,1} = \Bigo{N^{2-2\alpha}}$. 
 Summing up we obtain that 
 \begin{align*}
  \frac{\tau_N}{\sigma_N} \frac{A_{N,1}^2}{T_N} = \Bigo{N^{-2\alpha}}, \qquad \text{and} \qquad \frac{B_{N,1}}{T_N} = \Bigo{N^{-2\alpha}}, 
 \end{align*}
 yielding the last row of the assertion.
 For $\alpha = 1$ we see that $\sqrt{\tau_N/\sigma_N} A_{N,1}$ is finite and $B_{N,1} = \Bigo{\log(N)}$, for $\alpha > 1$ also $B_{N,1}$ is summable, implying the other two rates. 
 It remains to notice that the cases $i \in \{2,3\}$ can be obtained as special cases.
\end{proof}

\subsection{The strongly convex case: dual acceleration}\label{sec:dual_acceleration}
This section is devoted to the comparison of inexact primal-dual algorithms and inexact forward-backward splittings established in \cite{Schmidt2011,Villa2013,Aujol2015}, considering the problem 
\begin{align}\label{eq:fista_problem}
 \min_{x \in \X} ~ h(Kx) + g(x),
\end{align}
with $h$ having a $1/\gamma$-Lipschitz gradient and proximable $g$. 
The above mentioned works establish convergence rates for an inexact forward-backward splitting  on this problem, where both the computation of the proximal operator with respect to $g$ and the gradient of $h$ might contain errors (\cite{Villa2013} only considers errors in the proximum).

The corresponding primal-dual formulation of problem \eqref{eq:fista_problem} reads 
\begin{align}\label{eq:saddle_point_problem2}
 \min_{x \in \X} \max_{y \in \Y} ~ \Lcal(x,y) = \langle Kx, y \rangle + g(x) - h^*(y),
\end{align}
where now $h^*$ is $\gamma$-strongly convex. 
%
Hence we know that the algorithm can be accelerated ``\`a la'' \cite{Chambolle2011,Chambolle2016} or as in the previous section, and we shall be able to essentially recover the results on inexact forward-backward splittings/inexact FISTA obtained by \cite{Schmidt2011,Villa2013,Aujol2015}.
Choosing (note $f = 0$ and $e = 0$)
\begin{align}
 (\xc,\yc) = (x^{n+1},y^{n+1}), \hspace{2pt} (\xb,\yb) = (x^n,y^n), \hspace{2pt} (\xt,\yt) = (x^n + \theta_n(x^n - x^{n-1}),y^{n+1}),
\end{align}
in algorithm \eqref{eq:pd_general} we define an accelerated inexact primal-dual algorithm: 
\begin{align*}
 y^{n+1} &\approx_2^{\delta_{n+1}} \prox_{\sigma_n h^*} (y^n + \sigma_n K (x^n + \theta_n (x^n - x^{n-1})) \\
 x^{n+1} &\approx_i^{\varepsilon_{n+1}} \prox_{\tau_n g} ( x^n - \tau_n K^* y^{n+1} ) \\ 
 \theta_{n+1} &= 1 / \sqrt{1 + 2\gamma \sigma_n}, \quad \sigma_{n+1} = \theta_{n+1} \sigma_n, \quad \tau_{n+1} = \tau_n / \theta_{n+1}.
\end{align*}
We prove the following theorem in Appendix \ref{subsec:inex_pd_acc}.
\begin{theorem}\label{thm:inex_pd_acc}
 Let $L = \|K\|$ and $\tau_n, \sigma_n, \theta_n$ such that 
 \begin{align*}
  \tau_n \sigma_n \theta_n^2 L^2 \leq 1, 
  \qquad \theta_{n+1} \tau_{n+1} = \tau_n,
  \qquad (1 + \gamma \sigma_n) \sigma_{n+1} \theta_{n+1} \geq \sigma_n. 
 \end{align*} 
 Let $(x^n,y^n) \in \X \times \Y$ be defined by the above algorithm for $i \in \{1,2,3\}$.
 Then for a saddle point $(\xs, \ys) \in \X \times \Y$ and
 \begin{align*}
  T_N := \sum_{n=1}^N \frac{\tau_{n-1}}{\tau_0}, \qquad X^N := \frac{1}{T_N} \sum_{n=1}^N \frac{\tau_{n-1}}{\tau_0} x^n, \qquad Y^N := \frac{1}{T_N} \sum_{n=1}^N \frac{\tau_{n-1}}{\tau_0} y^n,
 \end{align*}
 we have that
 \begin{align*}
  T_N ( \Lcal(X^N,\ys) &- \Lcal(\xs, Y^N)) \\
  \leq \frac{1}{2 \tau_0} &\left( \| \xs - x^0 \| + \sqrt{\frac{\tau_0}{\sigma_0}} \| \ys - y^0 \| + \sqrt{2 B_{N,i}} + 2A_{N,i} \right)^2,  
  \intertext{and}
   C_N \frac{\tau_N}{\sigma_N} \|\ys - y^N \|^2
   &\leq \left( \| \xs - x^0 \| + \sqrt{\frac{\tau_0}{\sigma_0}} \| \ys - y^0 \| + \sqrt{2 B_{N,i}} + 2A_{N,i} \right)^2,  
 \end{align*}
 where $C_N = 1 - \sigma_N \tau_N \theta_N^2 L^2$ and
 \begin{alignat*}{3}
  A_{N,1} &= \sum_{n=1}^N \sqrt{2 \tau_{n-1} \varepsilon_n},
  && \quad B_{N,1} = 2 \sum_{n=1}^N \tau_{n-1} (\varepsilon_n + \delta_n), \\
  A_{N,2} &= 0, 
  && \quad B_{N,2} = 2 \sum_{n=1}^N \tau_{n-1} (\varepsilon_n + \delta_n), \\
  \quad A_{N,3} &= \sum_{n=1}^N \sqrt{2 \tau_{n-1} \varepsilon_n},
  &&\quad B_{N,3} = 2 \sum_{n=1}^N \tau_{n-1} \delta_n.
 \end{alignat*}
\end{theorem}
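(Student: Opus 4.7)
The plan is to mirror the proof of Theorem~\ref{thm:inex_pd_acc2}, but with the roles of the primal and dual effectively interchanged: because $h^*$ is $\gamma$-strongly convex, $\sigma_n$ is now the shrinking step-size and $\tau_n$ the growing one. I would start by specializing Lemma~\ref{lem:general_inequality} to $f=0$, $e=0$, the given choices of $(\xc,\yc), (\xb,\yb), (\xt,\yt)$ and parameters $(\tau_n,\sigma_n)$. Rederiving inequality~\eqref{eq:dual_inequality} under the $\gamma$-strong convexity of $h^*$ gains a $-\frac{\gamma}{2}\|y-y^{n+1}\|^2$ contribution, so that the coefficient of $\|y-y^{n+1}\|^2$ on the right-hand side becomes $-\frac{1+\gamma\sigma_n}{2\sigma_n}$. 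The extrapolation cross-term $\langle K(\xt-\xc),y-\yc\rangle$ is then split, as in \cite{Chambolle2011}, into the telescoping pair $\langle K(x^n-x^{n+1}),y-y^{n+1}\rangle - \theta_n\langle K(x^{n-1}-x^n),y-y^n\rangle$ plus a residual $\theta_n\langle K(x^{n-1}-x^n),y^{n+1}-y^n\rangle$, which is controlled via Young's inequality with a parameter tuned to activate the step-size condition $\tau_n\sigma_n\theta_n^2L^2\leq 1$.

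Next, I would multiply the resulting inequality by the weight $w_n := \tau_{n-1}/\tau_0$ and sum over $n=1,\dots,N$. The identity $\theta_{n+1}\tau_{n+1}=\tau_n$ ensures that the weighted primal quadratic $\frac{w_n}{2\tau_n}\|x-x^{n+1}\|^2$ matches $\frac{w_{n+1}}{2\tau_{n+1}}\|x-x^{n+1}\|^2$ in the subsequent iteration, producing perfect telescoping; the inequality $(1+\gamma\sigma_n)\sigma_{n+1}\theta_{n+1}\geq \sigma_n$ is designed to play the analogous role for the weighted dual quadratic $\frac{w_n(1+\gamma\sigma_n)}{2\sigma_n}\|y-y^{n+1}\|^2$. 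The endpoint cross term $\langle K(x^N-x^{N-1}),y-y^N\rangle$ is absorbed by a final Young inequality tuned to the sharp constant $\tau_N\sigma_N\theta_N^2L^2\leq 1$, which leaves the factor $C_N=1-\tau_N\sigma_N\theta_N^2L^2$ in front of the $\|\ys-y^N\|^2$ term appearing in the second conclusion.

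Evaluating at $(x,y)=(\xs,\ys)$ reduces everything to controlling the remaining weighted error terms. For $i\in\{1,3\}$ these take the form $\sum_n\sqrt{2\tau_{n-1}\e_n}\,\|\xs-x^n\|$ (after the weight $\tau_{n-1}/\tau_0$ cancels the $\sqrt{2\e_n/\tau_{n-1}}$ inside Lemma~\ref{lem:general_inequality}), plus the weighted $\e_n+\delta_n$ contributions collected in $B_{N,i}$. Because the weighted telescoping leaves a clean coefficient $1/(2\tau_0)$ in front of $\|\xs-x^N\|^2$ on the left, Lemma~\ref{lem:technical_result} applied with $u_n=\|\xs-x^n\|$, $\lambda_n=\sqrt{2\tau_{n-1}\e_n}$ and $S_N$ collecting the initialization and $B_{N,i}$ yields a uniform bound on $\|\xs-x^n\|$ for all $n\leq N$, which is substituted back to close the estimate. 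Dividing by $T_N$ and applying Jensen's inequality to the convex map $(x,y)\mapsto\Lcal(x,\ys)-\Lcal(\xs,y)$ on the ergodic averages gives the first bound; isolating the coefficient of $\|\ys-y^N\|^2$ before dividing gives the second.

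The main obstacle is checking that the three structural conditions on $\tau_n,\sigma_n,\theta_n$ simultaneously enforce telescoping of both the primal and dual weighted quadratic parts and dominate the extrapolation residual: unlike in Section~\ref{sec:stronger_version}, the product $\tau_n\sigma_n$ is no longer constant, so the bookkeeping is asymmetric and each coefficient must be propagated carefully through the rescaling. A secondary subtlety is the correct application of Lemma~\ref{lem:technical_result} with non-uniform weights; the telescoping must be carried out \emph{before} invoking Schmidt's lemma so that the residual quadratic coefficient on the left-hand side is independent of $n$, which is what allows the recursion $u_N^2\leq S_N+\sum\lambda_n u_n$ to be set up without spurious $n$-dependent factors.
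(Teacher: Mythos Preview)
Your proposal is correct and follows essentially the same route as the paper's proof: specialize Lemma~\ref{lem:general_inequality} with $f=0$, $e=0$ and the $\gamma$-strong convexity of $h^*$, split the extrapolation term and bound the residual by Young's inequality with the parameter chosen so that $\tau_n\sigma_n\theta_n^2L^2\le 1$ is activated, multiply by $\tau_n/\tau_0$ and telescope using the two step-size relations, absorb the endpoint cross term (producing $C_N$), then apply Lemma~\ref{lem:technical_result} to bound $\|x^\star-x^n\|$ uniformly and close with Jensen's inequality. The only minor slip is cosmetic: your $\lambda_n$ in Lemma~\ref{lem:technical_result} should carry an extra factor of $2$ after clearing the $1/(2\tau_0)$ coefficient, and your remark that $\tau_n\sigma_n$ is not constant is irrelevant to the argument (and in fact false for the concrete update rule given before the theorem, where $\tau_n\sigma_n=\tau_0\sigma_0$).
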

We can once more establish convergence rates depending on the decay of the errors.
\begin{corollary}\label{cor:fista_type}
 Let $\tau_0,\sigma_0$ such that $\tau_0 \sigma_0 L^2 = 1$. 
 Let $\alpha > 0$, $i \in \{1,2,3\}$ and 
 \begin{align*}
  \delta_n = \Bigo{\frac{1}{n^{2\alpha}}}, 
  \qquad \e_n = \begin{cases}
		  \Bigo{\frac{1}{n^{1+2\alpha}}}, &\text{if } i \in \{1,3\} \\ 
		  \Bigo{\frac{1}{n^{2\alpha}}}, &\text{if } i =2.
	       \end{cases}
 \end{align*}
 Then 
 \begin{align*}
  \Lcal(X^N,\ys) - \Lcal(\xs,Y^N) = 
  \begin{cases}
   \Bigo{N^{-2}} & \text{if} \quad \alpha > 1, \\
   \Bigo{\ln^2(N)/N^2} & \text{if} \quad \alpha = 1, \\
   \Bigo{N^{-2\alpha}} & \text{if} \quad \alpha \in (0,1). \\
  \end{cases}
 \end{align*}
\end{corollary}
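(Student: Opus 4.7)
The plan is to follow the same template as Corollary \ref{cor:ifo_basic_rate} and the analogous corollary for primal acceleration: reduce to Theorem \ref{thm:inex_pd_acc} and compute the asymptotic order of the remainder terms $A_{N,i}$, $B_{N,i}$ under the prescribed decay of $\e_n$ and $\delta_n$.

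First I establish the asymptotic scaling of the step sizes. The Nesterov-type recursion $\theta_n = 1/\sqrt{1+2\gamma\sigma_{n-1}}$, $\sigma_n = \theta_n\sigma_{n-1}$ is classical and, as in \cite{Chambolle2011}, gives $\sigma_n \sim 1/(\gamma n)$. Since the product $\tau_n\sigma_n = \tau_0\sigma_0 = 1/L^2$ is preserved along the iterations, $\tau_n \sim \gamma n/L^2$. Hence $T_N = \sum_{n=1}^N \tau_{n-1}/\tau_0 \sim N^2/(2\tau_0) = \Bigo{N^2}$.

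Next I bound the remainders for $i=1$. Using $\tau_{n-1} = \Bigo{n}$ and $\e_n = \Bigo{n^{-(1+2\alpha)}}$, each summand in $A_{N,1}$ is $\sqrt{2\tau_{n-1}\e_n} = \Bigo{n^{-\alpha}}$, so by standard $p$-series asymptotics (cf. Lemma \ref{lem:behavior})
\begin{align*}
A_{N,1} = \begin{cases} \Bigo{1}, & \alpha > 1,\\ \Bigo{\log N}, & \alpha = 1,\\ \Bigo{N^{1-\alpha}}, & \alpha \in (0,1).\end{cases}
\end{align*}
Similarly $\tau_{n-1}(\e_n+\delta_n) = \Bigo{n^{1-2\alpha}}$, since $\tau_{n-1}\delta_n$ dominates $\tau_{n-1}\e_n$ whenever $\alpha \leq 1$ and both are summable for $\alpha > 1$, so
\begin{align*}
B_{N,1} = \begin{cases} \Bigo{1}, & \alpha > 1,\\ \Bigo{\log N}, & \alpha = 1,\\ \Bigo{N^{2-2\alpha}}, & \alpha \in (0,1).\end{cases}
\end{align*}
Dividing by $T_N = \Bigo{N^2}$ shows that both $A_{N,1}^2/T_N$ and $B_{N,1}/T_N$ obey the rate claimed in each of the three regimes. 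Combining this with the elementary bound $(a+b+c+d)^2 \leq 4(a^2+b^2+c^2+d^2)$ applied to the right-hand side of Theorem \ref{thm:inex_pd_acc}, and absorbing the trivial $\Bigo{1/T_N} = \Bigo{1/N^2}$ contribution coming from $\|\xs-x^0\|$ and $\sqrt{\tau_0/\sigma_0}\|\ys-y^0\|$, delivers the assertion for $i=1$.

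The cases $i=2,3$ reduce to the previous estimates. For $i=2$ we have $A_{N,2}=0$, while $\e_n$ now enters $B_{N,2}$ at order $\Bigo{n^{-2\alpha}}$ instead of $\Bigo{n^{-(1+2\alpha)}}$, so $\tau_{n-1}\e_n$ has exactly the same $\Bigo{n^{1-2\alpha}}$ behaviour as $\tau_{n-1}\delta_n$; hence $B_{N,2}/T_N$ obeys the same rate. For $i=3$, $A_{N,3}$ coincides with $A_{N,1}$ and $B_{N,3}$ keeps only the $\delta$-contribution of $B_{N,1}$, which was already the dominant one. I expect the main chore to be purely bookkeeping: verifying in each regime of $\alpha$ which summand dominates in $B_{N,i}$ and handling the boundary case $\alpha = 1$ carefully so that the $\log^2 N$ factor on the $A_{N,i}^2/T_N$ term appears precisely there, while every other step is a routine application of $p$-series asymptotics.
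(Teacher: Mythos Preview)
Your proposal is correct and follows essentially the same approach as the paper: both derive the scaling $\tau_n \sim n$, $T_N \sim N^2$ from the step-size recursion, bound $A_{N,1}$ and $B_{N,1}$ via Lemma~\ref{lem:behavior} after inserting the assumed decay of $\e_n,\delta_n$, and then observe that $i=2,3$ are special cases. The only cosmetic difference is that you spell out all three regimes of $\alpha$ for $A_{N,1}$ and $B_{N,1}$ separately and invoke $(a+b+c+d)^2\le 4(a^2+b^2+c^2+d^2)$ explicitly, whereas the paper handles the case $\alpha\in(0,1)$ first and then remarks on $\alpha\ge 1$.
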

\begin{proof}
 We refer to \cite{Chambolle2011} for a proof that using the step sizes in Theorem \ref{thm:inex_pd_acc}, it can be shown that $\sigma_n \sim 1/(n\gamma)$ and accordingly $\tau_n \sim (n\gamma)/L^2$.
 This directly implies that $T_N \sim (\gamma N^2)/(2L)$.
 Now for $i=1$ and $\alpha \in (0,1)$ we have that 
 \begin{align*}
  A_{N,1} 
  = \sum_{n=1}^N \sqrt{2 \tau_{n-1} \varepsilon_n} 
  \sim \sum_{n=1}^N \sqrt{\frac{2 \gamma (n-1) \varepsilon_n}{L^2}}
  = \frac{\sqrt{2 \gamma}}{L}\sum_{n=1}^N \sqrt{(n-1) \e_n}.
 \end{align*}
 Now by assumption $\e_n = \Bigo{n^{-1-2\alpha}}$, which implies that $\sqrt{(n-1)\e_n} = \Bigo{n^{-\alpha}}$ and we deduce $A_{N,1} = \Bigo{N^{1-\alpha}}$ using Lemma \ref{lem:behavior}.
 By an analogous argumentation 
 \begin{align*}
  B_{N,1} = 2 \sum_{n=1}^N \tau_{n-1} (\varepsilon_n + \delta_n) \sim \frac{2 \gamma}{L^2} \sum_{n=1}^N (n-1) (\varepsilon_n + \delta_n).
 \end{align*}
 Now since $\delta_n = \Bigo{n^{-2 \alpha}}$ we deduce that $n \delta_n = \Bigo{n^{1-2\alpha}}$ and hence $B_{N,1} = \Bigo{N^{2-2 \alpha}}$.
 Using $T_N = \Bigo{N^{2}}$, we find 
 \begin{align*}
  \frac{B_{N,1}}{T_N} = \Bigo{N^{-2\alpha}}, \quad \text{and} \quad \frac{A_{N,1}^2}{T_N} = \Bigo{N^{-2\alpha}}, 
 \end{align*}
 which gives the result for $i=1$ and $\alpha \in (0,1)$.
 Choosing $\alpha > 1$ will yield convergence for $A_{N,1}$ and $B_{N,1}$, which implies the fastest overall convergence rate $\Bigo{1/N^2}$, the case $\alpha = 1$ gives $A_{N,1} = \Bigo{\log(N)}$ and $B_{N,1} = \Bigo{\log(N)}$.
 It remains to note that the results for $i=2,3$ can be obtained as special cases.
\end{proof}
Corollary \ref{cor:fista_type} essentially recovers the results given in \cite{Schmidt2011,Villa2013,Aujol2015}, though the comparison is not exactly straightforward. 
For an optimal $\Bigo{N^{-2}}$ convergence in objective with a type-1 approximation the authors of \cite{Schmidt2011} require $\e_n = \Bigo{1/n^{4+\kappa}}$ for any $\kappa > 0$, for the error $d_n$ in the gradient of $h \circ K$ they need $\|d_n\| = \Bigo{1/n^{4+\kappa}}$. 
Since a type-2 approximation of the proximum is more demanding, the authors of \cite{Villa2013} obtain a weaker dependence of the convergence on the error and only require $\e_n = \Bigo{n^{3+\kappa}}$. 
Note that they only consider the case $d_n = 0$.
The work in \cite{Aujol2015} essentially refines both results under the same assumptions on the errors.
Corollary \ref{cor:fista_type} now states that for an optimal $\Bigo{N^{-2}}$ convergence we require $\e_n = \Bigo{n^{3 + \kappa}}$ in case of a type-1 approximation and $\e_n = \Bigo{n^{2 + \kappa}}$ in case of an error of type-2, which seems to be one order less than the other results.
We do not have a precise mathematical explanation at this point. 
The main difference appears to be the changing step sizes $\tau_n, \sigma_n$ in the proximal operators for the inexact primal-dual algorithm in Theorem \ref{thm:inex_pd_acc}, which behave like $n$ respectively $1/n$, while the step sizes remain fixed for inexact forward-backward.
The numerical section, however, indeed confirms the weaker dependence of the inexact primal-dual algorithm on the errors.

\begin{remark}\textup{
We want to highlight that, in the spirit of Section \ref{sec:stronger_version} it is as well possible to state a stronger version in case the approximations are of type-2 in both the primal and dual proximal point, which then bounds the ``gap'' for all $(x,y) \in \X \times \Y$ instead of for a saddle point $(\xs,\ys)$ in Theorem \ref{thm:inex_pd_acc} (cf. inequality \eqref{eq:intermediate_res}): 
\begin{align*}
 \Lcal (X^N,y) &- \Lcal (x,Y^N) \\ 
 &\leq \frac{1}{T_N} \left( \frac{1}{2\tau_0} \|x - x^0 \|^2 + \frac{1}{2\sigma_0} \| y - y^0 \|^2 + \sum_{n=1}^N \frac{\tau_{n-1}}{\tau_0} (\e_n + \delta_n) \right).
\end{align*}
Under some additional assumptions we can then again derive estimates on the primal energy for every fixed $N \in \N$. 
If again $h$ has full domain, the supremum appearing in the conjugate is attained at some $\tilde{y}^N$ and exactly along \eqref{eq:primal_estimate} we derive
\begin{align*}
 h(KX^N) &+ g(X^N) + f(X^N) - \left[ h(K\xs) + g(\xs) + f(\xs) \right] \\ 
 &\leq \frac{1}{T_N} \left( \frac{1}{2\tau_0} \|\xs - x^0 \|^2 + \frac{1}{2\sigma_0} \| \yt^N - y^0 \|^2 + \sum_{n=1}^N \frac{\tau_{n-1}}{\tau_0} (\e_n + \delta_n) \right).
\end{align*}
In case the errors are summable we again obtain that also $\yt^N$ is globally bounded (cf. Remark \ref{rem:1}) and we obtain convergence in $\Bigo{1/N^2}$. 
If the errors are not summable there is no similar argument to obtain the global boundedness of the $\yt^N$, however at least on a heuristic level one can expect a convergence to $y^*$ at a similar rate as $X^N$.
This is indeed confirmed in the numerical section where we observe the $\Bigo{N^{-2\alpha}}$ decay from Corollary \ref{cor:fista_type} also for the primal objective for nonsummable errors.
}
\end{remark}

\subsection{The smooth case}\label{sec:smooth}
We finally discuss an accelerated primal-dual algorithm if both $g$ and $h^*$ are $\gamma$- respectively $\mu$-strongly convex.
In this setting the primal objective is both smooth and strongly convex, and first-order algorithms can be accelerated to linear convergence.
We consider the algorithm 
\begin{align}
 \begin{split}\label{eq:alg_smooth}
 y^{n+1} &\approx_2^{\delta_{n+1}} \prox_{\sigma h^*} (y^n + \sigma K (x^n + \theta (x^n - x^{n-1})) \\
 x^{n+1} &\approx_i^{\varepsilon_{n+1}} \prox_{\tau g} ( x^n - \tau (K^* y^{n+1} + \nabla f(x^n) + e^{n+1})), \\ 
 \frac{1}{\theta} &= 1+\gamma\tau = 1 + \mu \sigma, \quad \tau L_f + \tau \sigma \theta^2 L^2 \leq 1,
 \end{split}
\end{align}
and prove the following result in Appendix \ref{subsec:smooth}
\begin{theorem}\label{thm:linear_case}
 Let $L = \|K\|$ and $\tau,\sigma, \theta$ such that 
 \begin{align}\label{eq:stepsizes_smooth}
  1+\gamma\tau = 1 + \mu \sigma = \frac{1}{\theta} \quad \text{and} \quad \tau L_f + \tau \sigma \theta^2 L^2 \leq 1.
 \end{align}
 Let $(x^n,y^n) \in \X \times \Y$ be defined by algorithm \eqref{eq:alg_smooth} for $i \in \{1,2,3\}$. 
 Then for the unique saddle-point $(\xs,\ys)$ and
 \begin{align*}
 T_N := \sum_{n=1}^N \frac{1}{\theta^{n-1}} 
 , \quad X^N := \frac{1}{T_N} \sum_{n=1}^N \frac{1}{\theta^{n-1}} x^n, \quad Y^N := \frac{1}{T_N} \sum_{n=1}^N \frac{1}{\theta^{n-1}} y^n
\end{align*}
 we have 
 \begin{align*}
  T_N (\Lcal(X^N,\ys) &- \Lcal(\xs,Y^N)) \\
  &\leq \frac{1}{2\tau} \left( \|\xs -x^0\| + \sqrt{\frac{\tau}{\sigma}} \|\ys-y^0\| + 2 \theta^{\frac{N}{2}} A_{N,i} + \sqrt{2 B_{N,i}}  \right)^2
  \intertext{and}
  \|\xs-x^N\|^2 
  &\leq \theta^N \left( \|\xs -x^0\| + \sqrt{\frac{\tau}{\sigma}} \|\ys-y^0\| + 2 \theta^{\frac{N}{2}} A_N + \sqrt{2 B_N}  \right)^2
 \end{align*}
 where
 \begin{alignat*}{3}
 A_{N,1} &= \sum_{n=1}^N \frac{1}{\theta^{n-1}} (\tau \|e^n\| + \sqrt{2\tau \e_n}),
 &&\quad B_{N,1} = \sum_{n=1}^N \frac{\tau}{\theta^{n-1}} (\e_n + \delta_n), \\
 A_{N,2} &= \sum_{n=1}^N \frac{\tau \|e^n\|}{\theta^{n-1}}, 
 &&\quad B_{N,2} = \sum_{n=1}^N \frac{\tau}{\theta^{n-1}} (\e_n + \delta_n), \\
 A_{N,1} &= \sum_{n=1}^N \frac{1}{\theta^{n-1}} (\tau \|e^n\| + \sqrt{2\tau \e_n}),
 &&\quad B_{N,3} = \sum_{n=1}^N \frac{\tau}{\theta^{n-1}} \delta_n.
\end{alignat*}
 
\end{theorem}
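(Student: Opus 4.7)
The plan is to follow the template of Theorems~\ref{thm:inex_pd_basic}, \ref{thm:inex_pd_acc2} and \ref{thm:inex_pd_acc}, starting from Lemma~\ref{lem:general_inequality} applied with the algorithm~\eqref{eq:alg_smooth} choices $(\xc,\yc) = (x^{n+1},y^{n+1})$, $(\xb,\yb) = (x^n,y^n)$, $(\xt,\yt) = (x^n + \theta(x^n-x^{n-1}),y^{n+1})$. The novelty here is that we exploit strong convexity of \emph{both} $g$ and $h^*$: the subdifferential inequalities used to derive \eqref{eq:dual_inequality} and \eqref{eq:primal_inequality} each acquire an extra quadratic term, so that on the right-hand side of \eqref{eq:general_inequality} the coefficients of $\|x-\xc\|^2/(2\tau)$ and $\|y-\yc\|^2/(2\sigma)$ are upgraded from $1$ to $1+\gamma\tau$ and $1+\mu\sigma$ respectively. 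By the stepsize relation \eqref{eq:stepsizes_smooth}, both equal $1/\theta$, which is precisely what is needed to obtain a geometric telescoping.

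Next I would handle the cross term $\langle K(\xt-\xc),y-\yc\rangle$; since $\yt=\yc$ the other scalar product vanishes. Writing $\xt-\xc = (x^n-x^{n+1}) + \theta(x^n-x^{n-1})$ yields
\begin{align*}
 \langle K(\xt-\xc),y-\yc\rangle
 &= \langle K(x^n-x^{n+1}),y-y^{n+1}\rangle \\
 &\quad + \theta\,\langle K(x^n-x^{n-1}),y-y^n\rangle + \theta\,\langle K(x^n-x^{n-1}),y^n-y^{n+1}\rangle,
\end{align*}
where the first two terms will telescope after multiplication by $1/\theta^n$ (since $\theta/\theta^n = 1/\theta^{n-1}$), and the last one is absorbed via Young's inequality against the $\|x^n-x^{n-1}\|^2$ and $\|y^{n+1}-y^n\|^2$ leftovers, using exactly the condition $\tau L_f + \tau\sigma\theta^2 L^2 \le 1$ together with the Lipschitz term from \eqref{eq:Lipschitz}.

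Multiplying the resulting inequality (at step $n$ with $(x,y)=(\xs,\ys)$) by $1/\theta^n$ and summing $n=0,\dots,N-1$ then gives a telescoping of the form
\begin{align*}
 \sum_{n=1}^N \frac{1}{\theta^{n-1}}\bigl(\Lcal(x^n,\ys)-\Lcal(\xs,y^n)\bigr)
 &+ \frac{\|\xs-x^N\|^2}{2\tau\theta^N} + \frac{\|\ys-y^N\|^2}{2\sigma\theta^N} \\
 &\le \frac{\|\xs-x^0\|^2}{2\tau} + \frac{\|\ys-y^0\|^2}{2\sigma} + \mathcal{R}_N,
\end{align*}
where $\mathcal{R}_N$ collects the errors weighted by $1/\theta^{n-1}$, namely a term $\sum_n \tfrac{1}{\theta^{n-1}}(\|e^n\|+\sqrt{2\e_n/\tau})\|\xs-x^n\|$ and a term $\sum_n \tfrac{1}{\theta^{n-1}}(\e_n+\delta_n)$.

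The main obstacle, as in Theorems~\ref{thm:inex_pd_basic} and~\ref{thm:inex_pd_acc2}, is controlling the error term that contains $\|\xs-x^n\|$. Dropping the (non-negative) Lagrangian sum and keeping only $\|\xs-x^N\|^2/(2\tau\theta^N)$, I would apply Lemma~\ref{lem:technical_result} to the scaled sequence $u_n := \theta^{-n/2}\|\xs-x^n\|/\sqrt{2\tau}$ with $\lambda_n := 2\theta^{-(n-1)/2}(\tau\|e^n\|+\sqrt{2\tau\e_n})/\sqrt{2\tau\theta}$ and $S_N$ given by the initial data plus $B_{N,i}$, which after bookkeeping yields
\[
 \theta^{-N/2}\|\xs-x^N\| \le \sqrt{\tfrac{\tau}{\sigma}}\|\ys-y^0\|^{1/2}\text{-style terms} + 2 A_{N,i} + \sqrt{2B_{N,i}},
\]
with $A_{N,i},B_{N,i}$ as in the statement. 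Substituting this bound back to control $\mathcal{R}_N$ (via $\|\xs-x^n\|\le \theta^{n/2}(\ldots)$ and then collapsing the weighted sum into $\theta^{N/2} A_{N,i}$), and finally squaring and simplifying, produces both the linear-rate estimate on $\|\xs-x^N\|^2$ (from the kept $\|\xs-x^N\|^2/(2\tau\theta^N)$ term) and, via Jensen's inequality applied to the convex function $(x,y)\mapsto \Lcal(x,\ys)-\Lcal(\xs,y)$ with the weights $1/\theta^{n-1}$, the announced bound on $T_N\bigl(\Lcal(X^N,\ys)-\Lcal(\xs,Y^N)\bigr)$. The type-2 and type-3 cases are recovered as in the earlier theorems by dropping the $\sqrt{2\e_n/\tau}$ inside $A_{N,i}$ (type 2) or the $\e_n$ inside $B_{N,i}$ (type 3).
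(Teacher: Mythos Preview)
Your approach is essentially the paper's: apply Lemma~\ref{lem:general_inequality} with the extra strong-convexity terms so the distance coefficients become $(1+\gamma\tau)/(2\tau)=1/(2\tau\theta)$ and $(1+\mu\sigma)/(2\sigma)=1/(2\sigma\theta)$, split the cross term exactly as you describe, absorb the mixed piece via Young's inequality and the condition $\tau L_f+\tau\sigma\theta^2L^2\le 1$, multiply by $\theta^{-n}$, telescope, and then invoke Lemma~\ref{lem:technical_result} to control the error-weighted $\|\xs-x^n\|$ terms. The paper does precisely this in Appendix~\ref{subsec:smooth}.

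One point worth flagging concerns the application of Lemma~\ref{lem:technical_result}. The paper applies it directly to $u_n=\|\xs-x^n\|$ and then claims ``by monotonicity'' that the level-$N$ bound holds for all $n\le N$; this step is loose because the resulting bound involves $\theta^n A_n$, which is not monotone in $n$. Your rescaling $u_n=\theta^{-n/2}\|\xs-x^n\|$ is actually the cleaner fix, since then $S_N=\|\xs-x^0\|^2+\tfrac{\tau}{\sigma}\|\ys-y^0\|^2+2B_N$ is genuinely increasing and $\lambda_n$ is independent of $N$. However, carrying out your computation gives $\tfrac12\sum_n\lambda_n=\sum_n\theta^{1-n/2}(\tau\|e^n\|+\sqrt{2\tau\e_n})$, not $A_{N,i}$; and after substituting $\|\xs-x^n\|\le \theta^{n/2}(\cdots)$ back into $\mathcal R_N$, the weighted error sum becomes $\sum_n\theta^{1-n/2}(\cdots)$ times the bracket, not $\theta^{N/2}A_{N,i}$ times the bracket as you wrote. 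So your ``collapsing into $\theta^{N/2}A_{N,i}$'' is not literally correct, and the constants you obtain differ slightly from those in the statement (as do, strictly speaking, the paper's). Since this affects only the form of the error constants and not the method or the linear-rate conclusion of Corollary~\ref{cor:smooth_convergence}, the argument is still sound in substance.
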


We can now state convergence rates, if the decay of the errors is also linear. 
\begin{corollary}\label{cor:smooth_convergence}
 Let $\alpha > 0$, $i \in \{1,2,3\}$ and for $0<q<1$
  \begin{align*}
  \|e^n\| = \Bigo{\sqrt{q}^n}, 
  \qquad \delta_n = \Bigo{q^n}, 
  \qquad \e_n = \Bigo{q^n}.
 \end{align*}
 Then 
 \begin{align*}
  \Lcal(X^N,\ys) - \Lcal(\xs,Y^N) + \frac{\|\xs-x^N\|^2}{2\tau} 
  = \begin{cases}
     \Bigo{\theta^N}, &\text{ if } \theta > q, \\
     \Bigo{N\theta^N}, &\text{ if } \theta = q, \\
     \Bigo{q^N}, &\text{ if } \theta < q.
    \end{cases}
 \end{align*}
\end{corollary}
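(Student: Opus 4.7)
The plan is to apply Theorem~\ref{thm:linear_case} and simply substitute the assumed geometric decay rates for the three quantities $T_N$, $A_{N,i}$, and $B_{N,i}$ that appear in the bounds. Since dividing the right-hand side in Theorem~\ref{thm:linear_case} by $T_N$ produces exactly the object we want to estimate on the left (the objective gap), and the iterate bound carries an explicit $\theta^N$ prefactor already, everything reduces to asymptotic analysis of three geometric-type sums indexed by $n$.

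First I would compute
\begin{align*}
 T_N \;=\; \sum_{n=1}^N \theta^{-(n-1)} \;=\; \frac{1 - \theta^N}{\theta^{N-1}(1-\theta)},
\end{align*}
so that $T_N = \Theta(\theta^{-N})$ and $1/T_N = \Theta(\theta^N)$. The initial-condition terms $\|\xs - x^0\|$ and $\sqrt{\tau/\sigma}\,\|\ys - y^0\|$ in Theorem~\ref{thm:linear_case} are constant, so after squaring and dividing by $T_N$ they contribute $O(\theta^N)$, which is the baseline rate one can hope for.

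Next I would treat $A_{N,i}$ and $B_{N,i}$ case by case using the assumed decay. A typical summand in $B_{N,i}$ is $\tau(\e_n+\delta_n)/\theta^{n-1} = O(\theta\cdot(q/\theta)^n)$, hence
\begin{align*}
 B_{N,i} \;=\; \begin{cases} O(1) & q < \theta,\\ O(N) & q = \theta,\\ O((q/\theta)^N) & q > \theta,\end{cases}
 \qquad \text{so} \qquad \frac{B_{N,i}}{T_N} \;=\; \begin{cases} O(\theta^N),\\ O(N\theta^N),\\ O(q^N).\end{cases}
\end{align*}
A similar (but slightly more delicate) analysis for $A_{N,i}$: its summand is $O(\theta\cdot(\sqrt q/\theta)^n)$ because of the square roots inherent to the type-1/3 bounds and the assumption $\|e^n\|=O(\sqrt q^n)$. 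Consequently $A_{N,i} = O(\max(1,(\sqrt q/\theta)^N))$ up to a logarithmic factor at equality. What actually enters the final bound is $\theta^{N/2}A_{N,i}$, whose square divided by $T_N$ gives $\theta^{2N}A_{N,i}^2$; one checks that this is $O(\theta^{2N})$ when $\sqrt q\le\theta$ and $O(q^N)$ when $\sqrt q>\theta$. Thus the $A$-contribution is never worse than the $B$-contribution and, crucially, in the regime $\theta^2 < q \le \theta$ it is $O(q^N)$ but still dominated by the $O(\theta^N)$ baseline.

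Combining the pieces yields the three regimes of the corollary for the objective, and the iterate bound is obtained identically since its right-hand side is already the same square, multiplied by $\theta^N$ instead of $1/T_N$ (the two being of the same order). The same substitution works without modification for $i=2$ and $i=3$, since the only differences are that $A_{N,2}$ drops the $\sqrt{\e_n}$ term and $B_{N,3}$ drops $\e_n$, both of which give strictly smaller sums. The main obstacle is a bookkeeping one: keeping the comparison between $\sqrt q$ versus $\theta$ in the $A$-estimates and $q$ versus $\theta$ in the $B$-estimates straight, and verifying that at the boundary $q=\theta$ the $A$-piece only contributes $O(\theta^N)$ so the overall rate is the advertised $O(N\theta^N)$ rather than $O(N^2\theta^N)$.
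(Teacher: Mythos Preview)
Your proposal is correct and follows essentially the same approach as the paper's proof: both reduce the bounds of Theorem~\ref{thm:linear_case} to the key quantity $\theta^{2N}A_{N,i}^2 + \theta^N B_{N,i}$ (you arrive there via $1/T_N \sim \theta^N$, the paper writes it directly), analyze the geometric sums $\sum (q/\theta)^n$ and $\sum (\sqrt{q}/\theta)^n$ in the three regimes $q \lessgtr \theta$, and check that the $A$-contribution at the boundary $q=\theta$ is only $O(\theta^N)$, leaving $B$ to produce the $O(N\theta^N)$ rate. Your explicit mention of the constant baseline term $O(\theta^N)$ from the initial data is a small clarification the paper leaves implicit.
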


\begin{proof}
 It is clear that we need to investigate the decay of the term
 \begin{align*}
  C_{N,i} := \theta^{2N} A_{N,i}^2 + \theta^N B_{N,i} 
 \end{align*}
 to obtain a convergence rate.
 In view of the specific form of $A_{N,i}$ and $B_{N,i}$ and the rate of $\e_n$, $\delta_n$ and $\|e^n\|$ we consider 
 \begin{align}\label{eq:aux_proof_linear}
  \theta^N \sum_{n=1}^N \frac{q^{n-1}}{\theta^{n-1}} = \theta^N \sum_{n=0}^{N-1} \left( \frac{q}{\theta} \right)^n = (\theta^N - q^N) \frac{\theta}{\theta-q}. 
 \end{align}
 Now if $q<\theta<1$, Equation \eqref{eq:aux_proof_linear} implies that $\theta^N B_{N,i} = \Bigo{\theta^N}$. 
 For $A_{N,i}$ we note the factor $\theta^{N}$ is squared, as opposed to the factor of $B_{N,i}$, which implies that the decay of $\|e^n\|$ and $\sqrt{\e_n}$ can be less restrictive for $A_{N,i}$ and implies the square root on the constant $q$ for $\|e^n\|$.
 We have to distinguish whether $\sqrt{q} < \theta$ or $\sqrt{q} > \theta$. 
 In the former case we have by Equation \eqref{eq:aux_proof_linear}, now with $\sqrt{q}$ instead of $q$, that 
 \begin{align*}
  \theta^{2N} A_{N,i}^2 = (\theta^N A_{N,i})^2 = \Bigo{(\theta^N - \sqrt{q}^N)^2} = \Bigo{\theta^{2N}} = \Bigo{\theta^N}, 
 \end{align*}
 while in the latter we obtain $\theta^{2N} A_{N,i}^2 = \Bigo{q^N} = \Bigo{\theta^N}$, which in sum gives $C_{N,i} = \Bigo{\theta^N}$. 
 If $\theta < q<1$, we have by analogous argumentation and \eqref{eq:aux_proof_linear} that $\theta^N B_{N,i} = \Bigo{q^N}$ and since $\theta < q < \sqrt{q} < 1$ also $\theta^{2N} A_{N,i}^2 = \Bigo{q^N}$, which implies $C_{N,i} = \Bigo{q^N}$. 
 For the case $\theta = q$ it is sufficient to notice that \eqref{eq:aux_proof_linear} is in $\Bigo{N\theta^N}$. 
\end{proof}
It remains to give some explicit formulation of the step sizes that fulfill the conditions \eqref{eq:stepsizes_smooth}. 
Solving \eqref{eq:stepsizes_smooth} for $\tau,\sigma$ and $\theta$ gives \cite{Chambolle2016}
\begin{align*}
 \tau &= \frac{1 + \sqrt{1 + 4L^2/(\gamma \mu) + L_f^2 / \gamma^2 + 2L_f / \gamma} - L_f/\gamma}{2L_f + 2L^2/\mu}, \\
 \sigma &= \frac{1 + \sqrt{1 + 4L^2/(\gamma \mu) + L_f^2 / \gamma^2 + 2L_f / \gamma} - L_f/\gamma}{2 L_f \mu / \gamma + 2 L^2 / \gamma}, \\
 \theta &= 1 - \frac{\sqrt{1 + 4L^2/(\gamma \mu) + L_f^2/\gamma^2 + 2 L_f / \gamma} - L_f/\gamma - 1}{2L^2/(\gamma \mu)}.
\end{align*}


\section{Numerical experiments}\label{sec:numerics}
There exists a large variety of interesting optimization problems, e.g. in imaging, that could be investigated in the context of inexact primal-dual algorithms, and even creating numerical examples for all the discussed notions of inexact proxima and different versions of algorithms clearly goes beyond the scope of this paper.
Instead, we want to discuss two different questions on two classical imaging problems and leave further studies to the interested reader. 
The main goal of this section is to confirm numerically, that the convergence rates we proved above are ``sharp'' in some sense, meaning that if the errors are close to the upper bounds we obtain the convergence rates predicted by the theory. 
The second point we want to adress is whether one can actually benefit from the theory and employ different splitting strategies in order to obtain nested algorithms, which can then only be solved in an inexact fashion (cf. \cite{Villa2013}).


We investigate both questions using problems of the form 
\begin{align}\label{eq:sp_two_operators}
 \min_{x \in X} ~ h(K_1x) + g(K_2x) = \min_{x \in X} \max_{y \in Y} ~ \langle y,K_1 x\rangle + g(K_2x) - h^*(y),
\end{align}
$K_1 \colon X \to Y$, $K_2 \colon X \to Z$, where we assume that the proximal operators of both $g$ and $h^*$ (or $g^*$ and $h$ by Moreau's decomposition) have an exact closed form solution.
The right hand side of \eqref{eq:sp_two_operators} leads to a nested inexact primal-dual algorithm 
\begin{align}\label{eq:pd_two_operators}
 y^{n+1} &= \prox_{\sigma h^*}(y^n + \sigma K_1(x^{n+1} + \theta(x^{n+1} - x^n))), \nonumber \\
 x^{n+1} &\approx_2^{\e_{n+1}} \prox_{\tau (g \circ K_2)}(x^n - \tau K_1^*y^{n+1}).
\end{align}
Hence the dual proximal operator can be evaluated exactly (i.e. $\delta_n = 0$), while the inner subproblem has to be computed in an inner loop up to the necessary precision $\e_n$.
We choose the type-2 approximation since in this case, according to Proposition \ref{prop:duality_gap_prox}, the precision of the proximum can be assessed by means of the duality gap.
In order to be able to evaluate the gap, we solve the $1/\tau$-strongly convex dual problem 
\begin{align*}
 \min_{z \in Z} ~ \frac{\tau}{2} \|K_2^*z\|^2 - \langle K_2^* z, y^{n+1} \rangle + g^*(z),
\end{align*}
using FISTA \cite{Beck2009}. 
To distinguish between outer and inner problems for the splittings we denote the iteration number for the outer problem by $n$, while the iteration number of the inner problem is $k$.
In order to achieve the necessary precision, we iterate the proximal problem until the primal-dual gap (cf. also Section \ref{sec:inexact_proximum}) satisfies
\begin{align}\label{eq:gap_constant}
 \Gcal(y^{n+1}-\tau B^*z^k,z^k) \leq C\e_n,
\end{align} 
where $\e_n = \Bigo{1/n^\alpha}$, respectively $\e_n = \Bigo{\theta^n}$ for the last experiment.
\colblue{We vary the parameter $\alpha$ in order to show the effect of the error decay rate on the algorithm (cf. Remark \ref{rem:mixed_rates}).}
While for the asymptotic results we proved in the precious section the constant $C$ of the rate does not matter, it indeed does in practice. 
In order to use Proposition \ref{prop:duality_gap_prox} as a criterion, $C$ should correspond to the ``natural'' size of the duality gap of \eqref{eq:pd_two_operators}.
In order not to choose the constraint too restrictive but still active we follow \cite{Villa2013} and choose $ C = \Gcal(y^0-\tau B^*y^0,0)$, which is the duality gap of the first proximal subproblem for $n = 1$ evaluated at $z = 0$.

For the sake of brevity we discuss only three problems: 
we start with the non-differentiable TV-$L^1$ model for deblurring, a problem which cannot be accelerated,
and continue with ``standard'' TV-$L^2$ deblurring, which also serves as a prototype for a manifold of applications with a general operator instead of a blurring kernel (cf. e.g. \cite{Sawatzky2013,Ehrhardt2016}).
Since in this case the objective is Lipschitz-differentiable, the convex conjugate is strongly convex, which allows to accelerate the algorithm. 
The third problem we investigate is a ``smoothed'' version of the TV-$L^2$ model, which can be accelerated to linear convergence.

We investigate two different setups: 
as already announced above, we want to confirm the convergence rates predicted by the theory numerically. 
We hence require the inexact proximal problem \eqref{eq:pd_two_operators} to be solved with an error close to the accuracy level $\e_n$. 
To achieve this we, where it is necessary, deliberately solve the inner problem suboptimally, meaning that we use a {\it cold} start \colblue{(random initialization of the algorithm)} and reduced step sizes for the inner problem, ensuring that the inner problem is not solved ``accidentially'' at a higher precision. 
We shall see that this is indeed necessary for the slow TV-$L^1$ problem.
In a second setup we investigate whether the obtained error bounds can also be used as a criterion to ensure (optimal) convergence of the nested algorithm \eqref{eq:pd_two_operators}. 
As observed in e.g. \cite{Beck2009b} for the TV-$L^2$ model and the FISTA algorithm, insufficient precision of the inner proximum can cause the algorithm to diverge.
Instead of performing a fixed high number of inner iterations as a remedy, we solve the inner problem only up to precision $\e_n$ in every step, which by the theory ensures that the algorithm converges with the same rate as the decay of the errors. 
We now use the best possible step sizes and a {\it warm} start strategy \colblue{(initialization by the previous solution)} in order to minimize the computational costs of the inner loop and it has already been stated in \cite{Villa2013}, that this strategy significantly speeds up the process.
We use a standard primal-dual reconstruction (PDHG) after $10^5$ iterations as a numerical ``ground truth'' $u^*$ to compute the optimal energy $F^* = F(u^*)$.


\subsection{Nondifferentiable deblurring with the TV-$L^1$ model}\label{sec:tv_l1_deblurring}
\begin{figure}[t]
\begin{center}
 \begin{tabular}{cc}
  \includegraphics[width=0.4\textwidth]{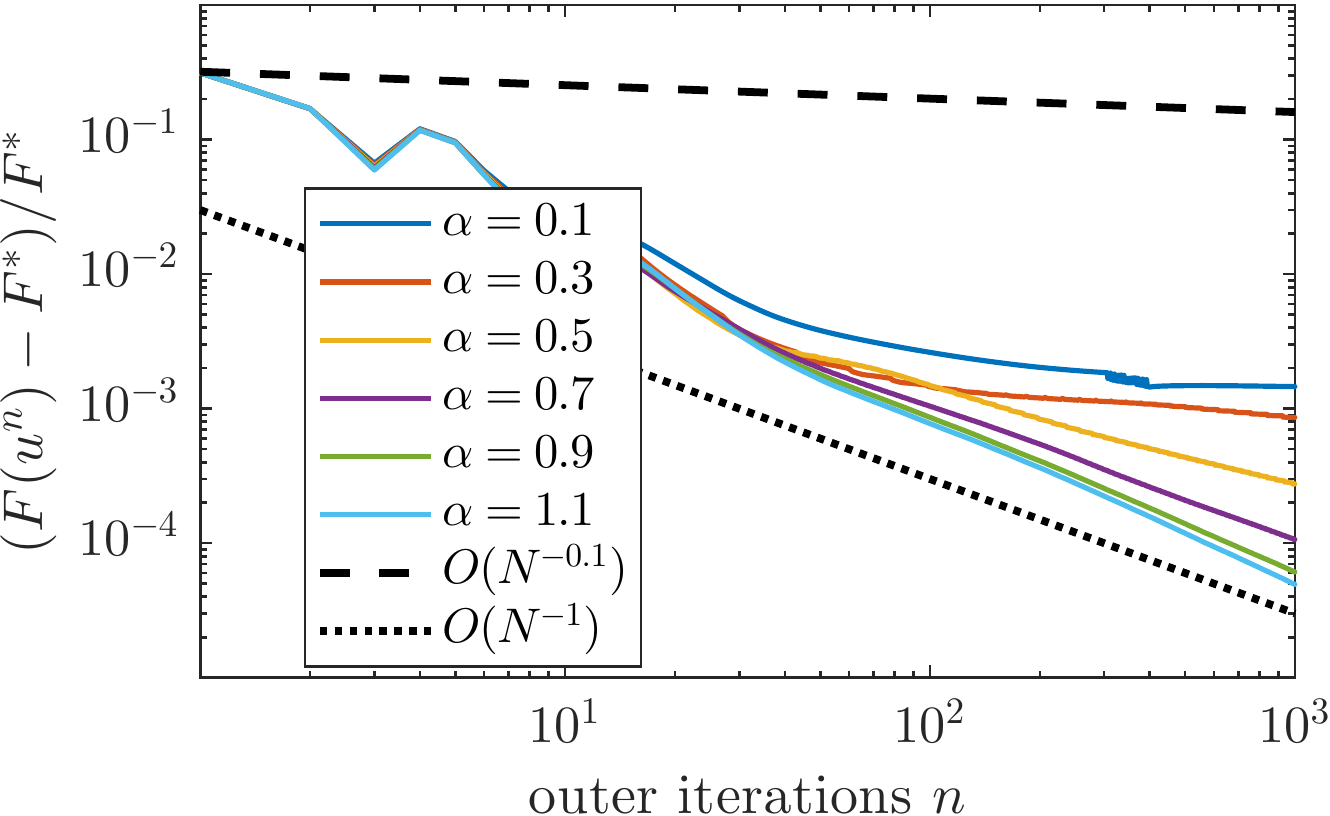} & 
  \includegraphics[width=0.4\textwidth]{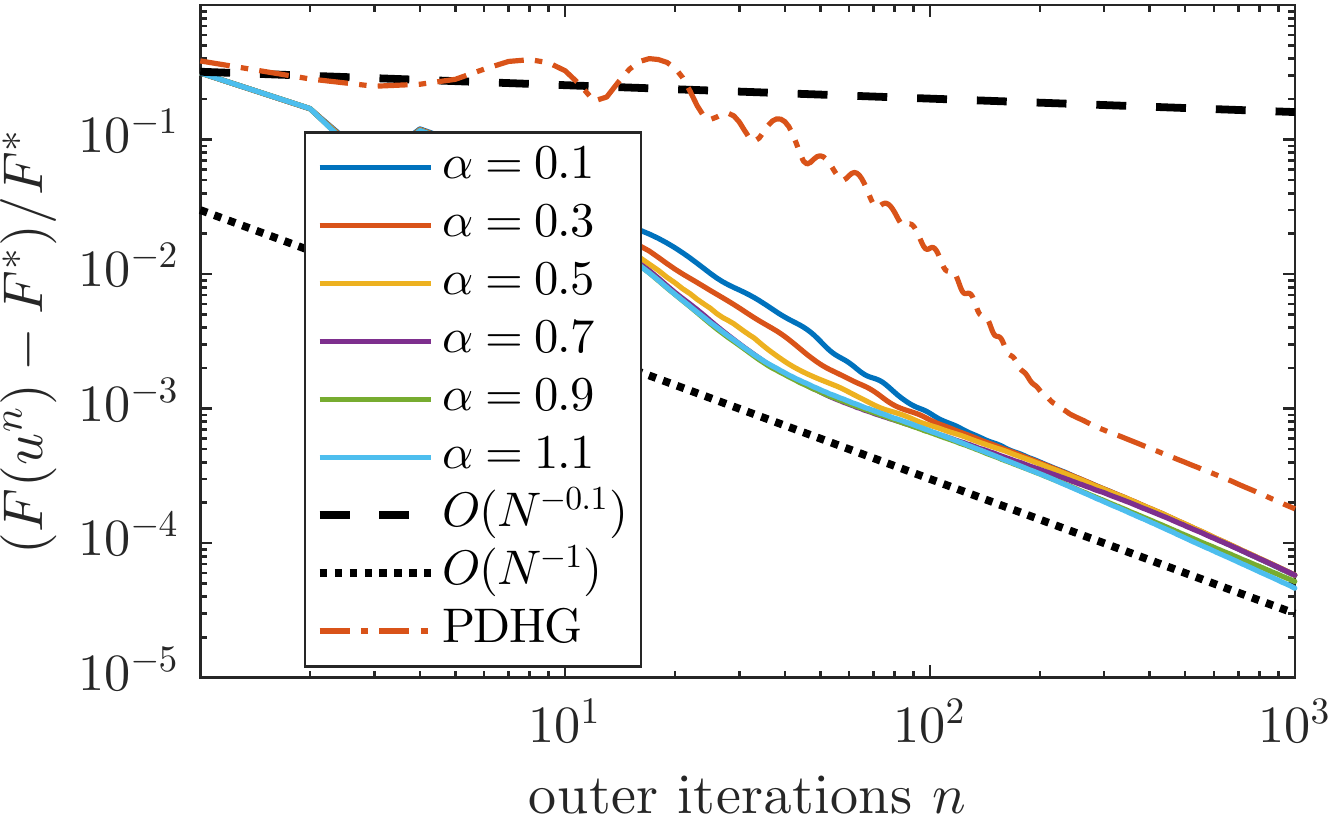} \\
  (a) & (b) \\
  \includegraphics[width=0.4\textwidth]{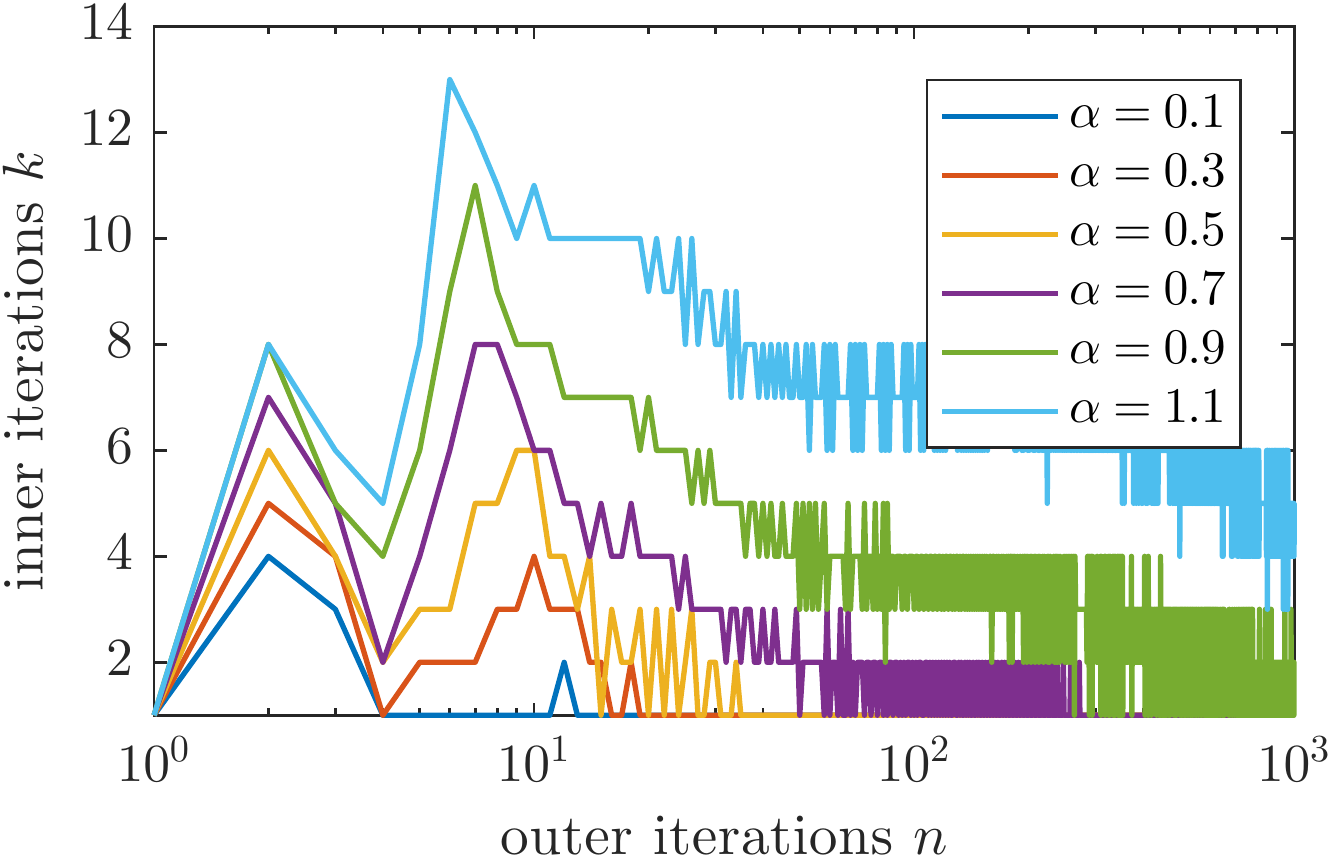} & 
  \includegraphics[width=0.4\textwidth]{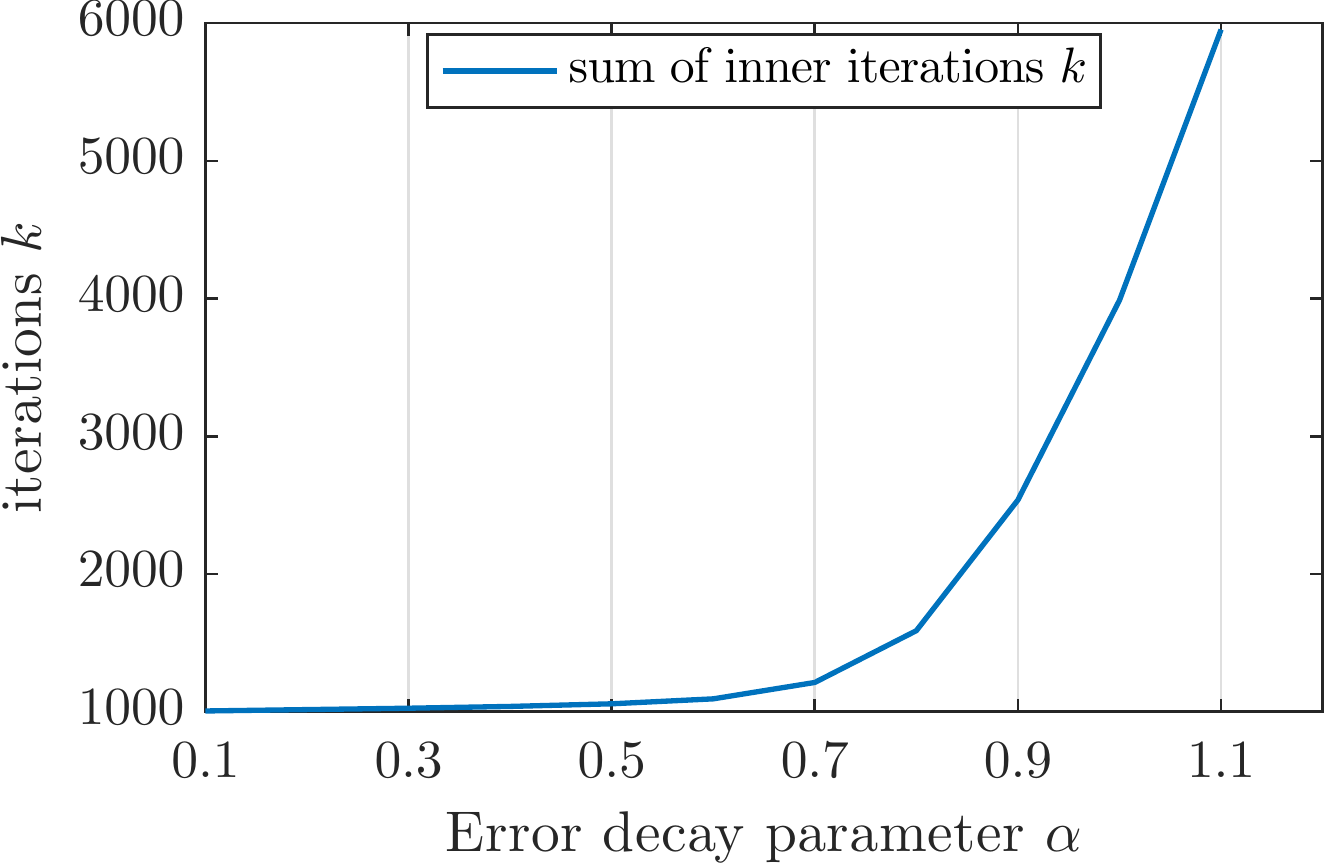} \\
  (c) & (d)
 \end{tabular}
 \caption{{\bf Inexact primal-dual on the TV-$L^1$ problem.} 
 (a) and (b) loglog plots of the relative objective error vs. the outer iteration number for different decay rates $\alpha$ of the errors. (a) cold start, error close to the bound $\Bigo{1/n^\alpha}$, (b) warm start.
 (c) and (d) number of inner iterations respectively sum of inner iterations vs. number of outer iterations for different decay rates $\alpha$.
 One can observe in (a) that the predicted rates in the worst case are attained, while in practice the problem also converges for very few inner iterations (b), (c) and (d).
 }
 \label{fig:l1_tv_stats}
 \end{center}
\end{figure}
In this section we study the numerical solution of the TV-$L^1$ model 
\begin{align}
 u^* \in \arg \min_{u \in X} ~ F(u) = \|Au - f \|_1 + \lambda \| \nabla u \|_1,
\end{align}
with a discrete blurring operator $A \colon X \to X$.
As already lined out in the introduction, there exist a variety of methods to solve the problem (e.g. \cite{Chan2005,Fu2006,Dong2009,Wu2012}), where most of them make use of the fact that the operator $A$ can be written as a convolution. 
We use an easy strategy which does not rely on the structure of the operator and is hence also applicable to operators different from convolutions.
Due to the nondifferentiability of both the data term and regularizer, a very simple approach is to dualize both terms (similar to ADMM \cite{Boyd2011} or 'PD-Split' in \cite{Chambolle2016a}): 
\begin{align*}
 \min_{u \in X} \max_{y_1 \in X,y_2 \in Y} ~ \langle y_1, Au-f \rangle + \langle y_2, \nabla u \rangle - \delta_{P_1} (y_1) - \delta_{P_\lambda} (y_2),
\end{align*}
where $P_\lambda$ denotes the convex set $P_\lambda = \{ x \in X ~|~ \|x \|_\infty \leq \lambda \}$.
One can then employ a standard primal-dual method (PDHG \cite{Chambolle2011}) which reads  
\begin{align*}
 y_1^{n+1} &= \proj_{P_1} (y_1^n + \sigma A (2 u^{n+1} - u^n)), \\
 y_2^{n+1} &= \proj_{P_\lambda} (y_2^n + \sigma \nabla (2 u^{n+1} - u^n)), \\
 u^{n+1} &= u^n - \tau (A^*y_1^{n+1} - \diverg(y_2^{n+1})).
\end{align*}
Unfortunately one can observe that whenever there is no primal term in the formulation of the problem, the energy tends to oscillate and convergence can be quite slow (even though of course in $\Bigo{1/N}$, cf. Figure \ref{fig:l1_tv_stats}(b)). 
As an alternative we propose to split the problem differently and operate on the following primal-dual formulation:
\begin{align*}
 \min_{u\in X} \max_{y \in Y} ~ \langle y,Au-f \rangle - \delta_{P_1}(y) + \lambda \| \nabla u \|_1.
\end{align*}
We employ algorithm \eqref{eq:pd_general_prox_2}, i.e. the non-accelerated basic inexact primal-dual algorithm (iPD) with type-2 errors and obtain
\begin{align}\label{eq:ipd_l1tv} 
 \begin{split}
  y^{n+1} &= \proj_{P_1}(y^n + \sigma A (2u^{n+1} - u^n)), \\
  u^{n+1} &\approx_2^{\e_{n+1}} \arg \min_{u\in X} ~ \frac{1}{2\tau} \|u - (u^n - \tau A^*y^{n+1})\|^2 + \lambda \| \nabla u \|_1.
 \end{split}
\end{align}
Note that the dual proximum in this case can be evaluated error-free.

As a numerical study we perform deblurring on MATLAB's {\it Lily} image in $[0,1]$ with resolution $256 \times 192$, which has been corrupted by a Gaussian blur of approximately 12 pixels full width at half maximum (where we assume a pixel size of 1) and 50 percent salt-and-pepper noise, i.e. 50 percent of the pixels have been randomly set to either $0$ or $1$. 
Furthermore, we performed power iterations to determine the operator norm of $(A, \nabla)$ as $L \approx \sqrt{8}$ and set $\sigma = \tau = 0.99/\sqrt{8}$ for (PDHG). 
For (iPD) $L$ can be determined analytically as $L = \|A \| = 1$, hence $\tau = \sigma = 0.99$ for (iPD).

At first, we want to confirm the convergence rates predicted by the theory numerically. 
One can easily observe that the decay of the relative objective is almost exactly as predicted: 
with higher $\alpha$ it approaches $\Bigo{N^{-1}}$, in fact for summable errors it even seems a little better.
In the second setup we investigate whether the obtained error bounds can also be used as a criterion to ensure  (optimal) convergence of the nested algorithm \eqref{eq:ipd_l1tv}, 
and results for varying parameter $\alpha$ can be found in Figure \ref{fig:l1_tv_stats}(b).
Interestingly for this problem, the error bounds from the theory are indeed too pessimistic or, vice versa, the TV-$L^1$ problem is ``easier'' than expected. 
As can be observed in Figure \ref{fig:l1_tv_stats}(b), the convergence rate for all choices of $\alpha$ tends towards $\Bigo{1/N}$, with slight advantages for higher $\alpha$, while the number of required inner iterations $k$ (Figure \ref{fig:l1_tv_stats}(c) and (d)) to reach the necessary precision is remarkably low. 
In fact, performing just a single inner iteration in every step of the algorithm resulted in a $\Bigo{1/N}$ convergence rate (cf. also Figure \ref{fig:l1_tv_stats}(d)).
The required number of inner iterations even {\it decreases} over the course of the outer iterations which suggests that the dual variable of the inner problem ``converges'' as well.
Note that this does not contradict the theoretical findings of this paper, but the contrary: 
while the first study clearly confirms that in the worst case the proved worst-case estimates are reached, the second implies that in practice one might as well perform by far better.

\subsection{Differentiable deblurring with the TV-$L^2$ model}\label{sec:tv_l2_deblurring}
The second problem we investigate is the TV-$L^2$ model for image deblurring 
\begin{align}\label{eq:tv_deblurring}
 u^* \in \arg \min_{u \in X} ~ \frac{1}{2} \| Au - f \|_2^2 + \lambda \| \nabla u \|_1.
\end{align}
Again, the easiest approach to solve \eqref{eq:tv_deblurring} is to write down a primal-dual formulation 
\begin{align*}
 \min_{u \in X} \max_{y_1 \in X,y_2 \in Y} ~ \langle y_1, Au-f \rangle + \langle y_2, \nabla u \rangle - \frac{1}{2} \|y_1\|^2 - \delta_{P_\lambda} (y_2).
\end{align*}
Since the above problem is {\it not} strongly convex in $y_2$ it cannot be accelerated, so a basic primal-dual algorithm \cite{Chambolle2011} (PDHG) for the solution reads
\begin{align*}
 y_1^{n+1} &= \frac{y_1^n + \sigma (A (2 u^{n+1} - u^n) - f)}{1 + \sigma}, \\
 y_2^{n+1} &= \proj_{P_\lambda} (y_2^n + \sigma \nabla (2 u^{n+1} - u^n) ), \\
 u^{n+1} &= u^n - \tau (A^* y_1^{n+1} - \diverg(y_2^{n+1}) ).
\end{align*}
We remark that, due to the special relation between the Fourier transform and a convolution, the same problem can be solved without dualizing the data term, since the primal proximal operator admits a closed form solution \cite{Chambolle2011}. 
The problem however stays non-strongly convex, and in order to keep this a general prototype for $L^2$-type problems, we do not use this formulation.

The inexact approach instead operates on a different primal-dual formulation given by
\begin{align*}
 \min_{u \in X} \max_{y \in X} ~ \langle y, Au-f \rangle - \frac{1}{2} \|y\|^2 + \lambda \| \nabla u \|_1,
\end{align*}
which is now $1$-strongly convex in $y$ and can be accelerated. 
Using the inexact primal-dual algorithm from Section \ref{sec:dual_acceleration} leads to 
\begin{align*}
 y^{n+1} &= (y^n + \sigma_n (A (u^{n+1} + \theta_n (u^{n+1} - u^n)) -f))/(1 + \sigma_n), \\
 u^{n+1} &\approx_2^{\e_{n+1}} \arg \min_{u \in \X} ~ \frac{1}{2\tau_n} \|u - (u^n - \tau_n A^*y^{n+1}) \|^2 + \| \nabla u \|_1,
\end{align*}
with $\tau_n,\sigma_n,\theta_n$ as given in Theorem \ref{thm:inex_pd_acc}.
We again perform deblurring on MATLAB's {\it Lily} image in $[0,1]$ with resolution $256 \times 192$, which has been corrupted by a Gaussian blur of approximately 12 pixels full width at half maximum (where we assume a pixel size of 1), and in this case Gaussian noise with standard deviation $s = 0.01$ and zero mean. 
We allow errors of the size $\e_n = C/n^{-2\alpha}$ for $\alpha \in (0,1)$, which by Corollary \ref{cor:fista_type} should result in a $\Bigo{N^{-2\alpha}}$ rate respectively $\Bigo{N^{-2}}$ for $\alpha > 1$.
The results can be found in Figure \ref{fig:l2_tv_stats}.
In contrast to the TV-$L^1$ problem, in this experiment it was not necessary to employ a {\it cold} start strategy and reduced step sizes for the inner problem in order to obtain the worst case rates.
Instead also for a warm start and best possible step sizes for the inner problem the bounds for the gap \eqref{eq:gap_constant} were active for all choices of $\alpha$.
Figure \ref{fig:l2_tv_stats} shows the error in relative objective for the ergodic sequence $U^N$ (a) and the iterates $u^n$ (b) for increasing $\alpha$. 
It can be observed that the rate is almost exactly the one predicted, while the iterates themselves even decay a little faster than the ergodic sequence. 
The amount of inner iterations necessary to obtain the required precision of the proximum is unsurprisingly higher than in the non-accelerated case, though they stay reasonable for rather low outer iteration numbers.

\begin{figure}[t]
\begin{center}
 \begin{tabular}{cc}
  \includegraphics[width=0.4\textwidth]{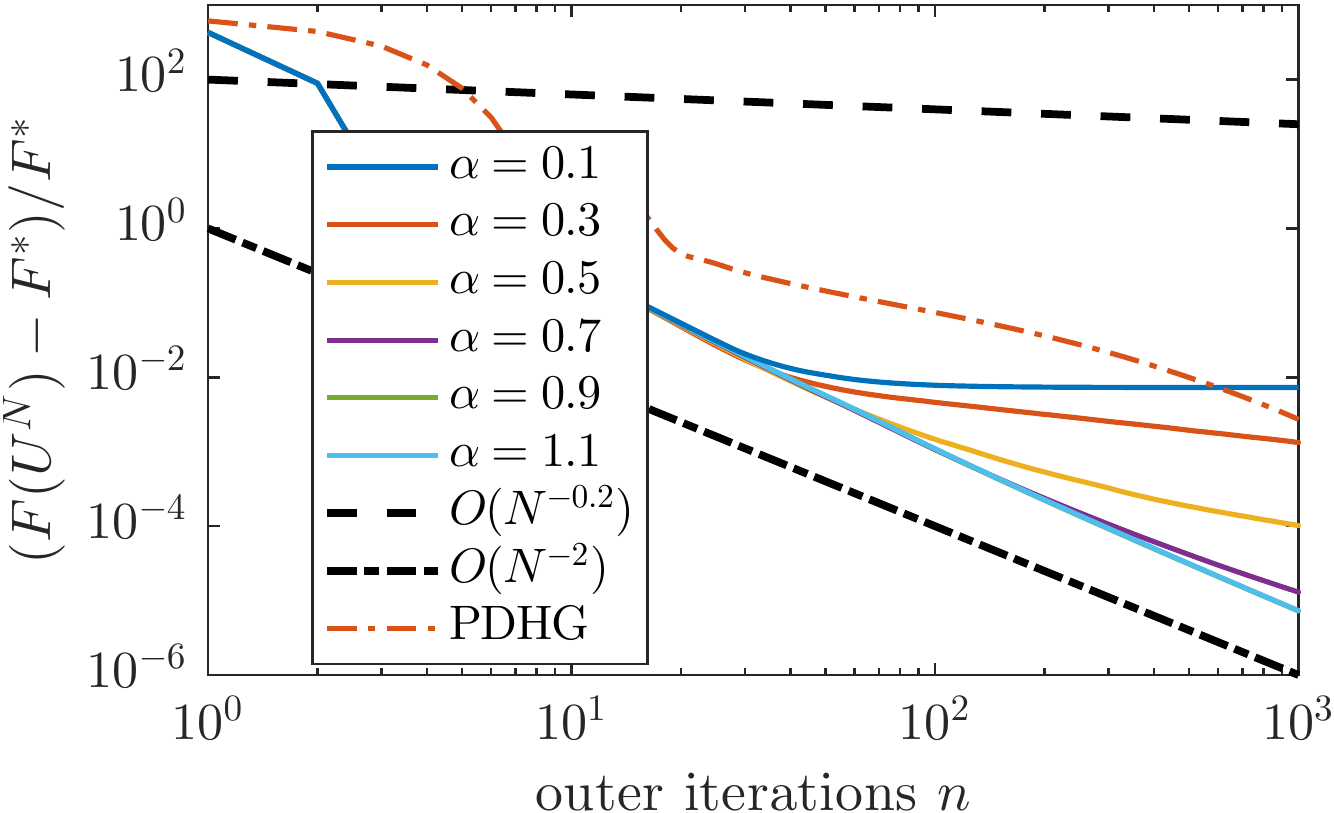} & 
  \includegraphics[width=0.4\textwidth]{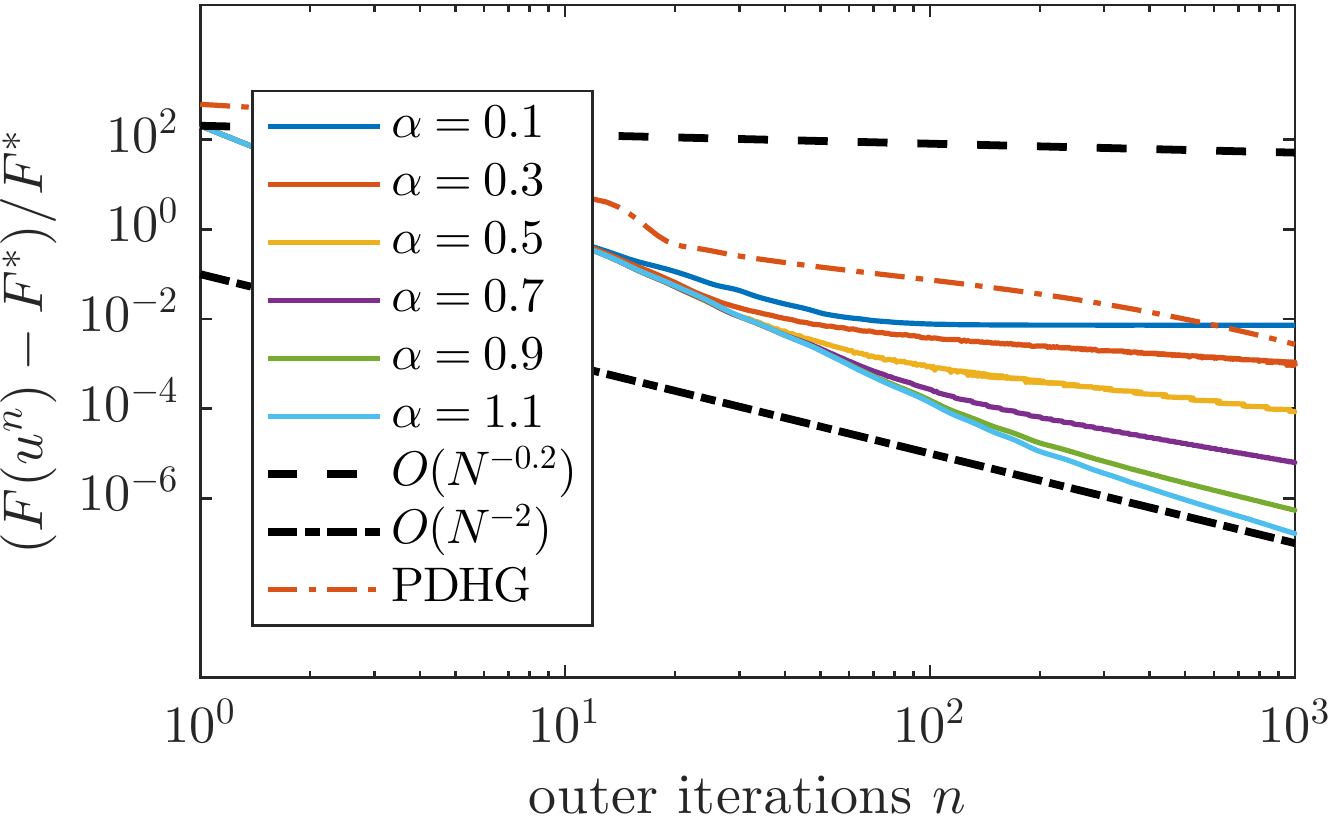} \\
  (a) & (b) \\
  \includegraphics[width=0.4\textwidth]{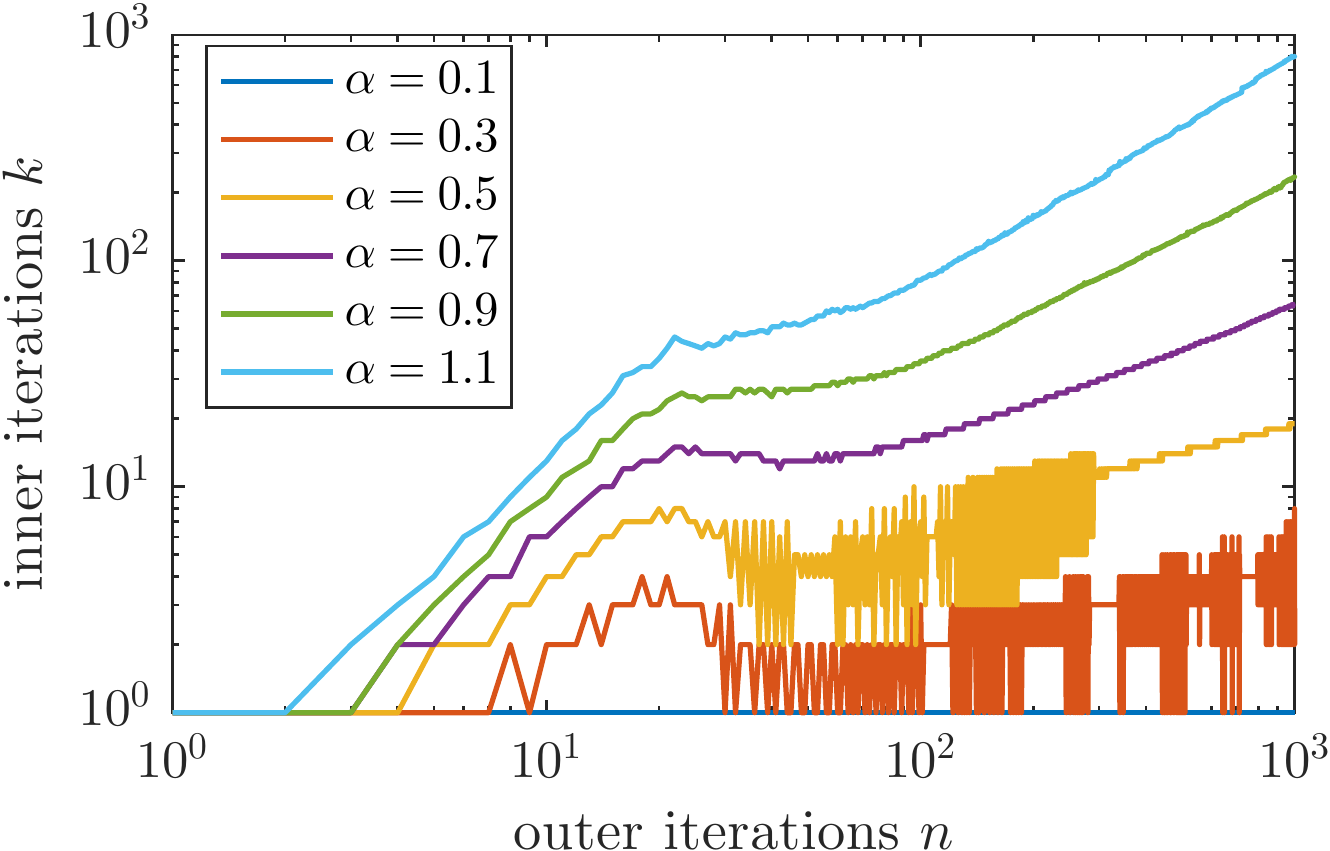} & 
  \includegraphics[width=0.4\textwidth]{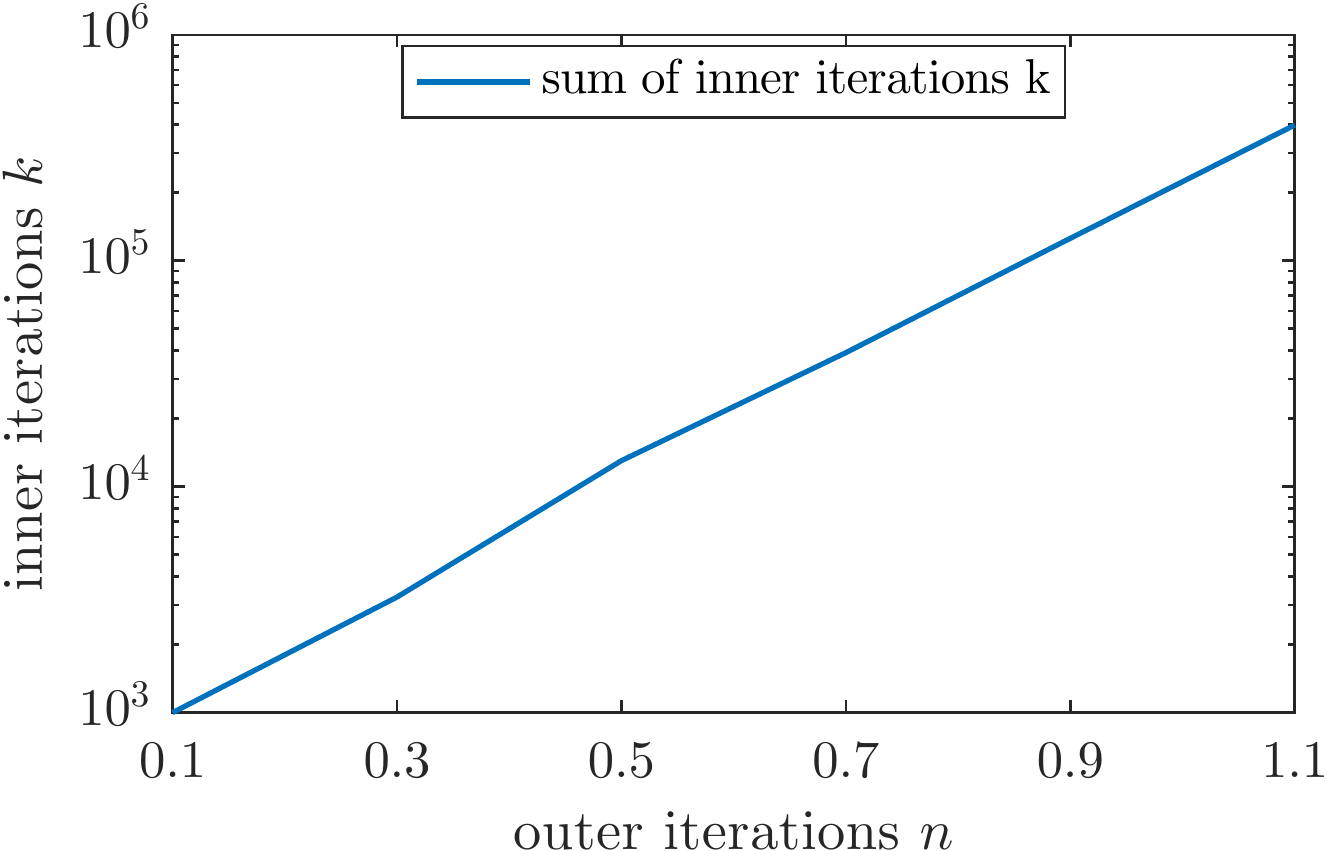} \\
 \end{tabular}
 \caption{{\bf Inexact primal-dual on the TV-$L^2$ problem.} 
 (a) and (b) loglog plots of the relative objective error vs. the outer iteration number for different precisions $C/n^{-2\alpha}$ of the errors.
 (a) ergodic sequence, (b) iterates.
 (c) and (d) number of inner iterations respectively sum of inner iterations vs. number of outer iterations for different decay rates $\alpha$.
 One can observe that the predicted rate of $\Bigo{N^{-2\alpha}}$ is attained both for the ergodic sequence and the single iterates, exactly reflecting the influence of the errors/imprecision.
 }
 \label{fig:l2_tv_stats}
 \end{center}
\end{figure}

\subsection{Smooth deblurring with the TV-$L^2$ model}\label{sec:tv_l2_deblurring_smooth}
The last problem we consider is a smoothed version of the TV-$L^2$ model from the previous experiments: 
\begin{align}\label{eq:tv_deblurring_smooth}
 u^* \in \arg \min_{u \in X} ~ \frac{1}{2} \| Au - f \|_2^2 + \lambda \| \nabla u \|_1 + \frac{\gamma}{2} \|u\|^2,
\end{align}
for small $\gamma$, with primal-dual formulation 
\begin{align}\label{eq:pd_not_acceleratable}
 \min_{u \in X} \max_{y_1 \in X,y_2 \in Y} ~ \langle y_1, Au-f \rangle + \langle y_2, \nabla u \rangle - \frac{1}{2} \|y_1\|^2 - \delta_{P_\lambda} (y_2) + \frac{\gamma}{2} \|u\|^2.
\end{align}
Since the above problem is $\gamma$-strongly convex in $u$ (note that it is also $L_f = \gamma$-Lipschitz differentiable in the primal variable), a possible accelerated primal-dual algorithm \cite{Chambolle2016} (PDHGacc) for the solution reads
\begin{align*}
 y_1^{n+1} &= \frac{y_1^n + \sigma_n (A (u^{n+1} + \theta_n (u^{n+1} - u^n)) - f)}{1 + \sigma_n}, \\
 y_2^{n+1} &= \proj_{P_\lambda} (y_2^n + \sigma_n \nabla (u^{n+1} + \theta_n (u^{n+1} - u^n)) ), \\
 u^{n+1} &= (1-\tau_n \gamma) u^n - \tau_n (A^* y_1^{n+1} - \diverg(y_2^{n+1}) ),
\end{align*}
with $\tau_n, \sigma_n,\theta_n$ given by Theorem \ref{thm:inex_pd_acc2} (see also \cite{Chambolle2016}). 
We choose $\tau_0 = 0.99 / L$, $\sigma_0 = (1 - \tau_0 L_f) / \tau_0 L^2$ such that $\tau_0 L_f + \tau_0 \sigma_0 L^2 = 1$ as required, with $L = \| (A,\nabla) \| \approx \sqrt{8}$ (see also the previous section).
We remark that the primal term involving $\gamma$ could also be handled implicitly, leading to a linear proximal step instead of the explicit evaluation of the gradient which, however, did not substantially affect the results.
In the spirit of the previous experiments we employ a different splitting on this problem:
\begin{align}\label{eq:pd_accaleratable}
 \min_{u \in X} \max_{y \in Y} ~ \langle y,Au -f \rangle -\frac{1}{2} \|y\|^2 + \lambda \| \nabla u \|_1 + \frac{\gamma}{2} \|u\|^2.
\end{align}
The benefit is that even for small $\gamma$ this problem is $\gamma$-strongly convex in the primal {\it and} $1$-strongly convex in the dual variable and hence can be accelerated to linear convergence, which provides a huge boost in performance.
Note that the same is not possible in formulation \eqref{eq:pd_not_acceleratable}, since the problem is not strongly convex in $y_2$.
We can handle the smooth primal term in \eqref{eq:pd_accaleratable} explicitly such that the associated inexact primal-dual algorithm (iPD) from Section \ref{sec:smooth} reads 
\begin{align*}
 y^{n+1} &= (y^n + \sigma (A (u^{n+1} + \theta (u^{n+1} - u^n)) -f))/(1 + \sigma), \\
 u^{n+1} &\approx_2^{\e_{n+1}} \arg \min_{u \in \X} ~ \frac{1}{2\tau} \|u - [(1 - \tau \gamma) u^n - \tau A^*y^{n+1}] \|^2 + \| \nabla u \|_1.
\end{align*}
with $\tau,\sigma,\theta$ defined at the end of Section \ref{sec:smooth}.
In this case we have $\gamma = L_f$, such that the formulas simplify to 
\begin{align*}
 \tau = \frac{\sqrt{4 + 4L^2/(\gamma \mu)}}{2 \gamma + 2 L^2/\mu}, \quad 
 \sigma = \frac{\sqrt{4 + 4L^2/(\gamma \mu)}}{2 \mu + 2L^2 /\gamma}, \quad 
 \theta = 1 - \frac{\sqrt{4 + 4L^2/(\gamma \mu)} -2}{2L^2 / (\gamma \mu)}.
\end{align*}
We revisit the experimental setting from Section \ref{sec:tv_l2_deblurring_smooth}, such that $L = \|A\|= 1$, $\lambda = 0.01$ and choose $\gamma = 1e-3$. 
With this size of $\gamma$ the results were barely distinguishable from the results of the non-smoothed model from Section \ref{sec:tv_l2_deblurring}.
This leads to $\theta \approx 0.96$ for the constant of the linear convergence.
Figure \ref{fig:l2_tv_smooth_stats} shows the results for (PDHGacc) and (iPD) using an error decay rate of $q = 0.9$, i.e. according to Corollary \ref{cor:smooth_convergence} we expect a linear convergence with constant $\theta > q$, which is indeed the case.
One can observe that already after 250 iterations (iPD) reaches a relative objective error of $1\mathrm{e-}10$, while the accelerated PD version has barely reached $1\mathrm{e-}2$. 
It should however be mentioned that also (PDHGacc) reaches the $\Bigo{N^{-2}}$ rate soon after these 250 iterations.
Figure \ref{fig:l2_tv_smooth_stats}(c) shows the price we pay for the inner loop, i.e. the number of inner iterations which is necessary over the course of the 250 outer iterations. 
As one expects for linear convergence, the number of inner iterations explodes for high outer iteration numbers, which substantially slows down the algorithm. 
However, the algorithm reaches an error of $1\mathrm{e-}6$ in relative objective already after approximately 100 iterations, in which case the number of inner iterations is still remarkably low (around 10-20), which makes the approach viable in practice. 
This is in particular interesting for problems with a very costly operator $A$, where the tradeoff between outer and inner iterations is high.

 \begin{figure}[t]
 \begin{center}
 \begin{tabular}{ccc}
  \includegraphics[width=0.3\textwidth]{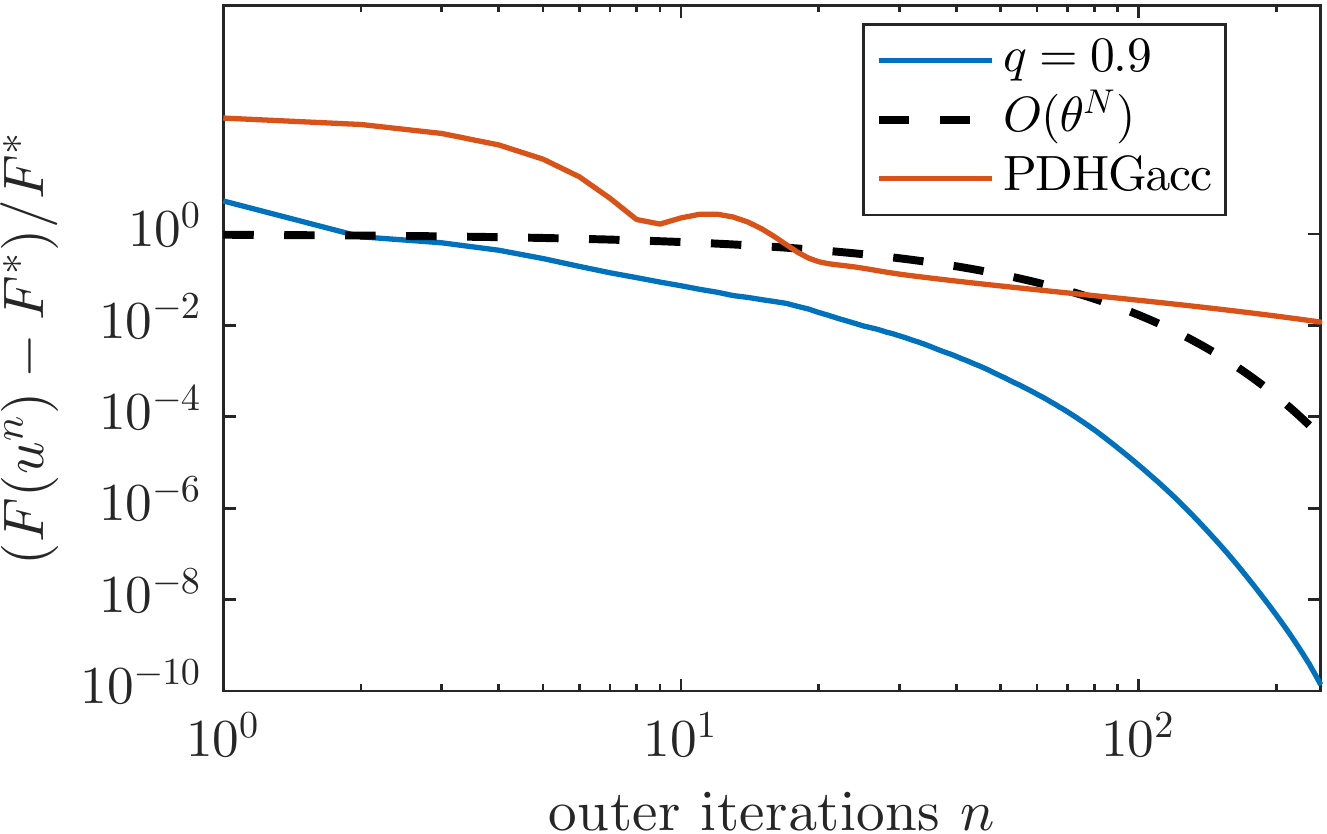} & 
  \includegraphics[width=0.3\textwidth]{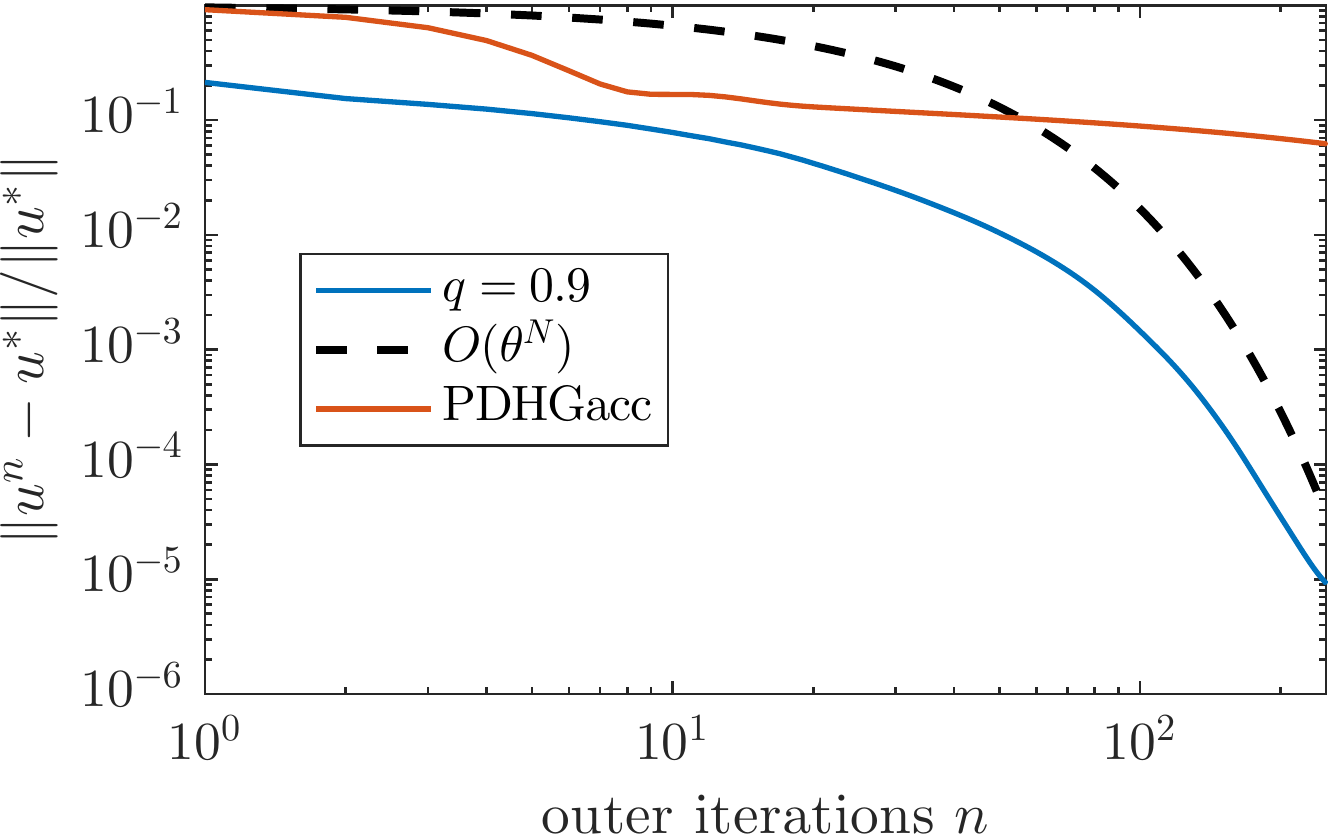} & 
  \includegraphics[width=0.3\textwidth]{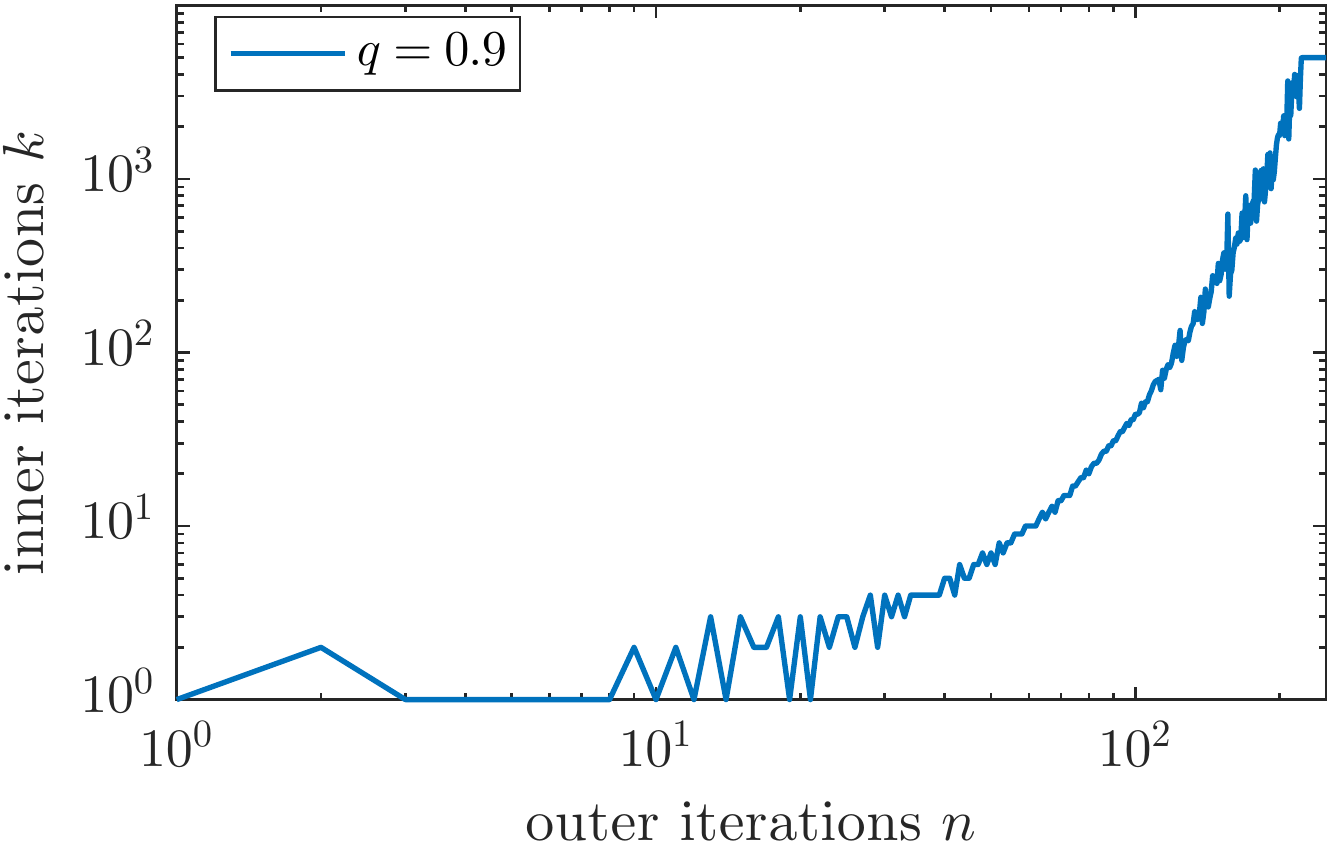} \\
  (a) & (b) & (c) \\
 \end{tabular}
 \caption{{\bf Inexact primal-dual on the smoothed TV-$L^2$ problem.} (a) and (b) loglog plots of the relative objective error respectively relative error in norm vs. the outer iteration numbers for accelerated primal-dual (PDHGacc) and inexact primal-dual (iPD) for $q = 0.9$, (c) loglog plot of the inner iteration number vs. outer iteration number for $q = 0.9$.
 One can observe that the predicted convergence rate of $\Bigo{\theta^N}$ is exactly attained, while for lower outer iteration numbers the necessary amount of inner iterations stays reasonably low. 
 }
 \label{fig:l2_tv_smooth_stats}
 \end{center}
\end{figure}

\section{Conclusion and Outlook}\label{sec:conclusion}
In this paper we investigated the convergence of the class of primal-dual algorithms developed in \cite{Pock2009,Chambolle2011,Chambolle2016} under the presence of errors occurring in the computation of the proximal points and/or gradients. 
Following \cite{Schmidt2011,Villa2013,Aujol2015} we studied several types of errors and showed that under a sufficiently fast decay of these errors we can establish the same convergence rates as for the error-free algorithms. 
More precisely we proved the (optimal) $\Bigo{1/N}$ convergence to a saddle-point in finite dimensions for the class of non-smooth problems considered in this paper, and proved a $\Bigo{1/N^2}$ or even linear $\Bigo{\theta^N}$ convergence rate for partly smooth respectively entirely smooth problems.
We demonstrated both the performance and the practical use of the approach on the example of nested algorithms, which can be used to split the global objective more efficiently in many situations. 
A particular example is the nondifferentiable TV-$L^1$ model which can be very easily solved by our approach. 
A few questions remain open for the future: 
A very practical one is whether one can use the idea of nested algorithms to (heuristically) speed up the convergence of real life problems which are not possible to accelerate, such as TV-type methods in medical imaging. 
As demonstrated in the numerical section, using an inexact primal-dual algorithm one can often ``introduce'' strong convexity by splitting the problem differently and hence obtain the possibility to accelerate.  
This can in particular be interesting for problems with operators of very different costs, where the trade-off between inner and outer iterations is high and hence a lot of inner iterations are still feasible.
Following the same line, it would furthermore be interesting to combine the convergence results for inexact algorithms with stochastic approaches as done in \cite{Chambolle2017}, which are also designed to speed up the convergence for this particular situation, which could provide an additional boost.
Another point to investigate is whether one can combine the inexact approach with linesearch and variable metric strategies similar to \cite{Bonettini2016}.

\appendix

\section{Appendix}
In the Appendix we provide two technical results and the proofs for all the accelerated versions of the algorithm, since they basically follow the same line as the basic proof.
\subsection{Two technical lemmas}\label{sec:technical_lemmas}
The following lemma is taken from \cite{Schmidt2011}.
\begin{lemma}[\cite{Schmidt2011}]\label{lem:technical_result}
 Assume that the sequence $\{ u_N \}$ is nonnegative and satisfies the recursion
 \begin{align*}
  u_N^2 \leq S_N + \sum_{n=1}^N \lambda_n u_n 
 \end{align*}
 for all $N \geq 1$, where $\{S_N \}$ is an increasing sequence, $S_0 \geq u_0^2$, and $\lambda_n \geq 0$ for all $n \geq 0$.
 Then for all $N \geq 1$
 \begin{align*}
  u_N \leq \frac{1}{2} \sum_{n=1}^N \lambda_n + \left( S_N + \left( \frac{1}{2} \sum_{n=1}^N \lambda_n \right)^2 \right)^{\frac{1}{2}}.
 \end{align*}
\end{lemma}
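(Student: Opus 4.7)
The plan is to reduce the recursive inequality to a scalar quadratic inequality by passing to a running maximum.

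First I would introduce the running maximum $v_N := \max_{0 \le n \le N} u_n$, which is nondecreasing by construction. Since $v_N$ is attained at some index $m\in\{0,1,\dots,N\}$, I would split into the trivial case $m=0$ (in which $v_N^2 = u_0^2 \le S_0 \le S_N$, so the conclusion is immediate because the right-hand side of the claim is at least $\sqrt{S_N}$) and the case $m\ge 1$, where the hypothesis gives $v_N^2 = u_m^2 \le S_m + \sum_{n=1}^m \lambda_n u_n$.

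Next, using the monotonicity of $\{S_N\}$ I would replace $S_m$ by $S_N$, and using $u_n \le v_N$ together with $\lambda_n \ge 0$ I would bound $\sum_{n=1}^m \lambda_n u_n \le v_N \sum_{n=1}^N \lambda_n$. Setting $\Lambda_N := \sum_{n=1}^N \lambda_n$, this yields the scalar quadratic inequality
\begin{equation*}
v_N^2 - \Lambda_N v_N - S_N \le 0.
\end{equation*}
Solving this inequality (the left-hand side is a convex quadratic in $v_N$ with nonnegative leading coefficient, so $v_N$ must lie between the two roots, and in particular below the positive root) gives
\begin{equation*}
v_N \;\le\; \frac{\Lambda_N}{2} + \sqrt{\Bigl(\tfrac{\Lambda_N}{2}\Bigr)^2 + S_N}.
\end{equation*}
Since $u_N \le v_N$, this is exactly the desired bound.

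The proof is essentially a one-step trick and there is no genuine obstacle; the only subtlety worth being careful about is handling the boundary case $m=0$ and verifying that the factor of $\tfrac12$ appears correctly when completing the square (equivalently, when applying the quadratic formula $v_N \le (\Lambda_N + \sqrt{\Lambda_N^2 + 4S_N})/2$ and factoring a $4$ out of the square root).
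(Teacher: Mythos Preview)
Your proof is correct. The paper does not actually prove this lemma; it simply states it and attributes it to \cite{Schmidt2011}. Your argument---passing to the running maximum $v_N=\max_{0\le n\le N}u_n$, using monotonicity of $\{S_N\}$ and nonnegativity of $\lambda_n$ to reduce to the scalar quadratic $v_N^2-\Lambda_N v_N-S_N\le 0$, and then bounding $u_N\le v_N$ by the positive root---is precisely the standard proof given in the cited reference.
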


\begin{lemma}\label{lem:behavior}
 For $\alpha > -1$ let $s_N := \sum_{n=1}^N n^\alpha$.
 Then 
 \begin{align*}
  s_N = \Bigo{N^{1+\alpha}}.
 \end{align*}
\end{lemma}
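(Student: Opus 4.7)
The plan is to prove this by a straightforward integral comparison. Since the summand $n^\alpha$ is monotone (increasing if $\alpha \geq 0$, decreasing if $-1 < \alpha < 0$), we can sandwich the sum $s_N$ between two integrals of $x^\alpha$, and then evaluate these integrals explicitly using the fact that an antiderivative is $x^{1+\alpha}/(1+\alpha)$ (which is well-defined precisely because $\alpha > -1$).

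Concretely, I would split into two cases. If $\alpha \geq 0$, then $x \mapsto x^\alpha$ is non-decreasing, so $n^\alpha \leq \int_n^{n+1} x^\alpha \, \mathrm{d}x$ for each $n \geq 1$, giving
\begin{align*}
 s_N \;\leq\; \int_1^{N+1} x^\alpha \, \mathrm{d}x \;=\; \frac{(N+1)^{1+\alpha} - 1}{1+\alpha} \;=\; \Bigo{N^{1+\alpha}}.
\end{align*}
If $-1 < \alpha < 0$, the function is decreasing, so $n^\alpha \leq \int_{n-1}^n x^\alpha \, \mathrm{d}x$ for $n \geq 2$, which yields
\begin{align*}
 s_N \;\leq\; 1 + \int_1^N x^\alpha \, \mathrm{d}x \;=\; 1 + \frac{N^{1+\alpha} - 1}{1+\alpha} \;=\; \Bigo{N^{1+\alpha}},
\end{align*}
where we used that $1+\alpha > 0$ so the $N^{1+\alpha}$ term dominates the constant.

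There is essentially no obstacle here; the only thing one must be careful about is the direction of the inequality in the integral comparison, which depends on whether the integrand is increasing or decreasing, hence the case split. The hypothesis $\alpha > -1$ enters exactly to ensure the integral $\int_1^N x^\alpha \, \mathrm{d}x$ has the expected polynomial growth rather than being logarithmic or divergent at zero.
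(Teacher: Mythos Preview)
Your proof is correct and takes essentially the same integral-comparison approach as the paper, with the same case split on the sign of $\alpha$. The only cosmetic difference is that the paper also establishes matching lower bounds (showing $s_N$ is exactly of order $N^{1+\alpha}$), whereas you give only the upper bound---which is all the stated $O(\cdot)$ conclusion requires.
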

\begin{proof}
 Let $\alpha \in (-1,0)$ and $n \geq 1$.
 Then by the monotonicity of $x \mapsto x^\alpha$ we have for all $n-1 \leq x \leq n$ that $x^\alpha \geq n^\alpha$.
 Integrating both sides of the inequality from $n-1$ to $n$ and summing from $n=1,\dots,N$ we obtain
 \begin{align*}
  s_N \leq \int_0^N x^\alpha \dx.
 \end{align*}
 We proceed analogously for $n \leq x \leq n+1$ to obtain 
 \begin{align*}
  \int_1^{N+1} x^\alpha \dx \leq s_N.
 \end{align*}
 By computing both integrals we hence find
 \begin{align*}
  \frac{1}{1 + \alpha} \left[ (N+1)^{1+\alpha} - 1 \right] 
  = \int_1^{N+1} x^\alpha \dx 
  \leq s_N 
  \leq \int_0^N x^\alpha \dx 
  = \frac{1}{1+\alpha} N^{1+\alpha},
 \end{align*}
 which implies $s_N = O(N^{1+ \alpha})$.
 The proof for $\alpha > 0$ follows the same idea. 
 Now for every $n-1 \leq x \leq n$ we have that $(n-1)^\alpha \leq x^\alpha \leq n^\alpha$.
 Integrating the inequality from $n-1$ to $n$ and summing from $n=1, \dots, N$ we obtain 
 \begin{align*}
  s_{N-1} 
  = \sum_{n=1}^{N-1} n^\alpha 
  \leq \int_0^N x^\alpha \dx 
  = \frac{1}{1 + \alpha} N^{1+\alpha} 
  \leq \sum_{n=1}^N n^\alpha = s_N.
 \end{align*}
 Furthermore $s_{N-1} = s_N - N^\alpha$, so for every $N\geq1$ 
 \begin{align*}
  \frac{1}{1+\alpha} N^{1+\alpha} \leq s_N \leq \frac{1}{1+\alpha} N^{1 + \alpha} + N^\alpha,
 \end{align*}
 from which we deduce that $s_N = O(N^{1+ \alpha})$.
\end{proof}

\subsection{Type-0 approximations}\label{sec:type_0}
It is interesting to consider the notion of a type-0 approximation (cf. Definition \ref{def:type1}) as well, since it seems to be the most intuitive one (the authors of \cite{Aujol2015} mention it but do not explicitly handle the situation).
The problem however is that neither the inexact proximal point needs to be feasible, nor do we have an equivalent definition of a type-0 approximation in terms of an ($\e$-) subdifferential.
For simplicity we briefly outline a possible strategy on the reduced problem 
\begin{align*}
 \min_{x \in \X} \max_{y \in \Y} ~ \langle y,Kx \rangle + g(x) - h^*(y)
\end{align*}
and condsider the algorithm
 \begin{align} \label{eq:pd_general_prox_0}
  \begin{split}
   \xc &\approx_0^\varepsilon \prox_{\tau g} (\xb - \tau K^*\yt)), \\
   \yh &\approx_2^\delta \prox_{\sigma h^*} (\yb + \sigma K\xt), 
  \end{split}
\end{align}
where again $(\xc,\yc)$ are the erroneus proximal points and $(\xt,\yt)$ and $(\xb,\yb)$ are the previous points.
A possible way to deal with the type-0 approximation is to ``transfer'' the error in the primal proximum to the dual proximum.
Note that, following the same line as before, by interchanging the order of iterates (starting with the primal variable $x$) we can now perform the overrelaxation in the dual variable instead of the primal in order to get a bound on $y$.

So let $\xh$ be the true primal proximum and choose $\xt = \xc$.
Then by the definition of the type-0 approximation there exists $s \in \X$ with $\|s\| \leq \sqrt{2 \tau \e}$ such that $\xc = \xh + s$, which implies that 
\begin{align*}
 \yc 
 \approx_2 \prox_{\sigma h^*} (\yb + \sigma K\xt) 
 = \prox_{\sigma h^*} (\yb + \sigma K\xc)
 = \prox_{\sigma h^*} (\yb + \sigma (K \xh + Ks)).
\end{align*}
Hence with $d = Ks$ we can rewrite \eqref{eq:pd_general_prox_0} as 
\begin{align}\label{eq:pd_general_prox_0_rewritten}
 \begin{split}
  \xh &= \prox_{\tau g} (\xb - \tau K^*\yt ), \\ 
  \xc &= \xh + s \\ 
  \yc &\approx_2^\delta \prox_{\sigma h^*} (\yb + \sigma (K\xh + d)).
 \end{split}
\end{align}
Now 
\begin{align*}
 \|d\| 
 = \|K s\| 
 = \|K(\xh - \xc) \| \leq \|K\| \sqrt{2 \tau \varepsilon} 
 = \sqrt{2 \sigma \kappa}
\end{align*}
with $\kappa := (\tau \e \|K\|^2)/\sigma$.
This reveals that a type-0 approximation of $\xh$ with precision $\e$ can essentially be interpreted as a type-3 approximation $\yc$ of $\yh$, which in sum with the type-2 approximation gives an overall approximation of type 1 for $\yc$, now however with ``mixed'' precision $\kappa$ and $\delta$.
Using the choices 
\begin{align*}
 (\xh,\yc) = (\xh^{n+1},y^{n+1}), \qquad (\xb,\yb) = (x^n,y^n), \qquad \yt = 2y^n - y^{n-1},
\end{align*}
we formally obtain the following algorithm:
\begin{align}\label{eq:alg_prox_3}
 \begin{split}
  \xh^{n+1} &= \prox_{\tau g} (x^n - \tau K^*(2y^n - y^{n-1})), \\ 
  \yc^{n+1} &\approx_1^{\delta_{n+1},\kappa_{n+1}} \prox_{\sigma h^*} (y^n + \sigma K \xh^{n+1}). 
 \end{split}
\end{align}
This situation can then be treated similarly to the above analysis (cf. Theorem \ref{thm:convergence_no_acc}) and is summarized in Corollary \ref{cor:inex_pd_basic_y}.
The main difference here is now that we get an estimate on the true proximum $\xh^{n+1}$ while computing $x^{n+1}$ in practice. 
\begin{corollary}\label{cor:inex_pd_basic_y}
 Let $L = \|K\|$ and choose $\beta > 0$ and $\tau,\sigma > 0$ such that $\sigma \tau L^2 + \sigma \beta L< 1$ and let $(\xh^n, \yc^n)$ be defined by Algorithm \eqref{eq:alg_prox_3}. 
 Then for $\hat{X}^N := \left( \sum_{n=1}^N \xh^n \right)/N$ and $Y^N := \left( \sum_{n=1}^N y^n \right)/N$
 we have for any saddle point $(\xs, \ys) \in \X \times \Y$ that
 \begin{align*}
  \Lcal (\hat{X}^N,\ys) - \Lcal (\xs, Y^N) \leq  \frac{1}{2\sigma N} \left( \sqrt{\frac{\sigma}{\tau}} \|\xs - x^0 \| +  \|\ys - y^0 \| + 2 A_N + \sqrt{2B_N}\right)^2, 
 \end{align*}
 with $A_N = \sum_{n=1}^N \sqrt{2 \sigma \kappa_n}$ and $B_N = \sum_{n=1}^N \sigma \delta_n$.
\end{corollary}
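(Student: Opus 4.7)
The proof will mirror that of Theorem~\ref{thm:inex_pd_basic} with primal and dual roles exchanged, reflecting the symmetry of algorithm~\eqref{eq:alg_prox_3}: the overrelaxation now acts on the dual iterates via $\yt^{n+1} = 2y^n - y^{n-1}$, the primal proximum is exact, and the inaccuracy lives in the dual update as a type-1 approximation with mixed precision $(\delta_{n+1}, \kappa_{n+1})$. The first step is to derive the analogue of Lemma~\ref{lem:general_inequality}. The exact primal optimality yields $(x^n - \tau K^*\yt^{n+1} - \xh^{n+1})/\tau \in \partial g(\xh^{n+1})$ and the usual three-point estimate for $g$ without any error term. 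For the dual update, the decomposition~\eqref{eq:subdiff_decomp} applied with $\e_1 = \delta_{n+1}$ and $\e_2 = \kappa_{n+1}$ provides $r \in \Y$ with $\|r\| \leq \sqrt{2\sigma\kappa_{n+1}}$ such that $(y^n + \sigma K\xh^{n+1} - y^{n+1} - r)/\sigma \in \partial_{\delta_{n+1}} h^*(y^{n+1})$, so the corresponding three-point estimate for $h^*$ carries an additive error $\delta_{n+1}$ and, after Cauchy--Schwarz on $-\langle r/\sigma,\, y^{n+1} - y\rangle$, a multiplicative error $\sqrt{2\kappa_{n+1}/\sigma}\,\|y - y^{n+1}\|$. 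Adding both and inserting the mixed bilinear terms $\pm\langle K\xh^{n+1},y\rangle$, $\pm\langle Kx, y^{n+1}\rangle$ produces a descent rule structurally identical to~\eqref{eq:basic_inequality_no_acc} with $f = 0$, zero primal error, and dual error $\sqrt{2\kappa_{n+1}/\sigma}\,\|y - y^{n+1}\| + \delta_{n+1}$.

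The second step expands the cross term $\langle K(x - \xh^{n+1}), 2y^n - y^{n-1} - y^{n+1}\rangle$ and performs telescoping. Writing it as $-\langle K(x - \xh^{n+1}), y^{n+1} - y^n\rangle + \langle K(x - \xh^{n+1}), y^n - y^{n-1}\rangle$ and splitting $x - \xh^{n+1} = (x - \xh^n) + (\xh^n - \xh^{n+1})$ in the second piece, summation over $n = 0,\ldots,N-1$ (with the convention $y^0 = y^{-1}$) leaves one boundary contribution $\langle K(\xh^N - x), y^N - y^{N-1}\rangle$ plus a residual $\sum_n \langle K(\xh^n - \xh^{n+1}), y^n - y^{n-1}\rangle$. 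Applying Young's inequality with $\alpha = L\sigma$ to the boundary term absorbs the full coefficient of $\|y^N - y^{N-1}\|^2/(2\sigma)$ and reduces the coefficient of $\|x - \xh^N\|^2/(2\tau)$ from $1$ to $1 - \sigma\tau L^2$, which is strictly positive under the hypothesis $\sigma\tau L^2 + \sigma\beta L < 1$. The residual is handled by a second Young step with parameter $\beta$, fully exhausting the $\|\xh^n - \xh^{n+1}\|^2/(2\tau)$ coefficients while preserving a strictly negative fraction of the $\|y^n - y^{n-1}\|^2/(2\sigma)$ terms, mirroring the role that $\beta$ plays in Theorem~\ref{thm:inex_pd_basic}.

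Setting $(x,y) = (\xs,\ys)$ makes the summed primal-dual gap on the left-hand side nonnegative; dropping it leaves
\begin{align*}
\frac{\|\ys - y^N\|^2}{2\sigma} \leq \Delta_0(\xs,\ys) + \sum_{n=1}^N \sqrt{\tfrac{2\kappa_n}{\sigma}}\,\|\ys - y^n\| + \sum_{n=1}^N \delta_n,
\end{align*}
with $\Delta_0(\xs,\ys) = \|\xs - x^0\|^2/(2\tau) + \|\ys - y^0\|^2/(2\sigma)$. Multiplying through by $2\sigma$ and invoking Lemma~\ref{lem:technical_result} with $u_N = \|\ys - y^N\|$, $\lambda_n = 2\sqrt{2\sigma\kappa_n}$, and $S_N = 2\sigma\Delta_0(\xs,\ys) + 2\sigma\sum_n \delta_n$ yields the uniform bound
\begin{align*}
\|\ys - y^n\| \leq 2 A_N + \sqrt{\tfrac{\sigma}{\tau}}\|\xs - x^0\| + \|\ys - y^0\| + \sqrt{2 B_N}
\end{align*}
for every $n \leq N$, with $A_N = \sum \sqrt{2\sigma \kappa_n}$ and $B_N = \sigma \sum \delta_n$, exactly as in~\eqref{eq:no_acc_bounded_x}.

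Finally, substituting this uniform bound back into the summed descent inequality in order to control $\sum_{n=1}^N \sqrt{2\kappa_n/\sigma}\,\|\ys - y^n\|$ produces
\begin{align*}
\sum_{n=1}^N \bigl(\Lcal(\xh^n, \ys) - \Lcal(\xs, y^n)\bigr) \leq \frac{1}{2\sigma}\left(\sqrt{\tfrac{\sigma}{\tau}}\|\xs - x^0\| + \|\ys - y^0\| + 2 A_N + \sqrt{2 B_N}\right)^2,
\end{align*}
in perfect analogy with~\eqref{eq:no_acc_bound2}. Division by $N$ and Jensen's inequality applied to the convex-concave saddle function $\Lcal$ at the ergodic averages $\hat X^N$ and $Y^N$ then complete the proof. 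The main obstacle will be the careful bookkeeping in the role-swap: in particular, verifying that the Young step can absorb the endpoint cross term into the coefficient of $\|\xs - x^N\|^2$ rather than $\|\ys - y^N\|^2$, since it is the latter that must be recovered via Lemma~\ref{lem:technical_result} to close the recursion on the dual error.
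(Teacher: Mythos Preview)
Your proposal is correct and follows exactly the approach the paper takes: its own proof is the single sentence ``we can easily verify the assertion by dropping $f$ and simply interchanging the roles of $x$ and $y$ (and thus $\tau$ and $\sigma$) in Theorem~\ref{thm:inex_pd_basic}'', and you have carried out that role-swap in detail, including the correct handling of the boundary cross term and the recursion via Lemma~\ref{lem:technical_result} on $\|\ys - y^N\|$.
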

\begin{proof}
We can easily verify the assertion by dropping $f$ and simply interchanging the roles of $x$ and $y$ (and thus $\tau$ and $\sigma$) in Theorem \ref{thm:inex_pd_basic}.
\end{proof}
As for Theorem \ref{thm:inex_pd_basic} we can now state a rate for $(\hat{X}^N,Y^N)$ if the partial sums $A_N$ and$\sqrt{B_N}$ are in $o(\sqrt{N})$.
Since the result still relies on the unknown true proxima $\xh^n$, it then remains to note that for $\check{X}^N := (\sum_{i=1}^N \xc^n) / N$ we have 
\begin{align*}
 \| \hat{X}^N - \check{X}^N \| \leq \frac{1}{N} \sum_{i=1}^N \|\xh^n - \xc^n \| \leq \frac{1}{N} \sum_{i=1}^N  \sqrt{2 \sigma \kappa_n} = \frac{1}{N} A_N, 
\end{align*}
which implies strong convergence of $\check{X}^N$ to $\hat{X}^N$ with the same rate. 

Hence we can essentially handle the situation of a type-0 approximation by the same means as before. 
The major difference is still that none of the $\xc^n$ need to be feasible, which could impose problems in practice. 
Since type-0 approximations are the weakest among the introduced notions, they should technically impose the least restrictive error criteria. 
It however is an open question how to check $\| \xh - \xc \| \leq \sqrt{2\tau\e}$ effectively. 
It is easy to see that the duality gap bounds this quantity, in which situation Proposition \ref{prop:duality_gap_prox} ``unfortunately'' states that $\xc$ is already a stronger type-2 approximation.
Hence it remains to find a different criterion for the precision of a type-0 approximation to make this approach feasible in practice.

\subsection{Proof of Theorem \ref{thm:inex_pd_acc2}}\label{subsec:inex_pd_acc2}
\begin{proof}
 Using Lemma \ref{lem:general_inequality}, we proceed exactly as in the proof of Theorem \ref{thm:inex_pd_basic} (now only including the $\gamma$-strong convexity of $g$ as well as $\tau = \tau_n, \sigma = \sigma_n$ and introducing $\theta_n$), to arrive at the basic inequality   
 \begin{align*}
  &\Lcal (x^{n+1},y) - \Lcal(x,y^{n+1}) \leq  \Delta_n(x,y) - \frac{1 + \gamma \tau_n}{2\tau_n} \| x - x^{n+1} \|^2 - \frac{\|y-y^{n+1}\|^2}{2\sigma_n} \\
  &\hspace{-4pt}+ \langle K(x^{n+1} - x^n),y^{n+1} - y \rangle - \theta_n \langle K(x^n - x^{n-1}), y^{n+1} - y \rangle - \frac{1-\tau_n L_f}{2\tau_n} \|x^n - x^{n+1} \|^2 \\
  & \qquad \quad - \frac{\|y^n - y^{n+1}\|^2}{2\sigma_n} + \left( \|e^{n+1}\| + \sqrt{(2 \varepsilon_{n+1})/\tau_n} \right) \|x - x^{n+1} \| + \varepsilon_{n+1} + \delta_{n+1},
 \end{align*}
 where we let $\Delta_n(x,y) := \| x - x^n \|^2/(2\tau_n) + \| y - y^n \|^2/(2\sigma_n)$ for the sake of clarity.
 The goal of the proof is, again, to manipulate this inequality such that we obtain a recursion where most of the terms cancel when summing the inequality.
 In order to get a useful recursion in the first line it is clear that we require
 \begin{align}
  \sigma_n = \theta_{n+1} \sigma_{n+1}, \qquad (1 + \gamma \tau_n) \tau_{n+1} \theta_{n+1} \geq \tau_n \label{eq:cond_sigma_n2},
 \end{align}
 such that we obtain the estimate
 \begin{align*}
  - &\frac{1+\gamma \tau_n}{2\tau_n} \| x - x^{n+1} \|^2 - \frac{\|y-y^{n+1}\|^2}{2\sigma_n} \\ 
 &\qquad \quad = - \frac{(1+ \gamma \tau_n)\tau_{n+1}}{\tau_n} \frac{\|x-x^{n+1} \|^2}{2\tau_{n+1}} - \frac{\sigma_{n+1}}{\sigma_n} \frac{\|y - y^{n+1} \|^2 }{2 \sigma_{n+1}}
  \leq - \frac{1}{\theta_{n+1}} \Delta_{n+1} (x,y).
 \end{align*}
 For a useful recursion for the second line we expand 
 \begin{align*}
  &- \theta_n \langle K (x^n - x^{n-1}),y^{n+1} - y \rangle \\ 
  = &-\theta_n \langle K(x^n - x^{n-1}), y^{n+1} - y^n \rangle - \theta_n \langle K(x^n - x^{n-1}), y^n - y \rangle, 
 \end{align*}
 and compute (cf. Equation \eqref{eq:Young_inequality} with now $\alpha = \sigma_n \theta_n L$)
 \begin{align*}
  - \theta_n \langle K(x^n - x^{n-1}), y^{n+1} - y^n \rangle 
  &\leq  \frac{\sigma_{n-1} \theta_n L^2}{2} \| x^{n-1} - x^n \|^2 + \frac{\|y^{n+1} - y^n \|^2}{2 \sigma_n},
 \end{align*}
 where we used \eqref{eq:cond_sigma_n2} such that $\sigma_n \theta_n = \sigma_{n-1}$.
 We note that since $\tau_n L_f + \tau_n \sigma_n L^2 \leq 1 $ we furthermore have  
 \begin{align*}
  - \frac{1- \tau_n L_f}{2\tau_n} \| x^n - x^{n+1} \|^2 
  \leq - \frac{\sigma_n L^2}{2} \| x^n - x^{n+1} \|^2.
 \end{align*}
 
 Putting everything together and rearranging we arrive at (note that the terms $\|y^{n+1} - y^n \|^2 /(2 \sigma_n)$ cancel and $\sigma_{n+1}/\sigma_n = 1/\theta_{n+1}$)
 \begin{align*}
  \Lcal &(x^{n+1},y) - \Lcal(x,y^{n+1}) \\ 
  &\leq \Delta_n(x,y) - \theta_n \langle K(x^n - x^{n-1}),y^n -y \rangle + \frac{\sigma_{n-1} \theta_n L^2}{2} \| x^{n-1} - x^n \|^2 \\
  &- \frac{\sigma_{n+1}}{\sigma_n} \left( \Delta_{n+1}(x,y) - \theta_{n+1} \langle K(x^{n+1} - x^n, y^{n+1} - y \rangle + \frac{\sigma_n \theta_{n+1} L^2}{2} \| x^{n+1} - x^n \|^2 \right) \\
  &+ \left( \|e^{n+1}\| + \sqrt{(2 \varepsilon_{n+1})/\tau_n} \right) \|x - x^{n+1} \| + \varepsilon_{n+1} + \delta_{n+1}. 
 \end{align*}
 We multiply the inequality by $\sigma_n / \sigma_0$ to reveal the recursion 
 and sum from $n=0, \dots, N-1$: 
 \begin{align*}
  \sum_{n=1}^N &\frac{\sigma_{n-1}}{\sigma_0} (\Lcal (x^n,y) - \Lcal(x,y^n) ) \leq \Delta_0(x,y) \\ 
  &- \frac{\sigma_N}{\sigma_0} \left( \Delta_N(x,y) - \theta_N \langle K(x^N - x^{N-1}),y^N - y \rangle + \frac{\sigma_{N-1} \theta_N L^2}{2} \|x^N - x^{N-1} \|^2 \right) \\
  & + \frac{1}{\sigma_0} \sum_{n=1}^N \left( \sigma_{n-1} \|e^n\| + \sqrt{(2 \sigma_{n-1}^2 \varepsilon_n)\tau_{n-1}} \right) \|x - x^n\| + \frac{1}{\sigma_0} \sum_{n=1}^N \sigma_{n-1} (\varepsilon_n + \delta_n).
 \end{align*}
 Now, as above, we use that 
 \begin{align*}
  \theta_N \langle K(x^N - x^{N-1}, y^N - y \rangle 
  &\leq \frac{\sigma_{N-1} \theta_N L^2}{2} \| x^N - x^{N-1} \|^2 +  \frac{\|y - y^N \|^2}{2 \sigma_N},  
 \end{align*}
 which gives 
 \begin{align}\label{eq:intermediate_res2}
  \sum_{n=1}^N &\frac{\sigma_{n-1}}{\sigma_0} (\Lcal (x^n,y) - \Lcal(x,y^n) ) + \frac{\sigma_N}{\sigma_0} \frac{1}{2\tau_N} \|x - x^N \|^2 \leq \Delta_0(x,y)  \nonumber \\
  &+ \frac{1}{\sigma_0} \sum_{n=1}^N \left( \sigma_{n-1} \|e^n\| + \sqrt{(2 \sigma_{n-1}^2 \varepsilon_n)/\tau_{n-1}} \right) \|x - x^n\| + \frac{1}{\sigma_0} \sum_{n=1}^N \sigma_{n-1} (\varepsilon_n + \delta_n). 
 \end{align}
 This equation can now be use as before to bound all terms on the left hand side.
 Again for a saddle point $(\xs,\ys) \in \X \times \Y$ the sum is nonnegative, hence we obtain the inequality:
 \begin{align*}
  \| &\xs - x^N \|^2 
  \leq \frac{\tau_N}{\sigma_N} \frac{\sigma_0}{\tau_0} \| \xs - x^0 \|^2 + \frac{\tau_N}{\sigma_N} \| \ys - y^0 \|^2 \\
  &+ 2 \frac{\tau_N}{\sigma_N}  \sum_{n=1}^N \left( \sigma_{n-1} \|e^n\| + \sqrt{(2 \sigma_{n-1}^2 \varepsilon_n)/\tau_{n-1}} \right) \|x - x^n\| + 2 \frac{\tau_N}{\sigma_N}  \sum_{n=1}^N \sigma_{n-1} (\varepsilon_n + \delta_n).
 \end{align*}
 For the sake of readability let us denote 
 \begin{align*}
  \eta_N = \frac{\tau_N}{\sigma_N}, ~ A_N = \sum_{n=1}^N \left( \sigma_{n-1} \|e^n\| + \sqrt{(2 \sigma_{n-1}^2 \varepsilon_n)/\tau_{n-1}} \right), ~ B_N = \sum_{n=1}^N \sigma_{n-1} (\varepsilon_n + \delta_n).
 \end{align*}
 Then as before with Lemma \ref{lem:technical_result} we find, 
 \begin{align*}
  \|\xs - x^N \| 
  \leq \eta_N A_N + \left( \eta_N \frac{\sigma_0}{\tau_0} \| \xs - x^0 \|^2 + \eta_N \| \ys - y^0 \|^2 + 2 \eta_N B_N + \eta_N^2 A_N^2 \right)^{\frac{1}{2}}.
 \end{align*}
 Since $A_N$ and $B_N$ are increasing we have for all $n \leq N$
 \begin{align*}
  \|\xs - x^n \| 
  &\leq \eta_n A_n + \left( \eta_n \frac{\sigma_0}{\tau_0} \| \xs - x^0 \|^2 + \eta_n \| \ys - y^0 \|^2 + 2 \eta_n B_n + \eta_n^2 A_n^2 \right)^{\frac{1}{2}} \\
  &\leq \eta_N A_N + \left( \eta_N \frac{\sigma_0}{\tau_0} \| \xs - x^0 \|^2 + \eta_N \| \ys - y^0 \|^2 + 2 \eta_N B_N + \eta_N^2 A_N^2 \right)^{\frac{1}{2}} \\
  &\leq 2 \eta_N A_N + \sqrt{\eta_N} \sqrt{\frac{\sigma_0}{\tau_0}} \| \xs - x^0 \| + \sqrt{\eta_N} \| \ys - y^0 \| + \sqrt{\eta_N} \sqrt{2 B_N}.
 \end{align*}
 Now evoking equation \eqref{eq:intermediate_res2} we obtain
 \begin{align*}
  \sum_{n=1}^N &\frac{\sigma_{n-1}}{\sigma_0} (\Lcal (x^n,y) - \Lcal(x,y^n) ) \\
  &\leq \frac{1}{2\tau_0} \| \xs - x^0 \|^2 + \frac{1}{2\sigma_0} \| \ys - y^0 \|^2 + \frac{1}{\sigma_0} B_N  \\ 
  &\hspace{0.6cm}  + \frac{1}{\sigma_0}A_N \left( 2 \eta_N A_N + \sqrt{\eta_N} \sqrt{\frac{\sigma_0}{\tau_0}} \| \xs - x^0 \| + \sqrt{\eta_N} \| \ys - y^0 \| + \sqrt{\eta_N} \sqrt{2 B_N} \right) \\
  &\leq \frac{1}{2\sigma_0} \left( \frac{\sigma_0}{\tau_0} \| \xs - x^0 \|^2 + \| \ys - y^0 \|^2 + B_N + 4 \eta_N A_N^2 \right. \\
  & \qquad \qquad + \left.2 \sqrt{\eta_N} A_N \sqrt{\frac{\sigma_0}{\tau_0}} \| \xs - x^0 \| + 2 A_N \sqrt{\eta_N} \| \ys - y^0 \| + 2 A_N \sqrt{\eta_N} \sqrt{2 B_N} \right) \\ 
  &\leq \frac{1}{2\sigma_0} \left( \sqrt{\frac{\sigma_0}{\tau_0}} \| \xs - x^0 \| + \| \ys - y^0 \| + 2 \sqrt{\eta_N} A_N + \sqrt{2 B_N} \right)^2
 \end{align*}
 The convexity of $(\xi, \zeta) \mapsto \Lcal(\xi, \ys) - \Lcal(\xs, \zeta)$ and the definition of the ergodic averages yields the assertion (cf. the proof of Theorem \ref{thm:inex_pd_acc}).
 The estimate on $\|\xs - x^N \|^2$ follows analogously.
 It remains to note that for a type-2 approximation the square root in $A_N$ can be dropped and for a type-3 approximation $B_N =0$, which gives the different $A_{N,i}, B_{N,i}$.
\end{proof}

\subsection{Proof of Theorem \ref{thm:inex_pd_acc}}
\label{subsec:inex_pd_acc}
\begin{proof}
 We proceed exactly as in the proof of Theorem \ref{thm:inex_pd_acc2} with interchanged roles of $x,y,\tau_n$ and $\sigma_n$ to arrive at the basic inequality   
 \begin{align*}
  \Lcal &(x^{n+1},y) - \Lcal(x,y^{n+1}) \leq \Delta_n(x,y) - \frac{1}{2\tau_n} \| x - x^{n+1} \|^2 - \frac{1 + \gamma \sigma_n}{2\sigma_n} \|y-y^{n+1}\|^2 \\
  &+ \langle K(x^{n+1} - x^n),y^{n+1} - y \rangle - \theta_n \langle K(x^n - x^{n-1}), y^{n+1} - y \rangle - \frac{1}{2\tau_n} \|x^n - x^{n+1} \|^2  \\
  &- \frac{1}{2\sigma_n} \|y^n - y^{n+1}\|^2 + \left( \|e^{n+1}\| + \sqrt{\frac{2 \varepsilon_{n+1}}{\tau_n}} \right) \|x - x^{n+1} \| + \e_{n+1} + \delta_{n+1},
 \end{align*}
 where we again let $\Delta_n(x,y) := \| x - x^n \|^2/(2\tau_n) + \| y - y^n \|^2/(2\sigma_n)$.
 In order to get a useful recursion for the first two lines it is clear that we need to require 
 \begin{align*}
  \tau_n = \theta_{n+1} \tau_{n+1}, \\ 
  (1 + \gamma \sigma_n) \sigma_{n+1} \theta_{n+1} \geq \sigma_n,
 \end{align*}
 such that the second line becomes
 \begin{align*}
  & - \frac{1}{2\tau_n} \| x - x^{n+1} \|^2 - \frac{1 + \gamma \sigma_n}{2\sigma_n} \|y-y^{n+1}\|^2 \\
  = &- \frac{\tau_{n+1}}{\tau_n} \frac{\|x-x^{n+1} \|^2}{2\tau_{n+1}} - \frac{(1+\gamma \sigma_n) \sigma_{n+1}}{\sigma_n} \frac{\|y - y^{n+1} \|^2 }{2 \sigma_{n+1}} \\
  \leq &- \frac{1}{\theta_{n+1}} \Delta_{n+1} (x,y).
 \end{align*}
 For a useful recursion for the third line we expand 
 \begin{align*}
  &- \theta_n \langle K (x^n - x^{n-1}),y^{n+1} - y \rangle \\
  = &-\theta_n \langle K(x^n - x^{n-1}), y^{n+1} - y^n \rangle - \theta_n \langle K(x^n - x^{n-1}), y^n - y \rangle, 
 \end{align*}
 and compute with Young's inequality (cf. Equation \eqref{eq:Young_inequality} with $\alpha = 1 / (\tau_n \theta_n L)$)
 \begin{align*}
  - \theta_n \langle K(x^n - x^{n-1}), y^{n+1} - y^n \rangle 
  \leq \frac{1}{2\tau_n} \| x^{n-1} - x^n \|^2 + ( \tau_n \sigma_n \theta_n^2 L^2 ) \frac{\|y^{n+1} - y^n \|^2}{2 \sigma_n}.
 \end{align*}
 In order to obtain a recursion for the first term on the right hand side 
 we note that 
 \begin{align*}
  - \frac{1}{2\tau_n} \| x^n - x^{n+1} \|^2 
  = - \frac{\tau_{n+1}}{\tau_n} \frac{1}{2\tau_{n+1}} \| x^n - x^{n+1} \|^2
 \end{align*}
 in the fourth line.
 Putting everything together and rearranging we arrive at 
 \begin{align*}
  \Lcal &(x^{n+1},y) - \Lcal(x,y^{n+1}) 
  \leq \Delta_n(x,y) - \theta_n \langle K(x^n - x^{n-1}),y^n -y \rangle + \frac{1}{2\tau_n} \| x^n - x^{n-1} \|^2 \\
  &- \frac{\tau_{n+1}}{\tau_n} \left( \Delta_{n+1}(x,y) - \theta_{n+1} \langle K(x^{n+1} - x^n), y^{n+1} - y \rangle + \frac{1}{2 \tau_{n+1}} \| x^{n+1} - x^n \|^2 \right) \\
  &- (1 - \tau_n \sigma_n \theta_n^2 L^2) \frac{\|y^{n+1} - y^n \|^2}{2\sigma_n} + \sqrt{\frac{2 \varepsilon_{n+1}}{\tau_n}} \|x - x^{n+1} \| + \e_{n+1} + \delta_{n+1}. 
 \end{align*}
 Requiring that $\tau_n \sigma_n \theta_n^2 L^2 \leq 1$ we can discard the related term and multiply the inequality by $\tau_n / \tau_0$ to reveal the recursion:
  \begin{align*}
  \frac{\tau_n}{\tau_0} \Lcal (&x^{n+1},y) - \Lcal(x,y^{n+1}) \\
  \leq &\frac{\tau_n}{\tau_0} \left(  \Delta_n(x,y) - \theta_n \langle K(x^n - x^{n-1}),y^n -y \rangle + \frac{1}{2\tau_n} \| x^n - x^{n-1} \|^2 \right) \\
  - &\frac{\tau_{n+1}}{\tau_0} \left( \Delta_{n+1}(x,y) - \theta_{n+1} \langle K(x^{n+1} - x^n), y^{n+1} - y \rangle + \frac{1}{2 \tau_{n+1}} \| x^{n+1} - x^n \|^2 \right) \\
  + &\frac{1}{\tau_0} \sqrt{2 \tau_n \varepsilon_{n+1}} \|x - x^{n+1} \| + \frac{\tau_n}{\tau_0} (\e_{n+1} + \delta_{n+1}). 
 \end{align*}
 We now sum the above inequality from $n=0, \dots, N-1$: 
 \begin{align*}
  &\sum_{n=1}^N \frac{\tau_{n-1}}{\tau_0} (\Lcal (x^n,y) - \Lcal(x,y^n) ) \\
  &\leq \Delta_0(x,y) - \frac{\tau_N}{\tau_0} \left( \Delta_N(x,y) - \theta_N \langle K(x^N - x^{N-1}),y^N - y \rangle + \frac{1}{2\tau_N} \|x^N - x^{N-1} \|^2 \right) \\
  &+ \frac{1}{\tau_0} \sum_{n=1}^N \sqrt{2 \tau_{n-1} \varepsilon_n} \|x - x^n\| + \frac{1}{\tau_0} \sum_{n=1}^N \tau_{n-1} (\e_n + \delta_n).
 \end{align*}
 Now, as above, we use that 
 \begin{align*}
  \theta_N \langle K(x^N - x^{N-1}, y^N - y \rangle 
  \leq ~\frac{1}{2\tau_N} \| x^N - x^{N-1} \|^2 + (\sigma_N \tau_N \theta_N^2 L^2) \frac{\|y - y^N \|^2}{2 \sigma_N},  
 \end{align*}
 which gives the first intermediate result: 
 \begin{align}\label{eq:intermediate_res}
  \sum_{n=1}^N \frac{\tau_{n-1}}{\tau_0} (\Lcal (x^n,y) &- \Lcal(x,y^n) ) + \frac{1}{2\tau_0} \|x - x^N \|^2 + \frac{\tau_N}{\tau_0} (1- \sigma_N \tau_N \theta_N^2 L^2) \frac{\|y - y^N \|^2}{2\sigma_N} \nonumber \\
  &\leq \Delta_0(x,y) + \frac{1}{\tau_0} \sum_{n=1}^N \sqrt{2 \tau_{n-1} \varepsilon_n} \|x - x^n\| + \frac{1}{\tau_0} \sum_{n=1}^N \tau_{n-1} (\e_n + \delta_n). 
 \end{align}
 This equation can now be use as before to bound all terms on the left hand side and hence gives the necessary bound on $\|x - x^N\|$ appearing in the error term.
 For a saddle point $(\xs,\ys) \in \X \times \Y$ the sum on the left hand side is nonnegative and:
 \begin{align*}
  \| \xs - x^N \|^2 \leq 2\tau_0 \Delta_0(x,y) + 2\sum_{n=1}^N  \sqrt{2 \tau_{n-1} \varepsilon_n} \|\xs - x^n\| + 2 \sum_{n=1}^N \tau_{n-1} (\e_n + \delta_n).
 \end{align*}
 Hence, again with Lemma \ref{lem:technical_result}, 
 \begin{align*}
  \|\xs - x^N \| &\leq \sum_{n=1}^N \sqrt{2 \tau_{n-1} \varepsilon_n} + \left( \| \xs - x^0 \|^2 + \frac{\tau_0}{\sigma_0} \|\ys-y^0 \|^2 \right. \\ 
  &+ \left.2 \sum_{n=1}^N \tau_{n-1} ( \varepsilon_n + \delta_n )+ \left( \sum_{n=1}^N \sqrt{2 \tau_{n-1} \varepsilon_n}\right)^2 \right)^{\frac{1}{2}} \\
  &= A_N + \left( \| \xs - x^0 \|^2 + \frac{\tau_0}{\sigma_0} \| \ys - y^0 \|^2 + 2 B_N + A_N^2 \right)^{\frac{1}{2}}, 
 \end{align*}
 where we denote $A_N = \sum_{n=1}^N \sqrt{2 \tau_{n-1} \varepsilon_n}$ and $B_N = \sum_{n=1}^N \tau_{n-1} (\e_n + \delta_n)$.
 %
 Since $A_N$ and $B_N$ are increasing we have for all $n \leq N$
 \begin{align*}
  \|\xs - x^n \| &\leq A_n + \left( \| \xs - x^0 \|^2 + \frac{\tau_0}{\sigma_0} \| \ys - y^0 \|^2 + 2 B_n + A_n^2 \right)^{\frac{1}{2}} \\ 
  &\leq A_N + \left( \| \xs - x^0 \|^2 + \frac{\tau_0}{\sigma_0} \| \ys - y^0 \|^2 + 2 B_N + A_N^2 \right)^{\frac{1}{2}} \\
  &\leq 2 A_N + \| \xs - x^0 \| + \sqrt{\frac{\tau_0}{\sigma_0}} \| \ys - y^0 \| + \sqrt{2 B_N}.
 \end{align*}
 Then we find (again by equation \eqref{eq:intermediate_res})
 \begin{align*}
  &\sum_{n=1}^N \frac{\tau_{n-1}}{\tau_0} (\Lcal (x^n,\ys) - \Lcal(\xs,y^n) ) \\
  &\leq \Delta_0(\xs,\ys) + \frac{1}{\tau_0}B_N + \frac{1}{\tau_0} A_N \left( 2 A_N + \| \xs - x^0 \| + \frac{\tau_0}{\sigma_0} \| \ys - y^0 \| + \sqrt{2 B_N} \right) \\
  &= \frac{1}{2\tau_0} \left( \|\xs - x^0 \|^2 + \frac{\tau_0}{\sigma_0} \| \ys - y^0 \|^2 + 2B_N + 4 A_N^2 \right. \\ 
  &\left. \hspace{3.3cm} + 2A_N \|\xs - x^0 \| + 2 A_N \sqrt{\frac{\tau_0}{\sigma_0}} \| \ys - y^0 \| + 2 A_N \sqrt{2 B_N)}  \right)\\ 
  &\leq \frac{1}{2\tau_0} \left( \|\xs - x^0 \| + \sqrt{\frac{\tau_0}{\sigma_0}} \| \ys - y^0 \| + 2A_N + \sqrt{2 B_N} \right)^2.
 \end{align*}
 Using the convexity of $(\xi, \zeta) \mapsto \Lcal(\xi, \ys) - \Lcal(\xs, \zeta)$ and Jensen's inequality as well as the definition of the ergodic averages $(X^N,Y^N)$ yields 
 the first assertion.
 The estimate on $\|\xs - x^N\|^2$ and $\| \ys - y^N \|^2$ then follows analogously from inequality \eqref{eq:intermediate_res}.
 It remains to note that for a type-2 approximation the square root in $A_N$ can be dropped and for a type-3 approximation $B_N =0$, which gives the different $A_{N,i}, B_{N,i}$.
\end{proof}

\subsection{Proof of Theorem \ref{thm:linear_case}}
\label{subsec:smooth}
\begin{proof}
We again start with the general descent rule in Lemma \ref{lem:general_inequality}:
\begin{align*}
  \Lcal &(x^{n+1},y) - \Lcal(x,y^{n+1}) \leq  \frac{1}{2\tau} \| x - x^n \|^2 + \frac{1}{2\sigma} \|y-y^n\|^2 - \frac{1+\gamma \tau}{2\tau} \| x - x^{n+1} \|^2 \\
  & - \frac{1 + \mu \sigma}{2\sigma} \|y-y^{n+1}\|^2 + \langle K(x^{n+1} - x^n),y^{n+1} - y \rangle - \theta \langle K(x^n - x^{n-1}), y^{n+1} - y \rangle \\
  &\qquad - \frac{1 - \tau L_f}{2\tau} \|x^n - x^{n+1} \|^2 - \frac{1}{2\sigma} \|y^n - y^{n+1}\|^2 \\
  &\qquad \qquad  + \left( \|e^{n+1}\| + \sqrt{(2 \varepsilon_{n+1})/\tau} \right) \|x - x^{n+1} \| + \varepsilon_{n+1} + \delta_{n+1}.
 \end{align*}
Now we expand and apply Young's inequality 
\begin{align*}
 &- \theta \langle K(x^n - x^{n-1}),y^{n+1} -y \rangle \\
 = &- \theta \langle K(x^n - x^{n-1}),y^{n+1} - y^n \rangle - \theta \langle K(x^n - x^{n-1}),y^n -y \rangle \\
 \leq &\frac{\sigma \theta^2 L^2}{2} \| x^{n-1} - x^n \|^2 + \frac{\|y^{n+1} - y^n\|^2}{2\sigma} - \theta \langle K(x^n - x^{n-1}),y^n -y \rangle,
\end{align*}
which gives 
\begin{align*}
  \Lcal &(x^{n+1},y) - \Lcal(x,y^{n+1}) \leq  \frac{1}{2\tau} \| x - x^n \|^2 + \frac{1}{2\sigma} \|y-y^n\|^2 - \frac{1+\gamma \tau}{2\tau} \| x - x^{n+1} \|^2  \\
  & - \frac{1 + \mu \sigma}{2\sigma} \|y-y^{n+1}\|^2 + \langle K(x^{n+1} - x^n),y^{n+1} - y \rangle - \theta \langle K(x^n - x^{n-1}), y^n - y \rangle \\
  &\qquad - \frac{1 - \tau L_f}{2\tau} \|x^n - x^{n+1} \|^2 + \frac{\sigma \theta^2 L^2}{2} \|x^{n-1} - x^n \|^2 \\
  &\qquad \qquad + \left( \|e^{n+1}\| + \sqrt{\frac{2 \varepsilon_{n+1}}{\tau}} \right) \|x - x^{n+1} \| + \varepsilon_{n+1} + \delta_{n+1}.
 \end{align*}
Ensuring that $1+\gamma \tau = 1+\mu \sigma = 1/\theta$ and $(1-\tau L_f)/\tau \geq \sigma \theta^2 L^2$
we derive
\begin{align*}
 \Lcal(x^{n+1},y) - \Lcal(x,y^{n+1}) 
 &\leq \Delta_n(x,y) - \theta \langle K(x^n - x^{n-1}),y^n-y\rangle \\
 &-\frac{1}{\theta} \Big( \Delta_{n+1}(x,y) - \theta \langle K(x^{n+1} - x^n),y^{n+1} -y \rangle \Big) \\
 &+ \frac{\sigma \theta^2 L^2}{2} \| x^{n-1} -x^n \|^2 - \frac{1-\tau L_f}{2\tau} \| x^n - x^{n+1} \|^2 \\
 &+ \left( \|e^{n+1}\| + \sqrt{\frac{2 \varepsilon_{n+1}}{\tau}} \right) \|x - x^{n+1} \| + \varepsilon_{n+1} + \delta_{n+1}.
\end{align*}
We now multiply by $\theta^{-n}$ and sum from $n = 0, \dots, N-1$:
\begin{align*}
 \sum_{n=1}^N \frac{1}{\theta^{n-1}} &\Big( \Lcal(x^n,y) - \Lcal(x,y^n) \Big) \\
 \leq &\Delta_0(x,y) - \frac{1}{\theta^N} \Big( \Delta_N(x,y) - \theta \langle K(x^N - x^{N-1}),y^N - y\rangle \Big) \\
 &\quad- \sum_{n=1}^N \frac{1-\tau L_f - \tau \sigma \theta^2 L^2}{2\tau\theta^{n-1}} \|x^{n-1} - x^n \|^2 + \frac{1-\tau L_f}{2\tau \theta^{N-1}} \| x^N - x^{N-1} \|^2 \\
 &\qquad \quad+ \sum_{n=1}^N \frac{1}{\theta^{n-1}} \left[ \left( \|e^n\| + \sqrt{\frac{2 \varepsilon_n}{\tau}} \right) \|x - x^n \| + \varepsilon_n + \delta_n \right], 
\end{align*}
which again by Young's inequality implies 
\begin{align}\label{eq:main_ineq_linear}
 \sum_{n=1}^N \frac{1}{\theta^{n-1}} &\Big( \Lcal(x^n,y) - \Lcal(x,y^n) \Big) + \frac{1}{2\tau \theta^N} \|x-x^N\|^2 \nonumber \\ 
 &\leq \Delta_0(x,y) + \sum_{n=1}^N \frac{1}{\theta^{n-1}} \left[ \left( \|e^{n}\| + \sqrt{\frac{2 \varepsilon_{n}}{\tau}} \right) \|x - x^n \| + \varepsilon_{n} + \delta_{n} \right].
\end{align}
For a saddle point $(\xs,\ys)$, the sum on the left hand side is positive, hence we obtain
\begin{align*}
 \|x-x^N\|^2 &\leq \theta^N \|\xs-x^0\|^2 + \theta^N \frac{\tau}{\sigma} \|\ys-y^0\|^2 \\ 
 &+ 2 \theta^N \sum_{n=1}^N \frac{1}{\theta^{n-1}}(\tau \|e^n\| + \sqrt{2\tau \e_n}) \|\xs - x^n \| + 2 \theta^N \sum_{n=1}^N \frac{\tau}{\theta^{n-1}} (\e_n + \delta_n).
\end{align*}
Evoking Lemma \ref{lem:technical_result} and denoting 
\begin{align*}
 A_N := \sum_{n=1}^N \frac{1}{\theta^{n-1}} (\tau \|e^n\| + \sqrt{2\tau \e_n}), \qquad B_N := \sum_{n=1}^N \frac{\tau}{\theta^{n-1}} (\e_n + \delta_n), 
\end{align*}
we obtain 
\begin{align*}
 \|\xs - x^N \| \leq \theta^N A_N + \left[ \theta^N \|\xs - x^0 \|^2 + \theta^N \frac{\tau}{\sigma} \| \ys - y^0 \|^2 + 2 \theta^N B_N + \theta^{2N} A_N^2 \right]^{\frac{1}{2}}.
\end{align*}
By monotonicity we have the same bound for all $n \leq N$: 
\begin{align*}
 \|\xs - x^n \| &\leq \theta^N A_N + \left[ \theta^N \|\xs - x^0 \|^2 + \theta^N \frac{\tau}{\sigma} \| \ys - y^0 \|^2 + 2 \theta^N B_N + \theta^{2N} A_N^2 \right]^{\frac{1}{2}} \\
 &\leq 2 \theta^N A_N + \theta^{\frac{N}{2}} \|\xs-x^0\| + \theta^{\frac{N}{2}} \sqrt{\frac{\tau}{\sigma}} \|\ys - y^0 \| + \theta^{\frac{N}{2}} \sqrt{B_N}.
\end{align*}
We now again use inequality \eqref{eq:main_ineq_linear} to obtain a bound for the sum: 
\begin{align*}
 \sum_{n=1}^N \frac{1}{\theta^{n-1}} &( \Lcal(x^n,\ys) - \Lcal(\xs,y^n) ) 
 \leq \frac{1}{2\tau} \left[ \|\xs-x^0\|^2 + \frac{\tau}{\sigma} \|\ys -y^0\|^2 + 2 B_N \right. \\
 &\left. + 2 A_N \left( 2 \theta^N A_N + \theta^{\frac{N}{2}} \|\xs-x^0\| + \theta^{\frac{N}{2}} \sqrt{\frac{\tau}{\sigma}} \|\ys-y^0\| + \theta^{\frac{N}{2}}\sqrt{2 B_N} \right) \right] \\  
 & \leq \frac{1}{2\tau} \left( \|\xs -x^0\| + \sqrt{\frac{\tau}{\sigma}} \|\ys-y^0\| + 2 \theta^{\frac{N}{2}} A_N + \sqrt{2 B_N}  \right)^2.
\end{align*}
Eventually we let 
\begin{align*}
 T_N := \sum_{n=1}^N \frac{1}{\theta^{n-1}} = \frac{\theta^N-1}{\theta-1} \frac{1}{\theta^{N-1}}, \quad X^N := \frac{1}{T_N} \sum_{n=1}^N \frac{1}{\theta^{n-1}} x^n, \quad Y^N := \frac{1}{T_N} \sum_{n=1}^N \frac{1}{\theta^{n-1}} y^n
\end{align*}
to deduce the assertion by convexity and Jensen's inequality.
By the same argumentation as above we can also use inequality \eqref{eq:main_ineq_linear} to obtain the convergence of the iterates: 
\begin{align*}
 \frac{\|\xs-x^0\|^2}{2\tau} \leq  \frac{\theta^N}{2\tau} \left( \|\xs -x^0\| + \sqrt{\frac{\tau}{\sigma}} \|\ys-y^0\| + 2 \theta^{\frac{N}{2}} A_N + \sqrt{2 B_N}  \right)^2
\end{align*}

\end{proof}

%
%
\bibliographystyle{plain}
\bibliography{ref_inex_pd}{}

\end{document}